\newtheorem{theorem}{Theorem} 	      	      	                              
\newtheorem{lemma}[theorem]{Lemma}     	       	      	      	      	      
\newtheorem{proposition}[theorem]{Proposition} 	      	      	      	      
\newtheorem*{question}{Question}                                              
\theoremstyle{definition}
\newtheorem{definition}[theorem]{Definition} 	      	      	                
\theoremstyle{remark}
\newtheorem*{remark}{Remark}                                                  
\newtheorem{rmk}{Remark}[theorem]
\numberwithin{equation}{section}                                              
\numberwithin{theorem}{section}                                               
\newcommand{\ol}[1]{\overline{#1}}                                            
\newcommand{\mc}[1]{\mathcal{#1}}                                             
\newcommand{\R}{\mathbb{R}}                                                   
\newcommand{\Sph}{\mathbb{S}}                                                 
\newcommand{\sGamma}{{\Gamma^\sharp}}
\newcommand{\snabla}{{\nabla^\sharp}}
\newcommand{\ud}{\mathrm{d}}
\newcommand{\rs}{r^\ast}
\newcommand{\g}{\mathring{\gamma}}
\newcommand{\gb}{\overline{g}}
\newcommand{\h}[2]{\bigl(\nabla^2 #1\bigr)_{#2}}
\newcommand{\Hb}[1]{\overline{\nabla}^2_{#1}}
\newcommand{\Nb}{\bar{N}}
\newcommand{\Tb}{\bar{T}}
\newcommand{\beq}{\begin{equation}}
\newcommand{\eeq}{\end{equation}}
\newcommand{\e}{\epsilon}
\newcommand{\sg}{{g^\sharp}}
\newcommand{\vv}{\tilde{v}}
\newcommand{\uu}{\tilde{u}}
\newcommand{\gi}{g}
\newcommand{\nablai}{\nabla}
\DeclareMathOperator{\gradi}{grad}                                            
\newcommand{\Boxi}{\Box}
\newcommand{\smooth}[1]{\mc{C}^\infty (#1)}                                   
\newcommand{\Ei}[1]{E_{#1}}                                                   
\begin{document}


\title[Unique continuation from infinity for linear waves]{Unique continuation from infinity\\ for linear waves}

\author{Spyros Alexakis}
\address{Department of Mathematics\\ University of Toronto\\ 40 St George Street Rm 6290\\ Toronto ON M5S 2E4\\ Canada}
\email{alexakis@math.utoronto.ca}

\author{Volker Schlue}
\address{Department of Mathematics\\ University of Toronto\\ 40 St George Street Rm 6290\\ Toronto ON M5S 2E4\\ Canada}
\curraddr{Mathematical Sciences Research Institute\\ 17 Gauss Way\\ Berkeley CA 94720\\ United States}
\email{vschlue@math.utoronto.ca}

\author{Arick Shao}
\address{Department of Mathematics\\ University of Toronto\\ 40 St George Street Rm 6290\\ Toronto ON M5S 2E4\\ Canada}
\email{ashao@math.utoronto.ca}


\begin{abstract}
We prove various uniqueness results from null infinity, for linear waves on asymptotically flat space-times.
Assuming vanishing of the solution to infinite order on suitable parts of future and past null infinities, we derive that the solution must vanish in an open set in the interior.
We find that the parts of infinity where we must impose a vanishing condition depend strongly on the background geometry. In particular, for backgrounds with positive mass (such as Schwarzschild or Kerr), the required assumptions are much weaker than the ones in the Minkowski space-time.
The results are nearly optimal in many respects.
They can be considered analogues of uniqueness from infinity results for second order elliptic operators.
This work is partly motivated by questions in general relativity. 
\end{abstract}

\maketitle

\tableofcontents

\section{Introduction} \label{sec:intro}

We prove unique continuation results from infinity for wave equations over asymptotically flat backgrounds.
In particular, consider solutions $\phi$ of a linear wave equation
\begin{equation} \label{wave} L_g \phi := \Box_g \phi + a^\alpha \partial_\alpha \phi + V \phi = 0 \end{equation}
over a Lorentzian manifold $(M, g)$, with $\Box_g$ being the Laplace-Beltrami operator for $g$.
We show that if the solution vanishes (in a suitable sense) at parts of future and past null infinities 
$\mc{I}^+, \mc{I}^- \subset \partial M$, then the solution $\phi$ must vanish on an open domain inside $(M, g)$.

\smallskip

One motivation for this paper comes from older and newer studies in general relativity 
regarding the possibility of periodic-in-time solutions of the Einstein equations.
This has been considered both for vacuum space-times, \cite{papapetrou:1957, papapetrou:1958:1, papapetrou:1958:2}, 
and for gravity coupled with matter fields, \cite{bic_scho_tod:time_per_sc:1, bic_scho_tod:time_per_sc:2}.
In most settings, the problem reduces to whether solutions of the Einstein equations (vacuum or with matter) 
which emit no radiation towards the null infinities $\mc{I}^+,\mc{I}^-$ must be stationary.
In view of the techniques developed in \cite{alex_io_kl:hawking_anal}, it seems that a positive 
answer to the uniqueness question for linear waves should be applicable towards the above problem.
We intend to return to this  in a subsequent paper; see also Section~\ref{sec:intro:relativity}.

The present paper, however, is primarily inspired by the challenge of deriving analogues of unique continuation from infinity and from a point, which have been studied for second order elliptic operators, \cite{aron:uc, cord:uc, hor:lpdo2, hor:lpdo3, kato:growth, reed-sim:scat, mesh:inf_decay, mesh:diff_ineq}, to the setting of wave equations.
We begin with a brief review of this subject; excellent broader reviews can be found in \cite{kenig:ams, tataru:ucbig, tataru:ucp} and references therein.

\subsection{Unique Continuation and Pseudoconvexity}

\emph{Unique continuation} is essentially a question of uniqueness of solutions to PDEs.
Consider a smooth function $h$ defined over a domain $\mc{D} \subset \R^m$, with $\ud h \neq 0$ everywhere in $\mc{D}$.
Let
\[ \Sigma := \{h = 0\} \text{,} \qquad \Sigma^+ := \{h \geq 0\} \text{,} \qquad \Sigma^- = \{h \leq 0\} \text{.} \]
The uniqueness problem is the following:
\footnote{For brevity, we present the discussion here for scalar equations.}

\begin{question}
Assume that:
\begin{itemize}
\item $\phi \in \mc{C}^2(\mc{D})$ solves the second-order PDE $p (x, D) \phi = 0$ in $\mc{D}$, and

\item $\phi = 0$ and $\ud \phi = 0$ on $\Sigma \cap B (P, \delta)$, where $P \in \Sigma$, and where $B (P, \delta)$ denotes the ball in $\R^m$ about $P$ of radius $\delta > 0$.
\end{itemize}
Is it true that $\phi = 0$ in $\Sigma^+ \cap B (P, \delta^\prime)$, for some (possibly smaller) $\delta^\prime > 0$?
\end{question}
 
\subsubsection{Second-order Hyperbolic Equations} 

When $p(x, D)$ is a wave operator, with principal part of the form $\Box_g$, this question is of particular interest
 when $\Sigma$ is non-characteristic and time-like (with respect to $g$, thought of as a Lorentzian metric). 
In that case, the Cauchy problem is ill-posed, \cite{PC:non-uniq}, yet uniqueness may sometimes hold. 
The key condition that ensures uniqueness is H\"ormander's strong pseudo-convexity condition, which in this setting takes the form \cite{ler_robb:unique}:
\begin{equation} \label{pseudo.cvx} D^2 h (X, X) < 0 \text{,} \qquad \text{if } g (X, X) = g (X, Dh) = 0 \text{.} \end{equation}
In particular, the pseudo-convexity depends only on the principal symbol of $p (x, D)$ and its relation to the hypersurface $\Sigma$.
In this setting, one has the following result:
 
\begin{theorem} \label{uniq.H}\cite{hor:lpdo4}
Assume $L_g := \Box_g + a^\alpha \partial_\alpha + V$ is a wave operator with smooth coefficients, and suppose $\psi \in \mc{C}^2 (\mc{D})$ solves $L_g \phi = 0$ in $\mc{D}$.
Assume furthermore that $\phi=0, \ud \phi \ne  0$ on $\Sigma$ around $P$.
Then, if $\Sigma$ is strongly pseudo-convex at $P$ in the sense of \eqref{pseudo.cvx}, then $\phi$ vanishes in a (relatively open) domain in $\Sigma^+$ near $P$.
\end{theorem}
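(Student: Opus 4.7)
The plan is to prove this by Carleman estimates, following the classical scheme of Carleman and H\"ormander. The essential point is to upgrade the pointwise pseudo-convexity of $\Sigma$ at $P$ into a global positivity statement for a suitably conjugated version of $L_g$. As a first step, I would construct a weight function $\varphi$, defined near $P$, whose level sets are small convex perturbations of those of $h$: a standard choice is $\varphi = h + \mu h^2/2$ for a large parameter $\mu$, or a convex exponential $\varphi = -e^{-\lambda h}/\lambda$. By tuning $\mu$ (or $\lambda$), the strong pseudo-convexity of $\Sigma$ in \eqref{pseudo.cvx} lifts to a pseudo-convexity condition for $\varphi$ itself: $\nabla^2 \varphi(X,X) < 0$ whenever $g(X,X) = g(X,\nabla\varphi) = 0$ and $X \neq 0$, together with the analogous bicharacteristic condition near $P$.

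The heart of the argument is a Carleman-type estimate of the form
\[ \tau \int e^{2\tau\varphi}\paren{\abs{\partial u}^2 + \tau^2 \abs{u}^2} \ud V_g \le C \int e^{2\tau\varphi} \abs{L_g u}^2 \ud V_g \]
valid for every $u \in \smooth{M}$ supported in a sufficiently small neighborhood of $P$ and every $\tau \geq \tau_0$. To prove it, one conjugates $\Box_g$ by $e^{\tau\varphi}$, decomposes the resulting operator into its formally self-adjoint and skew-adjoint parts $A+B$, and exploits the identity $\|(A+B)u\|^2 = \|Au\|^2 + \|Bu\|^2 + \brcmp{[A,B]u, u}$. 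Thanks to the pseudo-convexity of $\varphi$, the principal symbol of the commutator $[A,B]$ is positive definite of order $\tau$ on the characteristic cone of $\Box_g$; a G\aa rding-type argument then converts this symbolic positivity into the desired $L^2$ coercivity. The lower-order terms $a^\alpha\partial_\alpha + V$ in $L_g$ are absorbed on the left by taking $\tau$ still larger, which is possible because the left-hand side controls both derivatives of $u$ and $\tau^2|u|^2$.

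The unique continuation conclusion then follows by a standard cutoff argument. Fix a small $\e > 0$ and a cutoff $\chi$ that equals $1$ on $\{\varphi \geq \varphi(P) - \e/2\} \cap \Sigma^+$ near $P$ and is supported in a slightly larger neighborhood. Apply the Carleman estimate to $u = \chi\phi$. Since $L_g\phi = 0$ and the Cauchy data of $\phi$ vanish on $\Sigma$, the expression $L_g u = [L_g, \chi]\phi$ is supported in the region $\{\varphi \leq \varphi(P) - \e/2\}$ where $\nabla\chi \ne 0$. The weight therefore suppresses the right-hand side by a factor of $e^{-\tau\e}$ relative to the left, and sending $\tau \to \infty$ forces $\phi \equiv 0$ on $\{\varphi > \varphi(P) - \e/4\} \cap \Sigma^+$, which is a relatively open neighborhood of $P$ in $\Sigma^+$.

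The main obstacle is the Carleman estimate itself. The symbolic positivity of the commutator $[A,B]$ must be extracted precisely, and the lower-order symbolic errors must be dominated uniformly in $\tau$; the tuning between the convexification parameter in $\varphi$ and the Carleman parameter $\tau$ is delicate, since one must ensure that the commutator positivity, which is strict only on the characteristic set, can be propagated off the characteristic set using the elliptic contributions of $A$ and $B$. The geometric construction of $\varphi$ is comparatively routine once the pseudo-convexity of $h$ is in hand, but it is exactly at the symbolic level that hypothesis \eqref{pseudo.cvx} is used.
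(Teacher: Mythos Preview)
Your outline is correct and is precisely the classical Carleman-estimate approach that the paper alludes to. Note, however, that the paper does not actually prove Theorem~\ref{uniq.H}: it is quoted as a known result from H\"ormander~\cite{hor:lpdo4}, and the paper only remarks in one sentence that the proof relies on Carleman estimates built from a weight function with pseudo-convex level sets. Your proposal fills in exactly that sketch, so there is nothing to compare.
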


The condition \eqref{pseudo.cvx} can be interpreted geometrically as convexity of $\Sigma^+$ with respect to null geodesics at $P \in \Sigma$.
Specifically, \eqref{pseudo.cvx} holds if and only if any null geodesic that is tangent to $\Sigma$ at $P$ locally lies in $\Sigma^-$, with first order of contact at $P$.
The necessity of the pseudo-convexity condition has been shown by Alinhac,
 \cite{alin:non_unique}: He produced examples of wave operators $L_g$ for which 
 unique continuation, as formulated in Theorem \ref{uniq.H}, fails across a {\it non-pseudoconvex} surface $\Sigma$. 

The proof of Theorem \ref{uniq.H} relies on Carleman estimates for $\Box_g$. A key element of such estimates is the construction of a weight function $f$ whose level sets are pseudo-convex, and for which the level sets $\{f = C\} \cap \Sigma^+$ are compact near $P$.

\subsubsection{Second-order Elliptic Equations}

Unique continuation holds always for second order elliptic operators across smooth hypersurfaces; see \cite{cald:unique}.
However, an interesting modification is when one replaces the assumption of zero Cauchy data on a hypersurface with data \emph{at a point}.
In that case, the necessary requirement, due to \cite{aron:uc, cord:uc}, is vanishing to infinite order at that point:\footnote{The infinite-order vanishing 
 is clearly necessary, as the example of homogenous harmonic polynomials of any order shows. }

\begin{definition}
We say that $\phi$ \emph{vanishes to infinite order} at $P \in \R^m$ if  there exists $\delta > 0$ such that for every $N \in \mathbb{N}$,
\[ \int_{ B (P, \delta) } |\phi|^2 r^{-N} < \infty \text{,} \]
where $r (x) = |x - P|$.
\end{definition}

\begin{theorem} \cite{aron:uc, cord:uc}
Assume that $\phi$ solves a second order elliptic equation
\beq
\label{elliptic}
H\phi:=-\Delta_g \phi + a^\alpha \partial_\alpha \phi + V \phi = 0,
\eeq 
with smooth coefficients in a domain $\mc{D} \subset \R^m$.
Then, if $\phi$ vanishes to infinite order at $P \in \mc{D}$, it vanishes in a neighborhood of $P$.
\end{theorem}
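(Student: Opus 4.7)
The plan is to proceed via a Carleman-type estimate with a weight that is singular at $P$, following the classical Aronszajn--Cordes strategy. First I would pass to geodesic normal coordinates centered at $P$, so that $g_{ij}(P) = \delta_{ij}$ and the geodesic distance $r(x)$ agrees with the Euclidean radial function $|x-P|$. In these coordinates $-\Delta_g$ is a smooth perturbation of the flat Laplacian near $P$, and the variable-coefficient correction, together with the drift $a^\alpha \partial_\alpha$ and the potential $V$, will be absorbed as lower-order terms.

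The key analytic step is to establish a weighted estimate of the form
\[
\int e^{2\tau f(r)} \brak{\tau^3 r^{-2}|u|^2 + \tau |\nabla u|^2} \, \ud x \;\leq\; C \int e^{2\tau f(r)} |H u|^2 \, \ud x,
\]
valid for all $\tau$ sufficiently large and all smooth $u$ supported in a small punctured ball $B(P,r_0)\setminus \brac{P}$, with a weight such as $f(r) = -\log r$ (suitably convexified in $\log r$) or $f(r) = r^{-\delta}$ for small $\delta > 0$. I would obtain this by the standard conjugation procedure: set $v = e^{\tau f} u$, decompose $e^{\tau f} H e^{-\tau f}$ into its self- and skew-adjoint parts, and produce a positive commutator via integration by parts. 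In the elliptic setting every sphere $\brac{r = \mr{const}}$ is pseudo-convex with respect to $-\Delta_g$, so the positivity of the conjugated Hessian is essentially automatic once $f$ is strictly convex as a function of $\log r$; the first- and zeroth-order terms of $H$ are then absorbed by the $\tau^3$ and $\tau$ weights for $\tau$ large.

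To conclude, I apply the estimate with $u = \chi \phi$, where $\chi$ is a smooth cutoff identically one on $B(P,r_0/2)$ and supported in $B(P,r_0)$. Since $H(\chi \phi) = [H,\chi]\phi$, the right-hand side is supported in the annulus $A = \brac{r_0/2 \leq r \leq r_0}$, where $e^{2\tau f}$ is uniformly bounded in $\tau$. The infinite-order vanishing hypothesis is precisely what is needed to justify that the left-hand side is finite despite the singularity of $f$ at $P$. Restricting the left-hand side to a smaller inner ball $B(P,r_0/4)$, on which $f \geq f(r_0/4) > f(r_0/2)$, and letting $\tau \to \infty$, one forces $\phi \equiv 0$ on $B(P,r_0/4)$.

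The main obstacle is the derivation of the Carleman estimate near the singularity of the weight: one must find $f$ with enough convexity in the $\log r$ variable to dominate both the curvature corrections in $-\Delta_g$ and the lower-order perturbations uniformly as $r\to 0$, while keeping $|\nabla f|$ from growing so fast that the positive commutator is destroyed. The convexified logarithmic weight (or the power weight $r^{-\delta}$) is precisely the device that resolves this tension, but verifying the signs and absorbing all error terms with sharp enough constants is the technical heart of the argument.
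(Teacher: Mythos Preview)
The paper does not contain a proof of this theorem; it is stated in the introduction as a classical result and attributed to Aronszajn and Cordes via the citations \cite{aron:uc, cord:uc}, so there is no in-paper argument to compare against. Your outline is essentially the standard Aronszajn--Cordes strategy (Carleman estimate with a radially singular weight, conjugation and positive commutator, cutoff plus $\tau \to \infty$), and is a correct sketch of how the cited references establish the result.
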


An analogue of this question is to assume infinite-order vanishing at infinity:
\begin{question}
Given self-adjoint operators $H$ of the form (\ref{elliptic}) and a solution $\phi$ to 
\begin{equation}
  \label{eq:ev}
  H\phi=\lambda \phi \qquad \text{with } \lambda\ge 0\,,
\end{equation}
in  a neighborhood $G$ of infinity in $\mathbb{R}^n$, then if $\phi$ satisfies
\beq \int_G \phi^2r^N<\infty \text{,} \qquad \text{for all } N \in \mathbb{N} \text{,} \eeq
and $a^\alpha, V$ satisfy suitable decay conditions, does $\phi$ vanish in $G$?
\end{question}  

This has been derived by many authors in various settings, for example \cite{agmon:lower, hor:lpdo2, hor:lpdo3, IJ:pos-eig, kato:growth, koch_tat:carl, mesh:inf_decay}, to name a few. 
One of the motivations for this question is that (in the case where $g$ is the Euclidean metric or a small perturbation  on $\mathbb{R}^n$) an affirmative answer implies the non-existence of $L^2$-solutions to \eqref{eq:ev} with $\lambda>0$.
For if $\phi\in\mathrm{L}^2$ is a solution to \eqref{eq:ev} with $\lambda>0$, then the equation implies that $\phi$ vanishes to infinite order at infinity, and the question reduces to unique continuation from infinity. (This is implicit in \cite[Ch. XIV]{hor:lpdo2}.)
In the case where $\lambda=0$, the infinite order vanishing cannot be derived and must be a priori assumed. 
Our results in this paper can be thought of as direct analogues of the above problem for second-order hyperbolic equations. 

\subsection{Time-periodicity and applications}
We now turn to the connection with periodic-in-time solutions.

\subsubsection{Time-periodic solutions}
Let us note here that the absence of positive eigenvalues $\lambda>0$ for $H$ is equivalent to the non-existence of periodic-in-time solutions of the form
\begin{equation}
  \label{eq:sepv}
  \phi(t,x):=e^{i\sqrt{\lambda}t}\psi(x)\,,
\end{equation}
to the corresponding wave equation 
\beq
\label{wave2}
-\partial^2_{tt}\phi(t,x)-H\phi(t,x)=0\,,
\eeq
over the $(n+1)$-dimensional Minkowski space-time $\mathbb{R}^{n+1} = \{ t \in \mathbb{R} \text{, } x \in \mathbb{R}^n \}$, with $\psi(x) \in L^2 (\mathbb{R}^n )$.
Results on the absence of positive eigenvalues 
of Schr\"odinger operators $H:=-\Delta +V$, (with $\Delta$ being the Euclidean (flat) Laplacian over $\mathbb{R}^n$ and $V$ a suitably decaying potential),
have been derived by Kato, \cite{kato:growth}, and Agmon, \cite{agmon:lower}, for potentials $V$ obeying suitable pointwise decay conditions; see also \cite{reed-sim:scat}. 
More recently,
results have been obtained for rough potentials $V$ in suitable $\mathrm{L}^p$ spaces, \cite{IJ:pos-eig, koch_tat:carl}. 
One can also prove the absence of the zero eigenvalue of operators $H$, which correspond to constant-in-time solutions of (\ref{wave2}), but the decay assumptions on the solution must be strengthened to vanishing to infinite order at infinity, and the potential $V$ must decay faster; see \cite{krs:carl}.
\footnote{In a separate direction, Finster, Kamran, Smoller, and Yau, \cite{fksy:nonex}, proved the non-existence of periodic-in-time solutions of the Dirac equation in the Kerr exterior.}

Now a time-periodic solution of the form \eqref{eq:sepv} to (\ref{wave2}) would in fact vanish on the entire $\mc{I}^+, \mc{I}^-$, in the sense that we will be considering here for solutions of (\ref{wave}), and would thus fall under the assumptions of our theorems below.
In this sense, our work can be considered a generalization of the above results; however, we treat time-dependent wave equations directly.
Furthermore we will be considering a more localized version of this problem: we show that vanishing of the 
solution of (\ref{wave}) on {\it parts} of $\mc{I}^+,\mc{I}^-$ suffices to derive the vanishing of the solution in a {\it part} of the
interior. Our results are robust in that they will hold for  perturbations of the Minkowski metric. 
In fact a large part of this paper is concerned with understanding  how the (optimal) result depends on the 
geometry of the background metric. 
The conditions we impose on the lower-order terms are similar to those one imposes for the zero-eigenvalue problem for operators $H=\Delta +a^\alpha\partial_\alpha+V$. 
Results on unique continuation from spatial  infinity for time-dependent Schr\"odinger equations have also recently been obtained by Escauriaza, Kenig, Ponce and Vega; see \cite{ekpv:ucSchr} and references therein.

\subsubsection{Applications to general relativity}
\label{sec:intro:relativity}

A separate question to which our study here pertains directly, is that of inheritance of symmetry: whether matter fields coupled to the space-time via the Einstein equations must inherit the symmetries of the underlying space-time. In particular, Bi\v{c}\'ak, Scholtz, and Tod, \cite{bic_scho_tod:time_per_sc:1, bic_scho_tod:time_per_sc:2}, consider Einstein-Maxwell and (various massless and massive) Einstein-scalar field space-times, for which the underlying space-time is stationary. Under the assumptions of analyticity at $\mc{I}^-$ and asymptotic simplicity, they derive the inheritance of symmetry for some of the fields in question as follows: From the asymptotic simplicity of the space-time metric, the authors show that the $\bf T$-derivative of the matter field $\tilde{\phi}$ must vanish to infinite order on $\mc{I}^-$, where $\mathbf{T}$ is the  stationary  Killing field. The real-analyticity of the matter fields at $\mc{I}^-$ then implies that the fields vanish off $\mc{I}^-$ as well. Our Theorems \ref{thm1} and \ref{thm1.pm} below, applied to $\mathbf{T} \tilde{\phi}$, allow us to remove the analyticity assumption for those matter models for which the equations of motion reduce to  a wave equation of the type considered there.\footnote{In view of \cite{IK:killing}, one can probably not hope that analyticity can in fact be {\it derived}  from the nature of the problem.}

On the other hand,  as  recalled in \cite{bic_scho_tod:time_per_sc:1, bic_scho_tod:time_per_sc:2}, there are examples obtained by Bizon-Wasserman, \cite{biz_wass:boson}, of massive Einstein-Klein-Gordon space-times in which the space-time is static but the field is in fact time-periodic, thus the underlying symmetry is {\it not} inherited. These examples do  {\it not} contradict the theorems in \cite{bic_scho_tod:time_per_sc:1, bic_scho_tod:time_per_sc:2}, since these solutions are manifestly non-analytic at $\mc{I}^-, \mc{I}^+$. Nonetheless, such examples raise the challenge of finding a suitable condition on the vanishing of the $\mathbf{T}$-derivatives  of the fields which would allow for the extension of symmetry (alternatively, the vanishing of $\mathbf{T} \tilde{\phi}$ in the space-time) to hold. Theorems \ref{O1} and \ref{O1.pm} below provide such a condition, in the vanishing of the solution at a specific exponential rate; for general wave equations, this is nearly optimal.

\subsection*{Acknowledgments.}

We are grateful to Sergiu Klainerman and Alex Ionescu for many helpful conversations. We thank them for generously sharing their results in \cite{ik:private}.
The first author was partially supported by NSERC grants 488916 and 489103, and a Sloan fellowship.
The second author was supported by NSF grant 0932078 000, while in residence at MSRI, Berkeley, CA, in the fall semester 2013.

\section{Results and Discussion} \label{sec:results}

Our main results deal with linear wave equations on various backgrounds.  The results readily apply to semi-linear wave equations,
under suitable pointwise decay assumptions on the solutions.
For simplicity of the presentation, we consider the case of one scalar equation, although the methods generalize readily to systems.

\subsection{Perturbations of Minkowski Space-time}

The first set of results deals with $(n+1)$-dimensional Minkowski space-time $\R^{n+1}$ and a large class of its perturbations.
Recall that the Minkowski metric itself is given by
\begin{equation} \label{eq.met_mink} g_{\rm M} := -4 \,\ud u \ud v + \sum_{A,B=1}^{n-1} r^2 \g_{AB} \ud y^A \ud y^B \text{,} \end{equation}
where $u$ and $v$ are the standard optical functions
\begin{equation} \label{optical} u := \frac{1}{2} (t - r) \text{,} \qquad v := \frac{1}{2} (t + r) \text{,} \end{equation}
and where $r (u, v) = v - u$.
In addition, $y^A, y^B$ are coordinates on the sphere $\Sph^{n-1}$, while $\g_{AB}$ is the round metric on $\Sph^{n-1}$ expressed in those coordinates. 
Recall also that the future and past null infinities $\mc{I}^\pm$ of $\R^{n+1}$ correspond to the boundaries
\[ \mc{I}^+ := \{ v = +\infty \text{, } u \in (-\infty, +\infty) \} \text{,} \qquad \mc{I}^- := \{ u = -\infty \text{, } v \in (-\infty, +\infty) \} \text{.} \]

We make the convention, both here and below, that uppercase Roman indices $A$, $B$, etc.~correspond to coordinates on the level spheres $\mc{S}_{u, v}$ of $(u, v)$, while Greek indices $\alpha$, $\beta$, etc.~correspond to all $n + 1$ coordinate functions $u, v, y^1, \dots, y^{n-1}$.

\begin{definition}
For any fixed $\e > 0$:
\begin{itemize}
\item Let $f_\e$ denote the function 
\begin{equation} \label{fe} f_\e := (v + \e)^{-1} (-u + \e)^{-1} \text{.} \end{equation}

\item Moreover, for $\omega > 0$, we define the corresponding domain
\begin{equation} \label{domain} \mc{D}^\e_\omega := \{ 0 < f_\e < \omega \} \text{.} \end{equation}

\item Define also the following subsets of future and past null infinity:
\begin{equation} \label{Ie} \mc{I}^+_\e := \{ v = +\infty, u \leq \e \} \subset \mc{I}^+ \text{,} \qquad \mc{I}^-_\e := \{ u = -\infty, v \geq -\e \} \subset \mc{I}^- \text{.} \end{equation}
\end{itemize}
\end{definition} 

The function $f_\e$ is a key quantity that will be used both to state and to prove our theorems.
Thus, we collect some simple observations concerning $f_\e$ here:
\begin{itemize}
\item The positive level sets of $f_\e$ turn out to be strongly pseudo-convex (see Section \ref{sec:psc}).
Moreover, these level sets all intersect $\mc{I}^+$ and $\mc{I}^-$ at the spheres $\{ u = \e, v = +\infty \}$ and $\{ v = -\e, u = -\infty \}$, respectively (see Figure \ref{fig:minkowski:pseudoconvexity}).

\item Note that $\{ f_\e = 0 \}$ corresponds to the segments $\mc{I}^+_\e \cup \mc{I}^-_\e$ of null infinity.

\item Note also that $f^\e \sim r^{-2}$ near spatial infinity $\iota^0$, while $f^\e \sim r^{-1}$ at the interior points of $\mc{I}_\e^+$ and $\mc{I}_\e^-$. 
\end{itemize}

We will be considering perturbations of $g_{\rm M}$ of the general form:
\begin{equation} \label{gen.form.i} \begin{aligned}
g &= \mu\, \ud u^2 - 4 K \, \ud u \ud v + \nu\, \ud v^2 + \sum_{A, B = 1}^{n-1} r^2 \gamma_{AB} \ud y^A \ud y^B \\
&\qquad + \sum_{A = 1}^{n-1} ( c_{Au} \ud y^A \ud u + c_{Av} \ud y^A \ud v ) \text{,}
\end{aligned} \end{equation}
where again $r = v - u$, and $u, v$ are defined as in \eqref{optical}.
In order to describe precisely the asymptotic conditions required for our coefficients $\mu$, $\nu$, $K$, $c_{Au}$, $c_{Av}$, $\gamma_{AB}$, we make the following definitions:

\begin{definition} \label{O}
Given a function $G = G (u, v)$, we define the following:
\begin{itemize}
\item A $\mc{C}^0$-function $\varphi$ belongs to $\mc{O}^\delta (G)$ iff $|\varphi| \leq \delta G$.

\item A $\mc{C}^0$-function $\varphi$ belongs to $\mc{O} (G)$ iff $\varphi \in \mc{O}^\delta (G)$ for some $\delta > 0$.

\item If both $G$ and $\varphi$ are $\mc{C}^1$, then we say that $f \in \mc{O}_1^\delta (G)$ iff
\[ \varphi \in \mc{O}^\delta (G) \text{,} \qquad | \partial_u \varphi | \leq \delta | \partial_u G | \text{,} \qquad | \partial_v \varphi | \leq \delta | \partial_v G | \text{,} \qquad | \partial_I \varphi | \leq \delta G \text{.} \]

\item Likewise, we say that $\varphi \in \mc{O}_1 (G)$ iff $\varphi \in \mc{O}_1^\delta (G)$ for some $\delta > 0$.
\end{itemize}
\end{definition}

In terms of the above, the decay assumptions for $g$ can now be stated as follows:
\begin{itemize}
\item \emph{There exists $\delta > 0$, sufficiently small with respect to $\e$, such that}
\begin{equation} \label{components.i} \begin{aligned}
K = 1 + \mc{O}_1^\delta ( r^{-2} ) \text{,} &\qquad \gamma_{AB} = \g_{AB} + \mc{O}^\delta_1 ( r^{-1} ) \text{,} \\
c_{Au}, c_{Av} = \mc{O}_1^\delta ( r^{-1} ) \text{,} &\qquad \mu, \nu = \mc{O}^\delta_1 ( r^{-3} ) \text{.}
\end{aligned} \end{equation}
\end{itemize}

We present two results for wave equations over such background metrics.
The results depend on the asymptotic behaviour of the lower-order terms, and the assumptions  vary accordingly.
Roughly speaking, our first theorem applies to linear wave equations over $(\R^{n+1}, g)$ whose lower order terms decay
sufficiently rapidly at infinity.
\footnote{The required fall-off matches the one needed to rule out the existence of the zero-eigenvalue for the 
corresponding elliptic operator. }
We will show that if solutions of such equations vanish {\it to infinite order} at $\mc{I}^+_\e$ and $\mc{I}^-_\e$, 
then the solution must vanish in an open domain in $\R^{n+1}$ which contains $\mc{I}^+_\e \cup \mc{I}^-_\e$ on its boundary.

\begin{theorem} \label{thm1}
Consider a perturbed Minkowski metric $g$ over $\R^{n+1}$ of the form \eqref{gen.form.i}, \eqref{components.i}.
Consider also any wave operator $L_g := \Box_g + a^\alpha \partial_\alpha + V$, where
\begin{equation} \label{thm1.coeff} \begin{aligned}
a^u \in \mc{O} ( (v + \e)^{-1} r^{-\frac{1}{2}} ) \text{,} &\qquad a^v \in \mc{O} ( (-u + \e)^{-1} r^{-\frac{1}{2}} ) \text{,} \\
a^I \in \mc{O} ( f^\frac{1}{2} r^{-\frac{3}{2}} ) \text{,} &\qquad V \in \mc{O} ( f^{1 + \eta} ) \text{,}
\end{aligned} \end{equation}
for some $\eta > 0$.
Let $\omega > 0$, and consider any $\mathcal{C}^2$-solution $\phi$ on $\mc{D}^\e_\omega$ of the equation $L_g \phi = 0$, which in addition satisfies
\begin{equation} \label{key.assn1} \int_{ \mc{D}^\e_\omega } \phi^2 r^N + \int_{ \mc{D}^\e_\omega } | \partial_v \phi |^2 r^N + \int_{ \mc{D}^\e_\omega } | \partial_u \phi |^2 r^N + \sum_{I = 1}^{n-1} \int_{ \mc{D}^\e_\omega } | \partial_I \phi |^2 r^N < \infty \text{,} \end{equation}
for all $N \in \mathbb{N}$.
Then, there exists $0 < \omega^\prime < \omega$ so that $\phi \equiv 0$ in $\mc{D}^\e_{\omega^\prime}$. 
\end{theorem}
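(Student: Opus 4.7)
The proof will proceed by a Carleman estimate, in the spirit of H\"ormander's approach to unique continuation for pseudo-convex foliations, with a weight built from $f_\e$. I would pick a convexification $F = F(f_\e)$---most naturally $F = -\log f_\e$, or $F = f_\e^{-p}$ for suitable small $p>0$---whose level sets coincide with those of $f_\e$ and thus inherit the strong pseudo-convexity announced in Section \ref{sec:psc}; the strict convexity $F'' > 0$ supplies the positive ``bulk'' term needed to close the estimate. The target inequality has the schematic form
\begin{equation}
\lambda^3 \int_{\mc{D}^\e_\omega} e^{2\lambda F} \phi^2 \, w_0 + \lambda \int_{\mc{D}^\e_\omega} e^{2\lambda F} \abs{\partial \phi}_g^2 \, w_1 \leq C \int_{\mc{D}^\e_\omega} e^{2\lambda F} (\Box_g \phi)^2 \, w_2 \text{,}
\end{equation}
valid for $\lambda$ sufficiently large and $\phi$ compactly supported in $\mc{D}^\e_\omega$, with positive weights $w_0, w_1, w_2$ depending on $f_\e$.

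The central computation is the standard conjugation $e^{\lambda F} \Box_g (e^{-\lambda F} \psi) = \Box_g \psi - 2 \lambda \, \nabla F \cdot \nabla \psi + (\lambda^2 \abs{\nabla F}_g^2 - \lambda \Box_g F) \psi$, followed by squaring and integrating by parts so that the positive terms forced by the pseudo-convexity of $F$ dominate the indefinite cross terms. Most cleanly this is carried out after a conformal rescaling $\sg = \Omega^2 g$ with $\Omega \sim r^{-1}$ (hinted by the macros $\sg$, $\snabla$ in the preamble), which brings $\mc{I}^\pm_\e$ to a finite boundary and turns the unique continuation ``from infinity'' into the usual unique continuation from a boundary in the rescaled picture. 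The perturbative decay conditions \eqref{components.i} then guarantee that the corrections to the flat Carleman calculation can be absorbed once $\delta$ is small relative to $\e$.

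The lower-order terms $a^\alpha \partial_\alpha \phi + V \phi$ are then treated by Cauchy--Schwarz: the contributions $\int e^{2\lambda F} g_{\alpha\beta} a^\alpha a^\beta \abs{\partial \phi}^2$ and $\int e^{2\lambda F} V^2 \phi^2$ must be absorbable by the bulk terms on the left. The weights in \eqref{thm1.coeff} are calibrated precisely so that, after matching with $w_0$ and $w_1$, the factors $\abs{a}_g^2 / \lambda$ and $V^2 / \lambda^3$ become small once $\lambda$ is large---in particular the slack $\eta > 0$ in $V \in \mc{O}(f_\e^{1+\eta})$ is what buys the required smallness near $\{f_\e = 0\}$, where $V^2$ competes most strongly with $w_0$.

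To conclude, I apply the Carleman estimate to $\chi \phi$, where $\chi$ is a smooth cutoff equal to $1$ on $\mc{D}^\e_{\omega^{\prime\prime}}$ and supported in $\mc{D}^\e_{\omega^\prime}$, for $0 < \omega^{\prime\prime} < \omega^\prime < \omega$. Since $L_g \phi = 0$, the right-hand side is driven by the commutator $[\Box_g, \chi] \phi$, which is supported in $\{ \omega^{\prime\prime} < f_\e < \omega^\prime\}$ where $e^{2\lambda F}$ is bounded by $e^{2\lambda F(\omega^{\prime\prime})}$; the left-hand side dominates the bulk over $\mc{D}^\e_{\omega^{\prime\prime}/2}$, where $e^{2\lambda F} \geq e^{2\lambda F(\omega^{\prime\prime}/2)}$ with $F(\omega^{\prime\prime}/2) > F(\omega^{\prime\prime})$. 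Sending $\lambda \to \infty$ annihilates the right-hand side and forces $\phi \equiv 0$ on $\mc{D}^\e_{\omega^{\prime\prime}/2}$. The admissibility of this argument is exactly what hypothesis \eqref{key.assn1} provides: since $F \sim \log r$ near the interior of $\mc{I}^\pm_\e$ and $F \sim 2 \log r$ near $\iota^0$, the weight $e^{2\lambda F}$ grows only polynomially in $r$, so the infinite-order vanishing makes $e^{2\lambda F} \phi^2$ and the analogous derivative integrals finite---after a further interior cutoff approaching $\{f_\e = 0\}$ and a limiting argument. The main obstacle I anticipate is the Carleman estimate itself: ensuring that the pseudo-convex positivity is strong enough, uniformly down to $\{f_\e = 0\}$, to absorb both the geometric perturbation and the lower-order terms is where the delicate choice of $F$ and of the weights $w_0, w_1, w_2$ matters.
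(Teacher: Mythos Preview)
Your overall strategy matches the paper's: a Carleman estimate for a reparametrization $F(f_\e)$, derived after a conformal inversion, followed by the standard cutoff-and-send-$\lambda\to\infty$ argument with lower-order terms absorbed via the decay hypotheses \eqref{thm1.coeff}. The cutoff argument and the role of \eqref{key.assn1} are described correctly.

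There are two specific technical points where your proposal would stall as written. First, the reparametrization: neither of your two candidates for $F$ actually works for Theorem~\ref{thm1}. The pure logarithm $F = -\log f_\e$ is the ``apparent'' choice, but it fails to produce a positive zero-order bulk term: in the paper's scheme the crucial weight is $G := -(fF')'$, and for $F = \pm\log f$ one gets $fF' = \text{const}$, hence $G \equiv 0$ and no control of $\int e^{2\lambda F}\phi^2$. The paper fixes this by the perturbed choice $F = \log f - f^p$ for small $p>0$, which yields $G = p^2 f^{p-1} > 0$ while keeping the weight $e^{-2\lambda F} = f^{-2\lambda} e^{2\lambda f^p}$ polynomial in $r$, so that \eqref{key.assn1} still guarantees finiteness. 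Your alternative $F = f_\e^{-p}$ does give $G>0$, but then $e^{2\lambda F} = e^{2\lambda f^{-p}}$ grows like $e^{c r^{2p}}$ near $\iota^0$, which is \emph{not} controlled by the polynomial vanishing \eqref{key.assn1}; that choice belongs to Theorem~\ref{O1}, not Theorem~\ref{thm1}.

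Second, the conformal step: the paper stresses that a Penrose-type rescaling $\Omega \sim r^{-1}$ (which compactifies $\mc{I}^\pm_\e$ to an \emph{incomplete} cone with boundary spheres) does \emph{not} allow the error terms to be absorbed. The correct inversion is $\bar g = K^{-1} f_\e^{\,2}\, g$, under which $\mc{I}^\pm_\e$ becomes a \emph{complete} double null cone from $\iota^0$; only in this picture do the degenerating pseudo-convexity weights $\Psi \sim \e/r$ and the $f$-weights line up so that all error terms (both from the metric perturbation and from the conjugation) can be absorbed uniformly as $f\to 0$. Your hint ``$\Omega \sim r^{-1}$'' is the wrong scale near $\iota^0$ (where in fact $\Omega \sim f \sim r^{-2}$) and corresponds to the compactification the paper explicitly rules out.
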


We digress now to discuss the optimality of the above, as well as some alternate ways of phrasing this result.

\begin{rmk}
An alternate statement of Theorem \ref{thm1} is in terms of a differential \emph{inequality}.
More specifically, one can rephrase Theorem \ref{thm1} with $\phi$ satisfying
\[ | \Box_g \phi | \leq \sum_\alpha | a^\alpha | | \partial_\alpha \phi | + | V | | \phi | \text{,} \]
rather than the linear equation $L_g \phi \equiv 0$.
Analogous variants also hold true with respect to the remaining main theorems in this paper.
Moreover, these alternative formulations can be proved in exactly the same manner. 
\end{rmk}


\begin{rmk}
The assumption $V \in \mc{O} ( f^{1 + \eta} )$, $\eta > 0$, implies that $V$ vanishes at a  rate $r^{-2 - 2 \eta}$ at spatial infinity and at a rate of $r^{-1 - \eta}$ at (future and past) null infinities.
In fact, straightforward examples for elliptic operators of the form $\Delta + V$ reveal that if $V$ is only assumed to decay at a rate $r^{-2 + 2 \eta}$, the result would fail.
Thus, our theorem is nearly optimal with regards to the decay conditions on the lower-order terms.
\footnote{In fact, this decay condition can be relaxed a bit, using the generalization of the Carleman estimates described in the remark at the end of Section \ref{sec:carleman}. However, we prefer to give the cleaner statement above, rather than burden the reader with the most general version of the result possible.} 
\end{rmk}

\begin{rmk}
Since this result only assumes smoothness of the solution in $\mc{D}^\e_\omega$, as opposed to all of $\R^{n+1}$, the assumption of vanishing of infinite order, in the sense of \eqref{key.assn1}, is \emph{necessary}.
Indeed, in Minkowski space-time, one can consider
\begin{equation} \label{counterex_order} \phi_N := \partial_{i_1} \dots \partial_{i_N} r^{-n + 2} \text{,} \end{equation}
where the $\partial_{i_j}$'s are spatial Cartesian coordinate derivatives on $\R^n$; see also \cite{helg:radon}.
This function solves $\Delta_{\R^n} \phi_N = 0$ away from the origin, and it vanishes to order $N + n - 2$ at infinity.
\end{rmk}

\begin{rmk}
It is worth noting that in Minkowski space-time, our methods imply that a smooth solution 
$\phi$ vanishes in the whole domain $f < \e^{-2}$, which in particular includes the entire double-null cone centered at the origin. 
Thus, standard energy estimates imply that in this case, $\phi$ vanishes in the entire space-time. 
\end{rmk}

Let us recall at this point the standard Penrose conformal compactification of Minkowski space-time.
\footnote{This can be thought of as a Lorentzian analogue of the stereographic projection.} 
Let
\begin{equation} \label{penrose_conf} \Omega := (1 + u^2)^{-\frac{1}{2}} (1 + v^2)^{-\frac{1}{2}}  \text{,} \end{equation} 
and consider the new metric $\bar{g} := \Omega^2 g_{\rm M}$.
Applying the change of coordinates $U := \tan^{-1} u$ and $V := \tan^{-1} v$, we see that
\begin{equation} \label{penrose_met} \bar{g} = -4\, \ud U \ud V + \sin^2 (V - U) ( \ud \theta^2 + \sin^2 \theta \ud \phi^2 ) \text{.} \end{equation}
In particular, $\bar{g}$ extends smoothly to the boundaries $V = +\frac{\pi}{2}$ and $U = -\frac{\pi}{2}$, which correspond to $\mc{I}^+$ and $\mc{I}^-$, respectively.
In fact, the compactified manifold $(\bar{\R}^{n+1}, \bar{g})$ is isometric to a relatively compact domain in the Einstein cylinder $\Sph^n \times \R$ (with the natural product metric); cf.~Figure \ref{fig:penrose:compact}.
\begin{figure}[tb]
  \centering
  \includegraphics{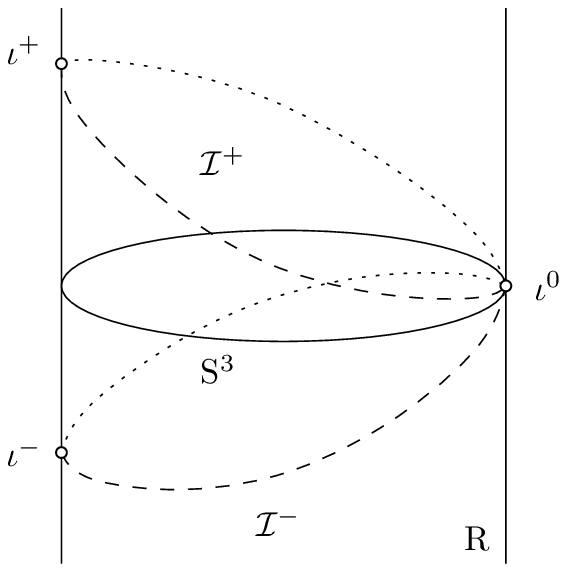}
  \caption{Penrose conformal compactification of Minkowski space-time.}
  \label{fig:penrose:compact}
\end{figure}

\begin{rmk}\label{rmk:penrose}
The methods of \cite{alin:non_unique} strongly suggest that in the Minkowski setting, the assumption of (infinite-order) vanishing on at least
\[ \mc{I}^+_0 \cup \mc{I}^-_0 = \{ v = \infty \text{, } u \leq 0 \} \cup \{ u = -\infty \text{, } v \geq 0 \} \]
is also necessary.
In particular, \cite{alin:non_unique} showed that unique continuation across a (smooth) hypersurface $\mc{H}$ requires pseudo-convexity of $\mc{H}$, in general.
Well-known examples of hypersurfaces that are \emph{not} pseudo-convex in Minkowski space-time are the one-sheeted hyperboloids $\mc{H}_C = \{ -t^2 + r^2 = C \}$.
The failure of pseudo-convexity is captured by the fact that any null geodesic tangent to such a hyperboloid $\mc{H}_C$ will remain on $\mc{H}_C$.

Recalling the Penrose compactification above, and using the conformal Laplacian and its conformal covariance 
(see Section \ref{sec:operators:transform}), we observe that $\bar{\phi} := \Omega^{-(n-1)/2} \phi$ 
solves an analogous wave equation with respect to the compactified metric $\bar{g}$.
Now, the hyperboloids $\mc{H}_C$ are mapped to smooth surfaces $\bar{\mc{H}}_C$ in the Einstein cylinder that converge to $\mc{I}_0^+ \cup \mc{I}^-_0$ (embedded in the Einstein cylinder) as $C \nearrow \infty$.
Furthermore, by the conformal invariance of null geodesics, the $\bar{\mc{H}}_C$'s continue to be ruled by null geodesics.
\footnote{This property can also be seen by examining the null geodesics in the compactified space-time $\bar{\R}^{n+1} \subset \Sph^n \times \R$. 
The geodesic motion can be decomposed into a constant motion in the $\R$-direction and a geodesic motion on $\Sph^n$.
Then, the focusing of the $\bar{\mc{H}}_C$'s (and of the null geodesics on them) is just a feature of the positive curvature of $\Sph^n$.}

In particular, the existence of these non-pseudoconvex surfaces seem to suggest that solutions to the equation in the compactified picture which vanish to infinite order on less than $\mc{I}^+_0 \cup \mc{I}^-_0$ will not necessarily vanish in the interior near those boundaries.
It would follow that $\phi$ also does not vanish.
\end{rmk}

\begin{rmk}
In certain cases, one would like to have assumptions in terms of $\mc{C}^\infty$-norms instead 
of the weighted Sobolev norms in \eqref{key.assn1}. Moreover, based on a formal analysis of the wave equation on the characteristic surfaces 
$\mc{I}^+ \cup \mc{I}^-$, one would expect that, under sufficient regularity assumptions
 on the solution on $\mc{I}^+ \cup \mc{I}^-$, one should be able to only assume vanishing of the radiation field of the solution to derive the above uniqueness.

This is indeed true in Minkowski space-time.
Transforming the wave equation to the Penrose compactified picture, as before, we can argue that \emph{if the solution $\bar{\phi}$ in the Penrose setting were $\mc{C}^\infty$ up to the entire boundary $\mc{I}^+ \cup \mc{I}^-$,} \footnote{In the sense that the function admits a $\mc{C}^\infty$-extension to the entire ambient manifold $\Sph^n \times \R$.} \emph{then only the assumption that $\bar{\phi} = 0$ on $\mc{I}^+ \cup \mc{I}^-$ suffices to derive that the solution vanishes in the interior.}
To see this, one uses the vanishing of $\ud \bar{\phi}$ at $\iota^0$, along with the wave equation for $\bar{\phi}$ and its derivatives (as propagation equations along null geodesics on $\mc{I}^\pm$), to show that all derivatives of $\bar{\phi}$ vanish on $\mc{I}^+ \cup \mc{I}^-$.
This implies infinite-order vanishing in the physical picture, and the result will follow from Theorem \ref{thm1}.

However, the assumption that a function should be $\mc{C}^\infty$ in the compactified setting is quite strong.
For example, all the functions $\phi_N$ defined in \eqref{counterex_order} yield corresponding functions $\bar{\phi}_N$ in the compactified picture which fail to be $\mc{C}^\infty$ precisely at $\iota^0$. 
Furthermore, it should be noted that generic perturbed space-times of the type \eqref{gen.form.i} yield non-smooth metrics in the compactified picture \cite{ch:rome}, and hence the above formal analysis is not possible.
\end{rmk}

A question of separate interest is wave equations on perturbed Minkowski space-times with potentials of order $O(1)$.
This in particular includes massive Klein-Gordon equations, where, as observed by examples
 in \cite{bic_scho_tod:time_per_sc:2, biz_wass:boson}, the previous result fails.
It turns out that one must assume faster than polynomial vanishing of the solution on $\mc{I}^+_\e \cup \mc{I}^-_\e$ to derive uniqueness in this setting.
In particular, we require the solution to vanish faster than the \emph{exponential} rate $\exp ( r^{-4/3} )$.

\begin{theorem}
\label{O1}
Consider a perturbed Minkowski metric $g$ over $\R^{n+1}$ in the form 
\eqref{gen.form.i} and \eqref{components.i}, and consider a wave operator $L_g := \Box_g + a^\alpha \partial_\alpha + V$, where
\begin{equation} \label{O1.coeff} \begin{aligned}
a^u \in \mc{O} ( v^{-1} f^{-\frac{1}{3}} r^{-\frac{1}{2}} ) \text{,} &\qquad a^v \in \mc{O} ( (-u)^{-1} f^{-\frac{1}{3}} r^{-\frac{1}{2}} ) \text{,} \\
a^I \in \mc{O} ( f^\frac{1}{6} r^{-\frac{3}{2}} ) \text{,} &\qquad V \in \mc{O} ( 1 ) \text{.}
\end{aligned} \end{equation}
Let $\omega > 0$, and consider any $\mathcal{C}^2$-solution $\phi$ on $\mc{D}^\e_\omega$ of the equation $L_g \phi = 0$, which in addition satisfies
\begin{equation} \label{key.assn1.O1} \int_{ \mc{D}^\e_\omega } \phi^2 e^{ N r^\frac{4}{3} } + \int_{ \mc{D}^\e_\omega } | \partial_v \phi |^2 e^{ N r^\frac{4}{3} } + \int_{ \mc{D}^\e_\omega } | \partial_u \phi |^2 e^{ N r^\frac{4}{3} } + \sum_{I = 1}^{n-1} \int_{ \mc{D}^\e_\omega }| \partial_I \phi |^2 e^{ N r^\frac{4}{3} } < \infty \text{,} \end{equation}
for all $N \in \mathbb{N}$.
Then, there exists $0 < \omega^\prime < \omega$ so that $\phi \equiv 0$ in $\mc{D}^\e_{\omega^\prime}$. 
\end{theorem}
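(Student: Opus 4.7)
The approach will parallel the proof of Theorem \ref{thm1}: construct a Carleman weight whose level sets coincide with those of $f_\e$ (hence are strongly pseudo-convex by the discussion in Section \ref{sec:psc}), prove an associated Carleman estimate, and combine it with the infinite-order vanishing hypothesis to force $\phi \equiv 0$ near $\mc{I}^+_\e \cup \mc{I}^-_\e$. The essential difference is that since $V$ is only $\mc{O}(1)$, a polynomial Carleman weight $f_\e^{-p}$ is insufficient: the zero-order coefficient it would produce in the Carleman estimate carries a positive power of $f_\e$ and cannot absorb a bounded potential. I would therefore replace it with an exponential weight whose growth is precisely calibrated to the $e^{Nr^{4/3}}$ tolerance in \eqref{key.assn1.O1}.

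The concrete choice is to conjugate $L_g$ with $e^{\tau F}$ where $F := \lambda f_\e^{-2/3}$, and to treat $\tau, \lambda > 0$ as large parameters. Since $f_\e \sim r^{-2}$ near spatial infinity and $f_\e \sim r^{-1}$ in the interior of $\mc{I}^\pm_\e$, we have $F \sim \lambda r^{4/3}$ at spatial infinity, exactly the rate allowed by \eqref{key.assn1.O1}. The Meshkov-type exponent $-2/3$ is the critical one: a formal computation with the Minkowski metric gives $|\nabla f_\e|_{g_{\rm M}}^2 \sim f_\e^3$, so that $|\nabla F|^2_{g_{\rm M}} \sim \lambda^2 f_\e^{-2\alpha+1}$ with $\alpha = 2/3$, i.e.\ $\lambda^2 f_\e^{-1/3}$, and the resulting zero-order coefficient in the Carleman estimate works out to order $\tau^3 \lambda^3$ uniformly in $f_\e$. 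This is precisely what is needed to dominate $|V|^2 \lesssim 1$ when $\tau$ is large. The first-order weights are matched to \eqref{O1.coeff}: the factors $f_\e^{-1/3}$ in $a^u, a^v$ and $f_\e^{1/6}$ in $a^I$ are exactly the fractional powers of $f_\e$ produced by $\nabla F$ in the corresponding components, so that each lower-order term is just barely absorbable.

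Given the estimate, the scheme is to apply it to a smooth cutoff of $\phi$ supported in $\mc{D}^\e_\omega \cap \{F > F_0\}$. The contribution from the interior cutoff boundary $\{f_\e = \omega^\prime\}$ is bounded by a constant times $e^{C\tau}$ for a fixed $C = C(\omega^\prime, \lambda)$, while the contribution from the outer boundary $\{f_\e \to 0\}$ is controlled by $e^{2\tau F} \leq e^{2\tau \lambda r^{4/3}}$ multiplied by the quantities appearing in \eqref{key.assn1.O1} with $N = 2\tau\lambda$, hence tends to $0$ in the integration limit. Sending $\tau \to \infty$ after absorbing the lower-order terms then forces $\phi \equiv 0$ on $\{F > F_0\}$, which is $\mc{D}^\e_{\omega^\prime}$ for an appropriate $\omega^\prime < \omega$.

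The main obstacle is establishing the Carleman estimate itself with a zero-order coefficient that is bounded below independently of $f_\e$. This is where the exponent $-2/3$ is forced upon us: a larger value would break the pseudo-convexity bounds when estimating the principal symbol's commutator, while a smaller value would reintroduce a positive power of $f_\e$ into the zero-order coefficient and thus fail against $V \in \mc{O}(1)$. A secondary technical difficulty is absorbing the commutator contributions from $g - g_{\rm M}$: the decay conditions \eqref{components.i} are exactly strong enough to ensure these terms are subleading with respect to the Minkowskian principal terms, provided $\delta$ is chosen small relative to $\e$, but this needs to be verified term-by-term with the exponential weight in place, since the exponential amplifies any mismatch in decay rates more aggressively than the polynomial weight used for Theorem \ref{thm1}.
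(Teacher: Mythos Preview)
Your choice of reparametrization $F \sim f_\e^{-2/3}$ is exactly right, and your heuristic for why this exponent is forced (matching the $e^{Nr^{4/3}}$ decay while producing a zero-order weight that can dominate $V \in \mc{O}(1)$) agrees with the paper. The paper proves Theorem~\ref{O1} via Proposition~\ref{thm.carleman_f2} with $q=2/3$, i.e.\ the Carleman weight $e^{2\lambda f^{-2/3}}$, and the subsequent unique-continuation argument is the standard cutoff-and-let-$\lambda\to\infty$ scheme you describe.

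The substantive difference is that the paper does \emph{not} derive the Carleman estimate directly on $(\mc{D}^\e_\omega,g)$. Instead it first performs the warped conformal inversion $\bar g = K^{-1} f^2 g$ (Section~\ref{sec:operators:transform}), transforms $L_g\phi=0$ into $\bar L_{\bar g}\bar\phi=0$ with $\bar\phi = \Omega^{-(n-1)/2}\phi$, and proves the Carleman estimate for $\Box_{\bar g}$ in the inverted picture. The point is that after inversion one has $|\bar\nabla f|^2_{\bar g} \simeq f$, $\bar\nabla^2_{\bar N\bar N} f \simeq \tfrac12$, $\Box_{\bar g} f \simeq \tfrac{n+1}{2}$, and $h \simeq \tfrac12$; these normalizations are what drive the uniform positivity of the coefficient $\Lambda$ in Lemma~\ref{thml.Lambda_E} and allow the error terms (from $\mc{E}$, $\Box w$, and the off-diagonal Hessian components) to be absorbed systematically. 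In the physical metric one has instead $|\nabla f|^2_g \simeq f^3$, $\nabla^2_{NN} f \simeq \tfrac32 f^2$, $h \simeq -\tfrac12 f^2$, so the cancellations in the expansion \eqref{eql.Lambda_1} of $\Lambda$ and in \eqref{eql.E_1} occur at different orders, and the hypotheses \eqref{eq.carleman_f2_ass_f_strong}--\eqref{eq.carleman_f2_ass_w} of Proposition~\ref{thm.carleman_f2} are not satisfied as stated. The paper flags this explicitly (``our choice of the conformal inversion is essential here in ensuring that error terms \emph{can} be absorbed''), and the inversion also converts $\mc{I}^\pm_\e$ into a \emph{complete} double null cone, which is what makes the boundary-term argument in Lemma~\ref{thml.boundary} go through cleanly.

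So your outline is correct at the level of the weight and the endgame, but you are missing the conformal-inversion step that the paper relies on to make the Carleman derivation close. If you want to carry out the estimate directly on $g$ you would need to redo the entire algebraic analysis of Section~\ref{sect:carleman_proof_2} with the physical scalings, and in particular verify that the metric-perturbation errors from \eqref{components.i} (which are amplified by the exponential weight, as you note) and the degenerate pseudo-convexity $\pi_{TT} \sim \tfrac{\e}{r} f^2$ still permit absorption; the paper's route sidesteps this by normalizing the geometry first.
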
  
  
In view of the results of \cite{mesh:inf_decay, mesh:diff_ineq}, the required rate of vanishing for the solution is nearly optimal, given the assumptions on the bounds of the lower-order terms.
In particular, it was shown that when $V$ is allowed to be complex-valued, there exist solutions of $( \Delta + V ) \phi = 0$ in $\R^3 \setminus B_1$ obeying the bound $|\phi| \leq C \exp ( -c r^{4/3} )$ which do not vanish.

\subsection{Schwarzschild and positive mass space-times.}
We present our next theorem for Schwarzschild space-times and general perturbations, which include all Kerr metrics.
Surprisingly, the theorem here requires a \emph{weaker} assumption at infinity than in Minkowski space-time; in particular only vanishing on \emph{arbitrarily small} portions of $\mc{I}^+, \mc{I}^-$ 
near $\iota^0$ is assumed.
Morally, this is due to the \emph{stronger} pseudo-convexity arising from the positive mass, rather than from where the chosen hyperboloids are anchored, as in the Minkowski case.

Recall the form of the Schwarzschild metric in the exterior region
\footnote{For the sake of clarity, we stress that we are referring to the Schwarzschild metric in $n + 1 = 4$ dimensions only. We note that the higher-dimensional Schwarzschild metrics actually fall under the assumptions of Theorems \ref{thm1}, \ref{O1} above, {\it not} the theorems below.}
\beq g_{\rm S} := - \left( 1 - \frac{2 m_S}{r} \right) \ud t^2 + \left( 1 - \frac{2 m_S}{r} \right)^{-1} \ud r^2 + r^2 \sum_{A, B = 1}^2 \g_{AB} \ud y^A \ud y^B \text{,} \label{eq:schwarzschild:intro}\eeq
where the mass $m_S$ is a positive constant, and where $r > 2 m_S$.
To express the metric in null coordinates, we recall the definition of the Regge-Wheeler coordinate:
\beq \label{tortoise} r^* (r) := \int_{r_0}^r \left( 1 - \frac{2 m_S}{s} \right)^{-1} \ud s \text{,} \qquad r_0 > 2 m_S \text{.} \eeq
For a fixed constant $r_0 > 2 m_S$, we denote the corresponding optical functions by
\[ u = u_{r_0} := \frac{t - r^*}{2} \text{,} \qquad v = v_{r_0} = \frac{t + r^*}{2} \text{,} \]
and the Schwarzschild metric takes the form:   
\beq \label{schwarz} g_{\rm S} = -4 \left( 1 - \frac{2 m_S}{r} \right) \ud u \ud v + r^2 \sum_{A, B = 1}^2 \g_{AB} \ud y^A \ud y^B \text{.} \eeq

While future and past null infinity are identified (for any choice of $r_0$) with
\[ \mc{I}^+ := \{ v = +\infty \text{, } - \infty < u < \infty \} \text{,} \qquad \mc{I}^- = \{ u = -\infty \text{, } -\infty < v < \infty \} \text{,} \]
respectively, we note that $\{ u = 0 \}$ and $\{ v = 0 \}$ intersect on $\{ t = 0 \}$ at the sphere of radius $r = r_0$; c.f.~Fig.~\ref{fig:v:S:r0}.
\begin{figure}[bt]
  \centering
  \includegraphics{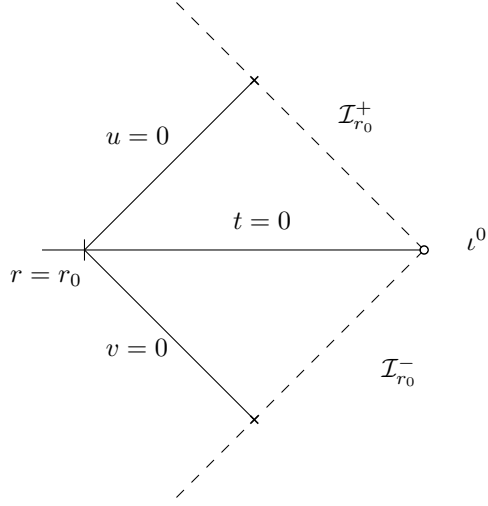}
  \caption{Any small neighborhood of $i^0$ can be chosen to be the region $u<0$, $v>0$.}
  \label{fig:v:S:r0}
\end{figure}
Thus, the region $\mc{D}^{r_0} = \{ v > 0 \text{, } u < 0 \}$ corresponds to a \emph{subdomain} of the entire Schwarzschild exterior.
More precisely, by choosing $r_0$ as large as we wish, $\mc{D}^{r_0}$ corresponds to the exterior region of a bifurcate null surface 
emanating from a sphere of \emph{arbitrarily large radius} $r_0$, or equivalently, an arbitrarily small neighborhood of spacelike infinity, $i^0 = \{ u = -\infty \text{, } v = \infty \}$.

Our next result is that for certain perturbations of the Schwarzschild metric, analogues of Theorems \ref{thm1} and \ref{O1} hold, with the assumption of (infinite-order) vanishing on $\mc{I}^+_\e \cup \mc{I}^-_\e$ replaced by vanishing just on the regions
\[ \mc{I}^+_{r_0} := \{ v = +\infty \text{, } u < 0 \} \text{,} \qquad \mc{I}^-_{r_0} := \{ u = -\infty, v > 0 \} \text{,} \]
\emph{for any chosen $r_0 > 2 m_S$},
and the domain replaced by 
\begin{equation}  \label{fr} \mc{D}^{r_0}_\omega := \{ 0 < f_{r_0} < \omega \} \quad\text{where } f_{r_0} := \frac{1}{(-u)v} \text{.} \end{equation}

Since this feature solely relies on the positivity of the mass $m_S>0$ we are led to consider an even more general class of metrics:
These background metrics correspond to space-times with positive 
but \emph{non-constant} Bondi energy and angular momentum at portions of $\mc{I}^+$ and $\mc{I}^-$ which ``join up" at $\iota^0$.

Specifically, we consider a manifold of the form
\begin{equation} \label{gen.domain} \mc{D} = (-\infty, 0) \times (0, \infty) \times \Sph^{n-1} \text{,} \end{equation}
and we let $u$ and $v$ denote the projections from $\mc{D}$ to its first and second components.
On $\mc{D}$, we consider metrics of the form:
\begin{equation} \label{gen.form.m} \begin{aligned}
g &= \mu \,\ud u^2 - 4 K \,\ud u \ud v + \nu \,\ud v^2 + \sum_{A, B = 1}^{n-1} r^2 \gamma_{AB} \ud y^A \ud y^B \\
&\qquad + \sum_{A = 1}^{n-1} ( c_{Au} \ud y^A \ud u + c_{Av} \ud y^A \ud v ) \,,
\end{aligned} \end{equation}
where $r$ and $m$ are now smooth positive \emph{functions} on $\mc{D}$ which satisfy certain bounds that we impose below, and $y^A:1,\ldots,n-1$ are coordinates on the level spheres $\mc{S}_{u, v}$ of $(u, v)$.

We impose the following assumptions:
\begin{itemize}
\item The components of $g$ satisfy the following bounds:
\begin{equation} \label{components.m} \begin{aligned}
K = 1 - \frac{2 m}{r} \text{,} &\qquad \gamma_{AB} = \g_{AB} + \mc{O}_1 \left( \frac{1}{v - u} \right) \text{,} \\
c_{Au}, c_{Av} = \mc{O}_1 \left( \frac{1}{v - u} \right) \text{,} &\qquad \mu, \nu = \mc{O}_1 \left( \frac{1}{ (v - u)^3 } \right) \,,
\end{aligned} \end{equation}
where $\g$ is the round metric.
\item $m$ satisfies the uniform lower bound
\begin{equation} \label{ass.m.0} m \ge m_{\min} > 0 \text{,} \end{equation}
and $\ud m$ satisfies the following uniform bounds, for some $\eta>0$:
\footnote{Note that \eqref{ass.m.0} and \eqref{ass.m.1} imply $m$ is uniformly bounded from above and has limits at $\mc{I}^\pm$.}
\begin{equation} \label{ass.m.1} | \partial_I m | = \mc{O} ((v-u)(-uv)^{-\eta}) \text{,} \qquad | \partial_u m |, | \partial_v m | = \mc{O} \bigl((v-u)^{-2}\bigr) \text{.} \end{equation}

\item $r$ is bounded on a level set of $v - u$, i.e., there exist $C, M > 0$ such that
\begin{equation} \label{uvr0} 1 \ll | r (p) | \leq C \text{,} \qquad \text{ whenever } v (p) - u (p) = M \text{.} \end{equation}
Moreover, $r$ satisfies the differential inequality
\begin{align}
\label{uvr} &\left[ 1 + \mc{O} \left( \frac{1}{ (v - u)^2 } \right) \right] \ud v - \left[ 1 + \mc{O} \left( \frac{1}{ (v - u)^2 } \right) \right] \ud u = \\
\notag &\qquad = \left( 1 + \frac{2m}{r} \right) \ud r + \sum_{I = 1}^{n-1} \mc{O} \left( \frac{1}{v - u} \right) \ud y^I \text{.}
\end{align}

\item The following estimate holds for some $\eta>0$:
\begin{equation} \label{ass.box} \left| \Box_g \left( \frac{m}{r} \right) \right| = \mc{O} \Bigl( (-uv)^{-1-\eta} \Bigr) \text{.} \end{equation}
\end{itemize}

Note that the conditions imposed on the metrics above allow non-constant mass and angular momentum at $\mc{I}^+, \mc{I}^-$ 
and are consistent with (and weaker than) the ones imposed by Sachs, \cite{sachs:waves}.

We define
\begin{equation} \label{fm} f := \frac{1}{(-u)v} \text{,} \qquad \mc{D}_\omega := \{ 0 < f < \omega \} \text{.} \end{equation}

\begin{theorem} \label{thm1.pm}
Consider a metric $g$ on $\mc{D}$ of the form \eqref{gen.form.m}, satisfying the conditions \eqref{components.m}-\eqref{ass.box}, and consider a wave operator $L_g := \Box_g + a^\alpha \partial_\alpha + V$, with
\begin{equation} \label{thm1.pm.coeff} \begin{aligned}
a^u \in \mc{O} ( v^{-1} r^{-\frac{1}{2}} ) \text{,} &\qquad a^v \in \mc{O} ( (-u)^{-1} r^{-\frac{1}{2}} ) \text{,} \\
a^I \in \mc{O} ( f^\frac{1}{2} r^{-\frac{3}{2}} ) \text{,} &\qquad V \in \mc{O} ( f^{1 + \eta} ) \text{,}
\end{aligned} \end{equation}
for some $\eta > 0$.
Let $\omega > 0$, and consider any $\mathcal{C}^2$-solution $\phi$ on $\mc{D}_\omega$ of the equation $L_g \phi = 0$, which in addition satisfies
\begin{equation} \label{key.assn1.M} \int_{ \mc{D}_\omega } \phi^2 r^N + \int_{ \mc{D}_\omega } | \partial_v \phi |^2 r^N + \int_{ \mc{D}_\omega } | \partial_u \phi |^2 r^N + \sum_{I = 1}^{n-1} \int_{ \mc{D}_\omega } | \partial_I \phi |^2 r^N < \infty \text{,} \end{equation}
for all $N \in \mathbb{N}$.
Then, there exists $0 < \omega^\prime < \omega$ so that $\phi \equiv 0$ in $\mc{D}_{\omega^\prime}$. 
\end{theorem}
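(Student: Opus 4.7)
The plan is to prove Theorem \ref{thm1.pm} via a Carleman estimate whose weight is constructed from the function $f = 1/((-u)v)$, exploiting the enhanced pseudoconvexity provided by the positive-mass assumption $m \geq m_{\min} > 0$.

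\textbf{Step 1: Pseudoconvexity of the level sets of $f$.}
First I would verify that the positive level sets $\{f = c\}$ are uniformly strongly pseudoconvex with respect to $\Box_g$ throughout $\mc{D}_\omega$. Using the structure of $g$ in \eqref{gen.form.m} together with the bounds \eqref{components.m}, a computation of $\nabla^2 f$ restricted to null vectors tangent to $\{f = c\}$ should decompose into a principal piece analogous to the Minkowski calculation plus an additional term proportional to $-m/r$ coming from $K = 1 - 2m/r$. The key observation is that, unlike in Minkowski where pseudoconvexity degenerates near the axes $\{u = 0\}$ and $\{v = 0\}$ (forcing one to anchor at $\mc{I}^\pm_\epsilon$ strictly inside), here the $-2m/r$ contribution, combined with \eqref{ass.m.0}, supplies uniformly strict negativity; this is precisely the geometric reason why the weaker anchoring at $\mc{I}^\pm_{r_0}$ suffices.

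\textbf{Step 2: Carleman estimate.}
With pseudoconvexity established, I would construct a Carleman weight $F = F(f)$ (for instance $F = f^{-p}$ or a logarithmic variant) and prove an estimate of the shape
\[
\sum_{k=0}^{1} \tau^{3-2k}\int_{\mc{D}_\omega} e^{2\tau F}\,|\nabla^k \phi|^2\,r^{-\alpha_k}\,d\mu_g \;\leq\; C\int_{\mc{D}_\omega} e^{2\tau F}\,|\Box_g \phi|^2\,d\mu_g\,,
\]
valid for $\tau$ sufficiently large and $\phi$ compactly supported in $\{0 < f < \omega\}$. The proof follows the classical conjugation scheme: setting $\psi = e^{\tau F}\phi$ and writing $e^{\tau F}\Box_g(e^{-\tau F}\psi) = A_\tau \psi + i S_\tau \psi$ with $A_\tau, S_\tau$ formally self- and skew-adjoint, one expands $\|A_\tau \psi + iS_\tau \psi\|^2$ to extract the commutator $([A_\tau, iS_\tau]\psi, \psi)$, which is positive-definite modulo lower-order errors precisely by Step 1. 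The hypothesis \eqref{ass.box} on $\Box_g(m/r)$ enters to control an error term produced when derivatives of $m$ appear in the commutator computation, and the differential inequality \eqref{uvr} controls the mismatch between $\ud r$ and $\ud v - \ud u$ that would otherwise contaminate the principal symbol.

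\textbf{Step 3: Absorption of lower-order terms and conclusion.}
The decay rates in \eqref{thm1.pm.coeff} are calibrated to the $\tau$-gain in the Carleman estimate: writing $|\Box_g \phi|^2 \leq 2|L_g \phi|^2 + C\sum_\alpha |a^\alpha|^2|\partial_\alpha \phi|^2 + 2|V|^2|\phi|^2 = 2\sum_\alpha |a^\alpha|^2|\partial_\alpha \phi|^2 + 2|V|^2|\phi|^2$, the weights $v^{-1} r^{-1/2}$, $(-u)^{-1} r^{-1/2}$, $f^{1/2} r^{-3/2}$, $f^{1+\eta}$ are exactly those under which the right side is absorbed by the left for $\tau$ large (with the extra $\eta > 0$ in $V$ providing the margin needed to beat the zeroth-order term). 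One then applies the estimate to $\chi\phi$, where $\chi$ is a cutoff that equals $1$ on $\{f > \omega_1\}$ and is supported in $\{f > \omega_2\}$ for some $0 < \omega_2 < \omega_1 < \omega$. The commutator $[\Box_g, \chi]\phi$ is supported in the annulus $\{\omega_2 < f < \omega_1\}$, where $e^{2\tau F}$ is bounded uniformly in $\tau$ by $e^{2\tau F(\omega_2)}$; this contribution is controlled by the infinite-order vanishing assumption \eqref{key.assn1.M}, since $F(f) \to +\infty$ as $f \to 0$ faster than any polynomial in $r$ grows along the same limit. Sending $\tau \to \infty$ forces $\phi \equiv 0$ on $\{f > \omega_1\}$, and one takes $\omega' = \omega_1$.

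\textbf{Principal obstacle.}
The main technical difficulty lies in Steps 1--2 combined: verifying that the pseudoconvex commutator remains positive definite in the presence of the non-constant mass $m$, the off-diagonal metric components $\mu, \nu, c_{Au}, c_{Av}$, and the fact that $r$ itself is no longer a coordinate but only satisfies the implicit relation \eqref{uvr}. Tracking how \eqref{ass.m.0}, \eqref{ass.m.1} and \eqref{ass.box} together suffice to absorb every error term, and confirming that no cancellation in the principal Carleman commutator is spoiled by these perturbations, is the heart of the proof and the point where the analysis differs substantively from the flat case.
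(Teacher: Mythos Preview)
Your outline captures the high-level architecture (pseudoconvexity $\Rightarrow$ Carleman estimate $\Rightarrow$ absorption and cutoff), but there are two substantive gaps.

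\textbf{The conformal inversion is missing.} You propose to run the Carleman machinery directly in the physical metric $g$. The paper does \emph{not} do this, and in fact argues that one cannot: the pseudoconvexity of the level sets of $f$ degenerates toward infinity (it is of order $\Psi \sim (m_{\min}/r)\log r$, not uniformly strict as you suggest), and in the physical picture the resulting error terms in the commutator expansion cannot be absorbed into the positive bulk. The paper's remedy is to pass to the inverted metric $\bar g = K^{-1} f^2 g$, under which $\mc{I}^+_{r_0}\cup\mc{I}^-_{r_0}$ becomes a complete bifurcate null cone from a point and $f = -UV$ in the new null coordinates. All the Carleman analysis (Propositions \ref{thm.carleman_f1}--\ref{thm.carleman_f2}) is carried out for $\Box_{\bar g}$, and only afterward is the equation $L_g\phi=0$ transported to $\bar L_{\bar g}\bar\phi=0$ via the conformal Laplacian. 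The specific form $\bar g = K^{-1}f^2 g$ (rather than, say, the Penrose compactification) is what makes the error absorption go through. Your Step~2 as written would founder on exactly this point: the weights that arise from degenerating pseudoconvexity in the physical picture do not dominate the lower-order errors. Relatedly, the mechanism producing the extra pseudoconvexity is not directly the $-2m/r$ in $K$ but the logarithmic gap $r^\ast - r \geq 2m_{\min}\log r - C$ (derived from \eqref{uvr}), which enters the tangential Hessian components of $f$ through the ratio $(\tilde u+\tilde v)/r$.

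\textbf{The cutoff and vanishing roles are reversed.} Infinity corresponds to $f\to 0$, so the cutoff must equal $1$ for \emph{small} $f$ and vanish near $f=\omega'$; you have this backwards. More importantly, the infinite-order vanishing hypothesis \eqref{key.assn1.M} is \emph{not} used to control the commutator $[\Box,\chi]\phi$ (which lives in a compact annulus where all weights are comparable, and is handled by the standard weight-comparison argument). It is used instead to kill the boundary integrals at infinity that arise when integrating the divergence identity over an exhaustion $D_k\nearrow\mc{D}_{\omega'}$; see \eqref{eq.carleman_f1_ass_vanishing} and the argument around \eqref{seq1}--\eqref{seq3}. Without this, the Carleman inequality for the cut-off function simply does not close.
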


Finally, we present an analogue of Theorem \ref{O1} for this class of metrics.

\begin{theorem} \label{O1.pm}
Consider a metric $g$ on $\mc{D}$ of the form \eqref{gen.form.m}, satisfying the conditions \eqref{components.m}-\eqref{ass.box}, and consider a wave operator $L_g := \Box_g + a^\alpha \partial_\alpha + V$, with
\begin{equation} \label{O1.pm.coeff} \begin{aligned}
a^u \in \mc{O} ( v^{-1} f^{-\frac{1}{3}} r^{-\frac{1}{2}} ) \text{,} &\qquad a^v \in \mc{O} ( (-u)^{-1} f^{-\frac{1}{3}} r^{-\frac{1}{2}} ) \text{,} \\
a^I \in \mc{O} ( f^\frac{1}{6} r^{-\frac{3}{2}} ) \text{,} &\qquad V \in \mc{O} ( 1 ) \text{.}
\end{aligned} \end{equation}
Let $\omega > 0$, and consider any $\mc{C}^2$-solution $\phi$ on $\mc{D}_\omega$ of the equation $L_g \phi = 0$, which in addition satisfies
\begin{equation} \label{key.assn1.O1pm} \int_{ \mc{D}_\omega } \phi^2 e^{ N r^\frac{4}{3} } + \int_{ \mc{D}_\omega } | \partial_v \phi |^2 e^{ N r^\frac{4}{3} } + \int_{ \mc{D}_\omega } | \partial_u \phi |^2 e^{ N r^\frac{4}{3} } + \sum_{I = 1}^{n-1} \int_{ \mc{D}_\omega }| \partial_I \phi |^2 e^{ N r^\frac{4}{3} } < \infty \text{,} \end{equation}
for all $N \in \mathbb{N}$.
Then, there exists $0 < \omega^\prime < \omega$ so that $\phi \equiv 0$ in $\mc{D}_{\omega^\prime}$. 
\end{theorem}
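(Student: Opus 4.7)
The proof will adapt the Carleman-estimate scheme used to establish Theorem \ref{thm1.pm} by inserting an exponential modification of the weight, paralleling the passage from Theorem \ref{thm1} to Theorem \ref{O1}. We keep $f = ((-u)v)^{-1}$ and the domain $\mc{D}_\omega$ from \eqref{fm}. The differential relation \eqref{uvr} combined with \eqref{components.m} shows $r \sim v-u$; on the diagonal $-u \sim v$ this yields $f \sim 4 r^{-2}$, so the weight $e^{p r^{4/3}}$ appearing in the hypothesis \eqref{key.assn1.O1pm} is comparable to $e^{p' f^{-2/3}}$ up to uniform constants. This is the weight we will feed into the Carleman estimate.

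The analytic core is a Carleman estimate of the schematic form
\[
\int_{\mc{D}_\omega} e^{2 F_p}\,|\Box_g \psi|^2 \,\ud g \;\gtrsim\; p^3 \int_{\mc{D}_\omega} e^{2 F_p}\,\bigl(f^{\alpha_0}\,|\partial \psi|_g^2 + f^{\alpha_1}\,\psi^2\bigr)\,\ud g,
\]
for $\psi \in \mc{C}^\infty_c(\mc{D}_\omega)$ and $p$ sufficiently large, with $F_p := p f^{-2/3}$ (equivalently $F_p \sim p r^{4/3}$) and exponents $\alpha_0, \alpha_1$ dictated by the computation. The positivity has two sources: (i) the strong pseudo-convexity of the level sets of $f$ with respect to $\Box_g$, enhanced by the positive-mass lower bound \eqref{ass.m.0} together with \eqref{ass.box} and \eqref{uvr} --- the same mechanism that drives Theorem \ref{thm1.pm}; and (ii) a new bulk contribution of order $p^2$ produced by differentiating the exponential. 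The exponent $4/3$ is what balances these two contributions against a constant potential: it allows the right-hand side to dominate $e^{2 F_p} V^2 \psi^2$ for $V \in \mc{O}(1)$, once $p$ is taken large enough.

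Granted such a Carleman estimate, the proof concludes in the standard way. Pick a smooth cutoff $\chi$ equal to $1$ on $\{f < \omega''\}$ and supported in $\{f < \omega\}$ for some $\omega'' < \omega$. Apply the estimate to $\psi = \chi \phi$ and bound
\[
|\Box_g(\chi \phi)|^2 \lesssim \sum_\alpha |a^\alpha|^2 |\partial_\alpha \phi|^2 + |V|^2 \phi^2 + |[\Box_g, \chi]\phi|^2 .
\]
The decay rates in \eqref{O1.pm.coeff} are tuned precisely so that the first term is absorbed into $p^3 \int e^{2 F_p} f^{\alpha_0} |\partial \phi|^2$; the bounded potential contribution is absorbed into $p^3 \int e^{2 F_p} f^{\alpha_1} \phi^2$; and the commutator $[\Box_g, \chi] \phi$ is supported in the annulus $\{\omega'' \le f \le \omega\}$ where $e^{2 F_p}$ is uniformly bounded in $p$, producing a controlled remainder. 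The weighted integrability assumption \eqref{key.assn1.O1pm} guarantees that the Carleman inequality may be applied to $\chi \phi$ (via truncating $\chi$ to $\{f > \delta\}$ and letting $\delta \to 0$), and that the boundary contributions at $\{f = 0\}$ vanish for every $p$. Letting $p \to \infty$, the inequality forces $\phi \equiv 0$ on $\{f < \omega'\}$ for any $\omega' < \omega''$.

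The main obstacle is the Carleman estimate itself. One must verify that the pseudo-convexity produced by $m \ge m_{\min} > 0$, together with the structural conditions \eqref{ass.m.1} and \eqref{ass.box}, remains compatible with --- and indeed reinforces --- the new $p^2$-large bulk term coming from the exponential modification; the cross terms of order $p$ arising from integration by parts must be absorbed by choosing $p$ large, and the lower-order perturbations allowed by \eqref{components.m} must not destroy this balance. The choice of exponent $4/3$ is dictated by the Meshkov counterexamples \cite{mesh:inf_decay, mesh:diff_ineq}, which are already sharp for bounded potentials in the Euclidean elliptic setting; the positive-mass structure cannot improve upon this rate, so \eqref{key.assn1.O1pm} is essentially optimal.
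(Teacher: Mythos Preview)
Your outline captures the right high-level strategy and matches the paper in its essentials: the reparametrization $F = -f^{-2/3}$ (yielding the Carleman weight $e^{2\lambda f^{-2/3}}$), the pseudo-convexity coming from the positive-mass lower bound, and the standard cutoff/absorption endgame are all exactly what the paper does. One small correction: the comparison $f^{-2/3} \lesssim r^{4/3}$ holds throughout $\mc{D}_\omega$ (from $r^{-2} \lesssim f$, see \eqref{r.inv}), not only on the diagonal $-u \sim v$; this is what actually links the hypothesis \eqref{key.assn1.O1pm} to the vanishing condition needed for the Carleman scheme.

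The genuine gap is that you defer the Carleman estimate to a black box, and in doing so you omit the paper's central technical device: the \emph{conformal inversion} $\bar{g} = K^{-1} f^2 g$. The paper does not derive a Carleman estimate for $\Box_g$ directly. Instead it transforms $L_g\phi = 0$ to $\bar{L}_{\bar{g}}\bar{\phi} = 0$ via the conformal Laplacian, and then verifies (Proposition \ref{calclns}) that on $(\mc{D},\bar{g})$ the function $f$, together with $w = -\tfrac{1}{2}\tfrac{m_{\min}}{r^\ast}\log r^\ast$ and $\Psi = \tfrac{m_{\min}}{r^\ast}\log r^\ast$, satisfies the hypotheses of the abstract Carleman estimate Proposition \ref{thm.carleman_f2} with $q = 2/3$. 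In the inverted picture $f = -UV$ and the metric is close to the model \eqref{inverted.common}; this is what makes $\bar{\nabla}^2 f$ controllable and, crucially, what allows the error terms arising from the degenerating pseudo-convexity to be absorbed into the positive bulk. The paper states explicitly that this inversion is \emph{essential} for the absorption step, and that the obvious alternative (Penrose compactification) does not work. Your sentence ``one must verify that the pseudo-convexity \ldots\ remains compatible with \ldots'' is precisely where the difficulty lies, and working directly on $g$ as you propose gives no indication of how to overcome it.
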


\begin{rmk}
  It turns out that the above class of perturbations \eqref{gen.form.m}, \eqref{components.m} also includes all Kerr metrics (both sub- and super-extremal).
While this is not apparent in the Boyer-Lindquist coordinates, \footnote{Indeed, this cannot be achieved by defining $u, v$ purely in terms of the usual coordinates $t, r$. The obstruction is the coefficient of $\ud r^2$ in these coordinates.} there is a special coordinate transformation, discussed in Appendix \ref{sec:kerr}, which brings the Kerr metrics in the above form.
Roughly, the transformation is designed to undo the $2 \pi$-periodic rotation of the components in Boyer-Lindquist coordinates.
The Kerr metric in these \emph{co-moving} coordinates is then \emph{one order closer} to the Schwarzschild metric in terms of powers of $r^{-1}$ and hence satisfies \eqref{components.m}.
\end{rmk}

\begin{rmk}
As we will explain in more detail in the next section, the moral reason behind this weaker vanishing assumption in Theorem \ref{thm1.pm} is precisely the \emph{extra} convexity (towards null infinity) of certain null geodesics in Schwarzschild compared to Minkowski.
The pseudo-convexity of the above function $f_{r_0}$ directly implies that any rotational null geodesic in Schwarzschild which is $\delta$-close to $\iota^0$ (in the inverted picture, with respect to the inverted null coordinates $U$, $V$) \footnote{See the next section.} will in fact intersect both $\mc{I}^+, \mc{I}^-$ $\sqrt{\delta}$-close to $\iota^0$.
This is a manifestation of the blow-up of the $\rho$-component of the Weyl curvature in the inverted metric.
\end{rmk}

\begin{rmk}
We note that an analogue of our theorems for free wave equations on static warped product backgrounds (which fall under the zero-mass class considered in Thm.~\ref{thm1}) has been derived in \cite{sa-barreto:support} using the Radon transform; see earlier work \cite{helg:radon} for free waves on the Minkowski background.\footnote{As discussed in Remark~\ref{rmk:penrose}, in the absence of pseudo-convexity of some sort, the presence of  time-dependent lower-order terms in the equation would not allow for the generalization of these results, even on the Minkowski background.}
Analogous results were recently obtained in \cite{baskin-wang:rad} on the Schwarzschild space-time for spherically symmetric waves which are trivial near $i^0$. 
\end{rmk}

\subsection{Discussion of the Ideas}

The proof of all the above theorems will be based on new Carleman estimates. 
Such estimates are a common tool in unique continuation problems.
In fact, we derive our estimates in a uniform way, essentially proving all theorems above together.

The approach we use in deriving our Carleman estimates follows the method adopted in \cite{IK:illposed}, 
based on energy currents associated to the energy-momentum tensor of the wave equations under consideration.
There are several challenges that we must overcome in deriving Carleman estimates in our setting, 
which generally arise from the geometry of infinity in asymptotically Minkowskian spaces. 
We highlight some of these in the next section which deals only with the Minkowski and Schwarzschild space-times as 
model cases. It is useful to synopsize all of them here, however: 
 
\begin{itemize}
\item \emph{Degenerating pseudo-convexity:} The first step in deriving our estimates 
is to construct a function $f$ for each background whose level sets are pseudo-convex. 
These will be the functions $f_\e, f_{r_0}$ defined above (we collectively denote these by $f$). 
A first difficulty arises here, in that the pseudo-convexity of the level sets of $f$ {\it degenerates} towards infinity. 

\item \emph{A conformal inversion:} Partly forced by the methods for deriving Carleman estimates, we consider a conformal transformation of the domains $\mc{D}$ where we seek to derive the vanishing of the solutions of \eqref{wave}, so as to convert the null infinities $\mc{I}^+_\e, \mc{I}^-_\e$, etc. above into boundaries at finite (affine) distance.
It is natural to expect one has the freedom to perform 
such conformal transformations in view of the conformal invariance of null geodesics, whose geometry is the 
key to the pseudo-convexity requirement. 

The standard conformal transformation in our setting would be the Penrose conformal compactification.
However, it turns out that we \emph{can not} derive our Carleman estimates with that tool.
To overcome this, we consider the equation \eqref{wave} above with respect to the new metric $\bar{g} := K^{-1} f^2 g$.
This is actually a very natural transformation, and should be thought of as a warped version of the  conformal 
{\it isometry} of the Minkowski space-time, $g_{\rm M} \to u^{-2} v^{-2} g_{\rm M}$.
In this setting, the null infinities $\mc{I}^+_\e, \mc{I}^-_\e$ are transformed to a \emph{complete} double null cone with vertex at $\iota^0$.
One has a certain extra convexity near these cones in our picture (see the next section), which makes our result possible.  

\item \emph{Reparametrizations:} After this inversion, certain aspects of the analysis resemble the strong unique continuation discussed above.
In particular, it turns out it is necessary to work not with the function $f$,
but a new function $F (f)$ with the same level sets that ``accelerates'' away from $\mc{I}^\pm_\e$ faster.
It is this choice of function $F$ (given the foliation by the level sets of $f$) that differentiates between Theorems \ref{thm1}, and \ref{thm1.pm} from 
Theorems \ref{O1} and \ref{O1.pm}.\footnote{In choosing $F(f)$ correctly (in the conformally inverted picture) for Theorem \ref{thm1}, we were guided by \cite{ik:private}.}

\item \emph{Absorption of Error Terms:} To derive our weighted $L^2$-estimates, various error terms that arise in the analysis must be absorbed into the main terms.
This can be done readily in the setting of Theorem \ref{uniq.H}, where the initial surface $\Sigma$ is smooth and strongly pseudo-convex.
However in our setting the degeneration of the pseudo-convexity makes this very delicate, due to the presence of weights 
in the Carleman estimates that vanish/blow-up at different rates towards the boundary.  
Our choice of the conformal inversion is essential here in ensuring that error terms {\it can} be absorbed. 
\end{itemize}
 
\subsection{Outline of the Paper}

For the reader's convenience, we start with a separate Section \ref{sec:psc}, where the pseudo-convexity
 of the functions $f_\e, f_{r_0}$ is derived in the model Minkowski and Schwarzschild space-times.
In particular, the \emph{stronger} pseudo-convexity that the Schwarzschild space-times exhibit becomes apparent.
We also briefly present the Carleman estimates we will be deriving in those space-times.

In Section \ref{sec:carleman}, we derive the Carleman estimates needed for our theorems.
The estimates we derive here are adapted to the conformally inverted metric, and allow for the degenerating pseudo-convexity. 
 
In Section \ref{sec:massloss}, we prove our results
 in a uniform way for all Theorems \ref{thm1}, \ref{O1}, \ref{thm1.pm}, and \ref{O1.pm} together.
We first transform the operators under consideration to new operators in the conformally inverted picture. 
We then show that the assumptions of the Carleman estimates, Propositions \ref{thm.carleman_f1} and \ref{thm.carleman_f2}, are satisfied in this inverted setting.
We end the section with the (standard) proof that a Carleman estimate implies the desired vanishing.

Finally, in Appendix \ref{sec:kerr}, we discuss co-moving coordinates for the 
Kerr exteriors, which show that they fall under the assumptions of Theorem \ref{thm1.pm}.



\section{The Model Space-times} \label{sec:psc}

Since the proof of our theorems (presented in Sections \ref{sec:carleman} and \ref{sec:massloss}) somewhat conceals the role of the underlying geometry in our results, we present here some key constructions in the two basic space-times to which our theorems apply: the Minkowski and Schwarzschild space-times. 


\subsection{Pseudo-convexity in Minkowski Space-time}
\label{sec:minkowski:pseudo}


 In double null coordinates, the Minkowski metric takes the form
\begin{equation}
  g=-4\ud u\ud v+r^2\g_{AB}\ud y^A\ud y^B\,,\qquad r(u,v)=v-u\,,
\end{equation}
where the outgoing and ingoing null hypersurfaces are precisely the level sets of the optical functions $u$, $v$, respectively, and the hyperboloids $\mc{H}_C$ are expressed as 
\begin{equation}
  \label{eq:v:M:hyperboloids}
  -uv =  C > 0\,,
\end{equation}
intersecting the null infinities at the endpoints of the asymptotes $u=0$ and $v=0$; cf.~Figure~\ref{fig:minkowski:pseudoconvexity}.
We recall the function
\[ f= f_\e=\frac{1}{(-u+\e)(v+\e)} \text{,} \]
whose level sets can be thought of as  perturbations of the hyperboloid \eqref{eq:v:M:hyperboloids} which asymptote to $u=\epsilon$ and $v=-\epsilon$.

\begin{figure}[bt]
  \centering
  \includegraphics{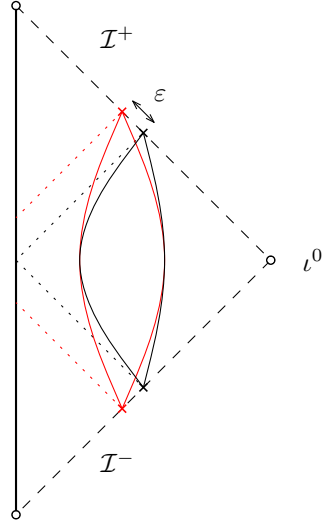}
  \caption{Pseudoconvexity in Minkowski space: The null geodesics (black) tangential to the level sets of $f$ (red) remain in the outer component.}
  \label{fig:minkowski:pseudoconvexity}
\end{figure}

We shall show that there exists a function $h$ such that
\begin{equation}
  \label{eq:v:M:pi}
  \pi=h \:g-\nabla^2 f
\end{equation}
restricted to the tangent space of the level sets of $f$ is strictly positive-definite.
This positivity encodes the pseudo-convexity and plays a key role in our Carleman estimates.
 We check the positivity of $\pi$ in a  suitable frame.

\subsubsection*{Orthonormal frame}

We introduce an orthonormal frame $(N,T,E_1,\dots, E_{n-1})$ ad-apted to the level sets of $f$. 
Let $(E_1,\dots, E_{n-1})$ be an orthonormal frame on the sphere:
\begin{equation}
   r^2\g (E_A,E_B)=\delta_{AB}.
\end{equation}
Clearly, the (timelike future-directed) unit vector tangent to the level sets of $f_\e$ and orthogonal to  $E_1,\dots, E_{n-1}$ is then
\begin{equation}
  \label{eq:v:M:frame}
  T=\frac{1}{2}\frac{1}{\sqrt{f}}\frac{1}{v+\varepsilon}\frac{\partial}{\partial u}+\frac{1}{2}\frac{1}{\sqrt{f}}\frac{1}{-u+\varepsilon}\frac{\partial}{\partial v}\,,
\end{equation}
and the (spacelike) unit normal $N$ to our level sets is given by
\begin{equation}
  N=\frac{1}{2}\frac{1}{\sqrt{f}}\frac{1}{v+\varepsilon}\frac{\partial}{\partial u}-\frac{1}{2}\frac{1}{\sqrt{f}}\frac{1}{-u+\varepsilon}\frac{\partial}{\partial v}.
\end{equation}

\subsubsection*{Pseudo-convexity}

Since
\begin{subequations}
\begin{gather}
  \h{f}{uu}=\frac{2f}{(-u+\varepsilon)^2}\,,\quad
  \h{f}{uv}=-f^2\,,\quad
  \h{f}{vv}=\frac{2f}{(v+\varepsilon)^2}\,,\\ 
  \h{f}{uA}=0\,,\quad
  \h{f}{AB}=-f^2\frac{r}{2}\bigl(r+2\varepsilon\bigr)\g_{AB}\,,
\end{gather}
\end{subequations}
we find that
\begin{gather}
  (\nabla^2 f)(T,T)=\frac{1}{2} f^2\,,\quad (\nabla^2 f)(T,N)=0\,,\quad (\nabla^2 f)(N,N)=\frac{3}{2}f^2\,,\\
  (\nabla^2 f)(E_A,E_B)=-\frac{1}{2}f^2\delta_{AB}-\frac{\varepsilon}{r}f^2\delta_{AB}\,,\quad (\nabla^2 f)(E_A, T)=0  \,.
\end{gather}
Thus, 
choosing
\begin{equation}
  \label{eq:v:M:h}
  h=-\frac{1}{2}f^2-\frac{1}{2}\frac{\varepsilon}{r}f^2\,,
\end{equation}
we ensure that the tensor $\pi$ is diagonal with respect to $(N,T,E_1,\dots, E_{n-1})$, and positive on the level sets of $f$:
\begin{gather}
  \label{eq:v:M:f:pseudoconvexity}
  \pi(T,T)=\frac{1}{2}\frac{\varepsilon}{r}f^2> 0\\
  \pi(E_A,E_B)=\frac{1}{2}\frac{\varepsilon}{r}f^2\delta_{AB} > 0\,.
\end{gather}
These formulas make precise the qualitative picture of Fig.~\ref{fig:minkowski:pseudoconvexity}, namely, that the pseudo-convexity is stronger the larger $\epsilon$ is, but also degenerates as the level sets approach the null hypersurfaces at infinity.
\footnote{In \cite{IK:illposed}, the above foliation by level sets of $f_\e$ with $\epsilon<0$ is used to prove a 
unique continuation result for an ill-posed characteristic problem for linear wave equations with continuous coefficients. 
(The function $f$ is then pseudo-convex in the opposite direction.)
Their data is prescribed on a bifurcate null hypersurface in Minkowski space, emanating from a sphere of radius $r_0>0$ on $u+v=0$. The surfaces $(-u+\e)(v+\e)=c$ for any $-r_0/2<\epsilon<0$ foliate its exterior while intersecting the null 
hypersurfaces for each value of $c>(r_0/2+\epsilon)(r_0/2+\epsilon)$.
Note that as the bifurcation sphere is shrunk to a point, $r_0\to 0$, the pseudoconvexity of the foliation degenerates,
 as $\epsilon\to 0$ by construction. The authors of \cite{IK:illposed} have a separate unique continuation result for the case $\epsilon=0$, 
 which they generously shared with us, \cite{ik:private}.}

  Since the pseudo-convexity as defined in (\ref{pseudo.cvx}) is a conformally invariant property,
  inspired by the conformal inversion of Minkowski space-time, we define:
\begin{equation}
  \label{eq:v:M:inversion}
  \bar{g}=f^2 g.
\end{equation}
Upon introducing the new coordinates 
$U=-(-u+\e)^{-1}$, $V=(v+\e)^{-1}$,
we find
\begin{equation}
  \label{eq:v:M:inverted}
  \bar{g}=-4\,\ud U\ud V+f^2r^2\g\,,
\end{equation}
and see that the conformal transformation \eqref{eq:v:M:inversion} in fact represents an inversion that maps the level sets of $f$ to hyperbolas in the $(U,V)$-plane, cf.~Figure~\ref{fig:minkowski:inversion}:
\begin{equation}
  f=-UV\,.
\end{equation}

\begin{figure}[bt]
  \centering
  \includegraphics{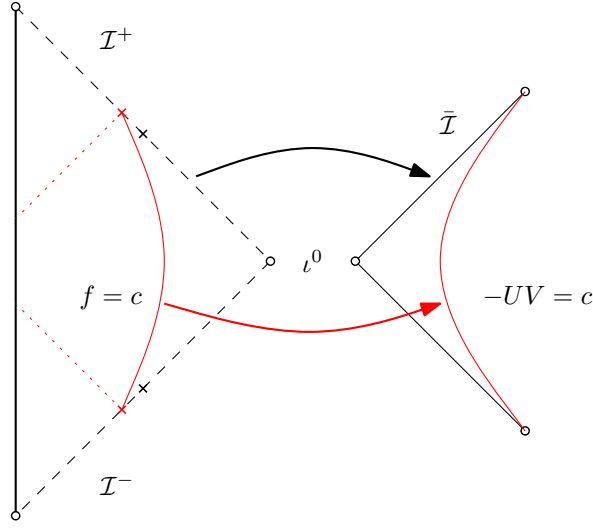}
  \caption{Depiction of the warped inversion of Minkowski space.}
  \label{fig:minkowski:inversion}
\end{figure}

\subsubsection*{Pseudo-convexity in the Inverted Picture.}

We recall the transformation law of the Hessian of a  function under conformal rescalings.
In general, if $\ol{g}=\Omega^2g$ and $\ol{\nabla},\nabla$ are the Levi-Civita connections corresponding to $\ol{g}, g$, respectively, then: 
 \begin{multline}
  \label{eq:hessian:transformation}
  \ol{\nabla}^2_{\mu\nu}f=\h{f}{\mu\nu}\\+\frac{\Omega^2}{2}\,\partial_\mu\bigl(\Omega^{-2}\bigr)\,\partial_\nu f+\frac{\Omega^2}{2}\,\partial_\nu\bigl(\Omega^{-2}\bigr)\,\partial_\mu f-\frac{\Omega^2}{2}\,g^{\alpha\beta}\partial_\alpha\bigl(\Omega^{-2}\bigr)(\partial_\beta f)\,g_{\mu\nu}\;.
\end{multline}
Thus, evaluating this formula on tangential null vectors $X$, $X(f)=0$, we see that the level sets of $f$ are {\it still} pseudoconvex with respect to $\ol{g}$.
Furthermore, given the conformal inversion \eqref{eq:v:M:inversion}, we can simply define
\begin{equation}
  \ol{h}:=h f^{-2}+1
\end{equation}
to ensure that $\ol{\pi}=\ol{h}\,\ol{g}-\ol{\nabla}^2f$ is positive when restricted to the level sets of $f$. 
In fact, with respect to the orthonormal frame for $\ol{g}$,
\begin{equation}
  \bar{N}:=f^{-1}N\,,\quad \bar{T}:=f^{-1}T\,,\quad \bar{E}_A:=f^{-1}E_A
\end{equation}
we then immediately obtain:
\begin{equation}
  \ol{\pi}(\ol{T},\ol{T})=\frac{1}{2}\frac{\varepsilon}{r}\,,\quad
  \ol{\pi}(\ol{E}_A,\ol{E}_B)=\frac{1}{2}\frac{\varepsilon}{r}\delta_{AB}\,.
\end{equation} 
Moreover all off-diagonal terms of $\ol{\pi}$ vanish in this frame.

\subsection{Pseudo-convexity in the Schwarzschild Exterior}

We now discuss the pseudo-convexity properties of the function $f_{r_0}$ defined in \eqref{fr} in the Schwarzschild space-times. 
It turns out that the behavior of null geodesics is here substantially different, yielding \emph{stronger} (pseudo-)convexity for the level sets of $f_{r_0}$, and ultimately a stronger Carleman estimate.

\subsubsection*{Inversion.}

Recall the metric \eqref{eq:schwarzschild:intro} for the Schwarzschild exterior in double null coordinates.
Here, $u=0$ and $v=0$ are fixed by a choice of $r_0>2m_S$, and
\begin{equation}
  \label{eq:v:S:f}
 f= f_{r_0}=\frac{1}{(-u)v}.
\end{equation}
We now consider the inverted metric
\begin{equation}
  \label{eq:v:S:g:c}
  \gb=\frac{1}{1-\frac{2m}{r}}f^2g\,,
\end{equation}
on the domain $\mathcal{D}=\{(u,v)\::u<0\,,v>0\}$.

Introducing coordinates $U=u^{-1}$, $V=v^{-1}$, 
the metric $\bar{g}$ takes the form:
\begin{equation}
  \label{eq:v:S:g:i}
  \gb=-4\,\ud U\ud V+ \frac{f^2 r^2}{1-\frac{2m}{r}} \cdot \g\,, 
\end{equation}
and $f$ takes the same form as in the Minkowski setting:
\begin{equation}  \label{eq:v:S:f:i}  
   f=-UV\,.
\end{equation}
We now proceed to show in the inverted space-time that the level sets of $f$ are indeed pseudo-convex.

\subsubsection*{Hessian of $f$}

We  calculate:
\begin{gather}
  \Hb{UU}f=0\,,\quad
  \Hb{UV}f=-1\,,\quad
  \Hb{VV}f=0\,,\quad
  \Hb{U A}f=0\,,\quad\Hb{V A}f=0\,,
\end{gather}
while the non-trivial contribution is now contained in:
\begin{equation}
  \begin{split}
  \Hb{AB}{f}
  &=\frac{1}{2}(U-V)\,fr\,\g_{AB}+\frac{f^2r^2}{1-\frac{2m}{r}}\g_{AB}+\frac{1}{4}\frac{f^2r^2}{1-\frac{2m}{r}}\frac{2m}{r^2}\Bigl(\frac{1}{V}-\frac{1}{U}\Bigr)\\
  &=-\frac{1}{2}\frac{\rs}{r}\gb_{AB}+\gb_{AB}+\frac{3}{4}\frac{2m}{r}\frac{\rs}{r}\gb_{AB}\,,\qquad r^\ast=v-u\,.
\end{split}
\end{equation}
\subsubsection*{Orthonormal Frame}

 We construct a frame $(\bar{N},\bar{T}, \bar{E}_1,\dots, \bar{E}_{n-1})$ which is analogous to the one 
 in the (inverted) Minkowski space-time: 
\begin{equation}
  \Nb=\frac{1}{2}\frac{1}{\sqrt{f}}\Bigl[U\,\partial_{U}+V\,\partial_{V}\Bigr]\qquad \Tb=\frac{1}{2}\frac{1}{\sqrt{f}}\Bigl[-U\,\partial_{U}+V\,\partial_{V}\Bigr]
\end{equation}
and $\bar{E}_i$ tangent to the spheres $\mc{S}_{U,V}$ 
satisfying  
\begin{equation}
  \gb(\bar{E}_A,\bar{E}_B)=R^2\g(\bar{E}_A,\bar{E}_B)=\delta_{AB}\,.
\end{equation}
Then we obtain: 
\begin{subequations}
\begin{gather}
  \Hb{\Tb\Tb}f=-\frac{1}{2}\,,\qquad\Hb{\Nb\Nb}f=\frac{1}{2}\,,\qquad 
  \Hb{\Tb\Nb}f=0\label{eq:v:S:Hf:TN}\,,\\
  \Hb{\bar{E}_A\bar{E}_B}f=-\frac{1}{2}\frac{\rs}{r}\delta_{AB}+\delta_{AB}+\frac{3}{4}\frac{2m}{r}\frac{\rs}{r}\delta_{AB}\,,\qquad \Hb{\bar{E}_A\Tb}=0\,.
\end{gather}
\end{subequations}

\subsubsection*{Pseudoconvexity}

Let then
\begin{gather}
  \bar{h}=\frac{1}{2}-\frac{1}{4}\Bigl(\frac{\rs}{r}-1\Bigr)+\frac{3}{8}\frac{2m}{r}\frac{\rs}{r}\,,\\  
  \bar{\pi}= \bar{h} \gb-\Hb{}f\,. \label{eq:v:S:pi}
\end{gather}
As previously discussed, the positivity of $\bar{\pi}$ restricted to the orthogonal complement of $\bar{N}$ signals the pseudo-convexity of the level sets of $f$.
We find
\begin{gather}
  \pi_{\Tb\Tb}=\frac{1}{4}\frac{2m}{r}\log\lvert r-2m\rvert-\frac{3}{8}\frac{1}{r}\Bigl(\frac{2m}{r}\rs+\frac{2}{3}r_0^\ast\Bigr)\\
  \pi_{AB}=\frac{1}{4}\frac{2m}{r}\log\lvert r-2m\rvert\:\delta_{AB}-\frac{3}{8}\frac{1}{r}\Bigl(\frac{2m}{r}\rs+\frac{2}{3}r_0^\ast\Bigr)\delta_{AB}\,.
\end{gather}
In particular, irrespective of how large $r_0^\ast$ is chosen, all tangential components are positive for $r$ large enough (depending on the choice of $r_0^\ast$).
This is what allows  unique continuation to hold in an arbitrarily small neighborhood of spatial infinity 
for Schwarzschild space-times of positive mass $m_{\rm S} > 0$.

\subsection{Reparametrizations and Carleman Estimates}

For both of our model spacetimes, after the conformal inversion, we have a metric of the common form
\begin{equation} \label{inverted.common} \bar{g} = - 4 \ud U \ud V + \mc{R}^2 \g_{AB} \ud y^A \ud y^B \text{,} \end{equation}
while the foliating functions $f_\e$ and $f_{r_0}$ are now given by $f = -UV$.

Consider a smooth function $\phi$ on the (inverted) space-time.
The process for deriving the Carleman estimates for $\phi$ roughly follows the geometric method in \cite{IK:illposed}: we can compare this to an energy estimate for $\psi = e^{ -\lambda F (f) } \phi$, where $F (f)$ denotes an appropriate reparametrization of the level sets of $f$.
More precisely, one integrates the divergence of a modified energy current,
\begin{equation}
  \label{eq:v:S:current}
  J^w_\beta [\psi]= Q_{\alpha\beta}[\psi] \bar{\nabla}^\alpha f +\frac{1}{2} ( \partial_\beta w) \cdot \psi^2 -\frac{1}{2} w \cdot \partial_\beta (\psi^2) + P^\flat_\beta \text{,}
\end{equation}
where $Q [\psi]$ denotes the standard energy-momentum tensor for the wave equation (see \eqref{eql.Q}).
In particular, one utilizes the gradient of $f$ as a multiplier vector field in \eqref{eq:v:S:current}.
In contrast to the usual energy estimates, here we wish for \emph{the bulk terms of the integrals to be positive} and for \emph{the boundary terms to vanish}.

By choosing $w$ in \eqref{eq:v:S:current} appropriately, depending on $h$ in the preceding discussions (see \eqref{eq.carleman_f_psf}) and on $f$, the divergence of \eqref{eq:v:S:current} will produce precisely the tensors $\bar{\pi}$ from the previous discussions, capturing the pseudo-convexity of the level sets of $f$.
This pseudo-convexity produces positive bulk terms that are quadratic in $\bar{\nabla}_{ \bar{T} } \psi$ and $\bar{\nabla}_{ \bar{E}_a } \psi$, i.e., the derivatives of $\psi$ in directions tangent to the level sets of $f$.

On the other hand, to obtain positivity for the normal derivative $\bar{\nabla}_{ \bar{N} } \psi$ and for $\psi$ itself, one relies on the choice of reparametrization $F (f)$ of $f$.
From computations, one can see that at least a logarithmic blowup of $F (f)$ as $f \searrow 0$ is necessary.
However, the apparent choice $F (f) := \log f$ (which would correspond to the decaying potential cases of Theorems \ref{thm1} and \ref{thm1.pm}) does \emph{not} suffice to produce sufficiently positive weights; to obtain the desired results, one adds a small extra acceleration to the above $F (f)$; see \eqref{eq.F1}.
We also note that the bounded potential cases of Theorems \ref{O1} and \ref{O1.pm} correspond to the reparametrization $F (f) = - f^{-2/3}$.

The above ultimately results in an inequality of the form
\begin{align}
\label{eq.model_algebraic_integral} \int_{ \mc{D}_{\omega^\prime} } \mc{W}_L | \mc{L} \psi |^2 &\geq C \lambda \int_{ \mc{D}_{\omega^\prime} } \left( \mc{W}_N | \nablai_{ \bar{N} } \psi |^2 + \mc{W}_T | \bar{\nabla}_{ \bar{T} } \psi |^2 + \mc{W}_T \sum_{a = 1}^{n - 1} | \nablai_{ \bar{E}_a } \psi |^2 \right) \\
\notag &\qquad + C \lambda^3 \int_{ \mc{D}_{\omega^\prime} } \mc{W}_0 \cdot \psi^2 + \int_{ \mc{D}_{\omega^\prime} } \mc{E} \text{,}
\end{align}
where $\mc{D}_{\omega^\prime}$ denotes the region $\{ 0 < f < \omega^\prime \}$ for some sufficiently small $\omega^\prime > 0$, and where $\mc{L}$ is the conjugated wave operator
\[ \mc{L} = e^{- \lambda F (f) } \bar{\Box} e^{ \lambda F (f) } \text{.} \]
Moreover, $\mc{W}_L$, $\mc{W}_N$, $\mc{W}_T$, and $\mc{W}_0$ are positive weights that depend on the pseudo-convexity of the level sets of $f$ and the reparametrization $F$.
The only term in \eqref{eq.model_algebraic_integral} that is not positive is the integral over the ``error" $\mc{E}$, which must be absorbed into the remaining positive terms.
That this is possible depends largely on the specific common forms for $\bar{g}$ and $f$ in the inverted settings.


By expressing $\psi$ back in terms of $\phi$, and by appropriately controlling the error terms, we obtain the desired Carleman estimates.
The exact estimate depends on the amount of pseudo-convexity in the level sets of $f$,\footnote{This is encoded in the function $\Psi$ in Propositions \ref{thm.carleman_f1} and \ref{thm.carleman_f2}.}
 as well as on the chosen reparametrization $F$.
For example, in the case of wave equations on Schwarzschild space-times with decaying potential, we have the following:
\begin{align*}
\int_{ \mc{D}_{ \omega^\prime } } f^{-2 \lambda + 1} e^{2 \lambda f^p} | \bar{\Box} \phi |^2 &\gtrsim \lambda \int_{ \mc{D}_{ \omega^\prime } } f^{-2 \lambda + 1} e^{2 \lambda f^p} \cdot f^{-1 + p} | \bar{\nabla}_{ \bar{N} } \phi |^2 \\
\notag &\qquad + \lambda \int_{ \mc{D}_{ \omega^\prime } } f^{-2 \lambda + 1} e^{2 \lambda f^p} \cdot \frac{\log r}{f r} \left( | \bar{\nabla}_{ \bar{T} } \phi |^2 + \sum_{a = 1}^{n - 1} | \bar{\nabla}_{ \bar{E}_a } \phi |^2 \right) \\
\notag &\qquad + \lambda^3 \int_{ \mc{D}_{ \omega^\prime } } f^{-2 \lambda + 1} e^{2 \lambda f^p} \cdot f^{-2 + p} \phi^2 \text{.}
\end{align*}
Here, $p$ is a small positive constant; see \eqref{eq.F1}.

Finally, we remark that the vanishing assumption required on $\phi$ in order for the relevant boundary terms to vanish depends again on the choice of $F$.
In the case $F (f) = \log f + \text{correction}$ (for Theorems \ref{thm1} and \ref{thm1.pm}), the requirement is that $\phi$ vanishes at a \emph{superpolynomial} rate in terms of $f$.
Similarly, when $F (f) = - f^{-2/3}$ (for Theorems \ref{O1} and \ref{O1.pm}), then $\phi$ must vanish at a \emph{superexponential} rate.

\section{Carleman Estimates} \label{sec:carleman}

In this section, we establish the general Carleman estimates that will be used to prove our results for all the spacetimes under consideration in this paper.

Although the setting we introduce is abstract, in order to prove all our results in a uniform way, the reader should keep in mind that the space-times we consider will be \emph{conformal inversions} of the original, physical space-times to which Theorems \ref{thm1}, \ref{O1}, \ref{thm1.pm}, and \ref{O1.pm} refer.
The key point behind the general estimates in this section is that the machinery outlined in Section \ref{sec:psc} is \emph{robust}, in the sense that the analysis goes through for sufficiently mild perturbations of $\bar{g}$ of the form \eqref{inverted.common}.
The closeness properties are expressed in terms of specially adapted frames.

\subsubsection*{Preliminaries}

Our estimates will be for the wave operator $\Boxi = \Boxi_\gi$, for an incomplete $(n+1)$-dimensional Lorentz manifold $(\mc{D}, \gi)$.
As discussed earlier, these are weighted $L^2$-estimates; a key ingredient of the weight will be a pseudo-convex function $f \in \smooth{\mc{D}}$.
We assume that the level sets of $f$ are timelike, i.e.,
\begin{equation} \label{eq.f_timelike} \nablai^\alpha f \nablai_\alpha f = \gi^{\alpha\beta} \nablai_\alpha f \nablai_\beta f > 0 \text{.} \end{equation}

Moreover, we assume at each point of $\mc{D}$, there is a local frame, $(\Ei{0}, \dots, \Ei{n})$, which is ``adapted to $f$", that is:
\begin{itemize}
\item The frame is orthonormal, with
\begin{equation} \label{eq.carleman_f_m} g ( \Ei{\alpha}, \Ei{\beta} ) = m_{\alpha\beta} \text{,} \qquad [ m_{\alpha\beta} ]_{\alpha, \beta = 0}^n := \operatorname{diag} (-1, 1, \dots, 1) \text{.} \end{equation}

\item $\Ei{0}, \dots, \Ei{n-1}$ are tangent to the level sets of $f$, and $\Ei{n}$ (which is normal to the level sets of $f$), satisfies $\Ei{n} f > 0$.
\end{itemize}
In particular, these are analogues of the frames $(\bar{T}, \bar{E}_1, \dots, \bar{E}_{n-1}, \bar{N})$ in Section \ref{sec:psc}.
They also provide a natural way to measure tensor fields on $(\mc{D}, \gi)$:
\begin{itemize}
\item Given a vector field $X$ on $\mc{D}$, we define
\begin{equation} \label{eq.norm_vf} | X |^2 := \sum_{\alpha = 0}^n [ g ( X, \Ei{\alpha} ) ]^2 \text{.} \end{equation}

\item Similarly, for a covariant $k$-tensor $A$ on $\mc{D}$, we define
\begin{equation} \label{eq.norm_cov} | A |^2 := \sum_{\alpha_1, \dots, \alpha_k = 0}^n | A ( \Ei{\alpha_1}, \dots, \Ei{\alpha_k} ) |^2 \text{.} \end{equation}
\end{itemize}

For technical reasons related to the vanishing assumptions required for our Carleman estimates, we make the following definition: 
a sequence $(D_k)$ of compact subsets of $\mc{D}$ is called an \emph{exhaustion of $\mc{D}$} if the following conditions hold:
\begin{itemize}
\item The $D_k$'s are increasing: $D_k \subset D_{k+1}$ for each $k$.

\item For each $n$, the boundary $\partial D_k$ can be written as a finite union of smooth space-like and time-like 
 hypersurfaces of $\mc{D}$. 
\end{itemize}

Finally, given a function $w \in \smooth{\mc{D}}$, we define the corresponding quantity
\begin{equation} \label{eq.carleman_f_psf} h^{ (w) } = h := w + \frac{1}{2} \Box f - \frac{n-1}{4} \in \smooth{\mc{D}} \text{,} \end{equation}
In particular, $h$ will be the factor connected to the pseudo-convexity of the level sets of $f$, in that we wish for the restriction of $h \cdot \gi - \nablai^2 f$ to the level sets of $f$ to be nonnegative-definite (see the discussion in Section \ref{sec:psc}).

\subsubsection*{The Main Estimates}

With the above background and definitions in place, we are now prepared to state our two main Carleman estimates.
In the statements below, and also in the upcoming proofs, we will use the notation $A \simeq B$ to mean that $A \leq c B$ and $B \leq c A$ for some constant $c > 0$.

The first Carleman estimate corresponds to functions vanishing superpolynomially with respect to $f$; this is used for Theorems \ref{thm1}, and \ref{thm1.pm}.

\begin{proposition} \label{thm.carleman_f1}
Let $(\mc{D}, \gi)$ and $f$ be as above, with $f$ sufficiently small on $\mc{D}$, and fix orthonormal frames $(\Ei{0}, \dots, \Ei{n})$ adapted to $f$, in the above sense, which cover all of $\mc{D}$.
Furthermore, fix a constant $p > 0$, and fix $\Psi, w \in \smooth{\mc{D}}$, with $\Psi$ satisfying
\begin{equation} \label{eq.carleman_f1_ass_gap} 0 \leq \Psi \ll f^p \text{.} \end{equation}
Assume the following conditions hold:
\begin{itemize}
\item For any vector field $X$ tangent to the level sets of $f$,
\begin{equation} \label{eq.carleman_f1_ass_pseudoconvex} ( - \nablai^2 f + h \cdot \gi )(X, X) \simeq \Psi \cdot | X |^2 \text{.} \end{equation}

\item The following bounds hold for $f$:
\footnote{In particular, for the model spacetimes of Section \ref{sec:psc}, the left-hand side of \eqref{eq.carleman_f1_ass_f_strong} vanishes entirely, as does the quantity $| \nabla^2_{ \Ei{n} \Ei{n} } f - \frac{1}{2} |$ in \eqref{eq.carleman_f1_ass_f_weak}.}
\begin{align}
\label{eq.carleman_f1_ass_f_strong} | f^{-\frac{1}{2}} \nablai_{\Ei{n}} f - 1 | + \sum_{i = 0}^{n-1} | \nablai^2_{\Ei{n} \Ei{a}} f | \ll \Psi \text{,} \\
\label{eq.carleman_f1_ass_f_weak} \left| \Boxi f - \frac{n+1}{2} \right| + \left| \nablai^2_{\Ei{n} \Ei{n}} f - \frac{1}{2} \right| \ll f^p \text{.}
\end{align}

\item $w$ satisfies the following estimates:
\begin{equation} \label{eq.carleman_f1_ass_w} | w | \ll f^p \text{,} \qquad | \Boxi w | \lesssim f^{p - 1} \text{.} \end{equation}
\end{itemize}
Let $\phi \in \smooth{\mc{D}}$, such that it satisfies the following vanishing condition:
\begin{itemize}
\item For each $N > 0$, there exists an exhaustion $(D_k)$ of $\mc{D}$ such that, if $\nu_k$ is a unit normal for the components of $\partial D_k$, then
\footnote{Here, the integral is with respect to the volume forms of the components of $\partial D_k$.}
\begin{equation} \label{eq.carleman_f1_ass_vanishing} \lim_{k \nearrow \infty} \int_{ \partial D_k } f^{-N} e^{N f^p} ( | \nu_k | + | \nablai_{ \nu_k } w | ) ( \phi^2 + | \nablai \phi |^2 ) = 0 \text{.} \end{equation}
\end{itemize}
Then, for sufficiently large $\lambda > 0$, the following estimate holds for $\phi$:
\begin{equation} \label{eq.carleman_f1} \begin{aligned}
\int_{ \mc{D} } f^{-2 \lambda + 1} e^{2 \lambda f^p} \cdot | \Box \phi |^2 &\gtrsim \lambda \int_{ \mc{D} } f^{-2 \lambda + 1} e^{2 \lambda f^p} \cdot f^{p - 1} | \nablai_{ \Ei{n} } \phi |^2 \\
&\qquad + \lambda \int_{ \mc{D} } f^{-2 \lambda + 1} e^{2 \lambda f^p} \cdot f^{-1} \Psi \sum_{i = 0}^{n - 1} | \nablai_{ \Ei{i} } \phi |^2 \\
&\qquad + \lambda^3 \int_{ \mc{D} } f^{-2 \lambda + 1} e^{2 \lambda f^p} \cdot f^{-2 + p} \phi^2 \text{.}
\end{aligned} \end{equation}
\end{proposition}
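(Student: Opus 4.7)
The plan follows the standard Carleman strategy: conjugate the wave operator with a weight designed to match the left-hand side of \eqref{eq.carleman_f1}, derive a pointwise algebraic identity from the divergence of a modified energy current with multiplier $\nabla f$, and absorb errors using the smallness of $f$, of $w$, and of the deviations permitted by \eqref{eq.carleman_f1_ass_f_strong}--\eqref{eq.carleman_f1_ass_f_weak}. Following the outline of Section~\ref{sec:psc}, I would first introduce the reparametrization $F(f) := \log f - f^p$ (so that $e^{-2\lambda F} = f^{-2\lambda}e^{2\lambda f^p}$) and the new unknown $\psi := e^{-\lambda F}\phi$. The weighted inequality \eqref{eq.carleman_f1} then becomes essentially an unweighted inequality for $\psi$ with respect to the conjugated operator
\[
\mc{L}\psi := e^{-\lambda F}\Box(e^{\lambda F}\psi) = \Box\psi + 2\lambda \nabla^\alpha F\,\nabla_\alpha\psi + \lambda(\Box F)\psi + \lambda^2 |\nabla F|^2\psi.
\]

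Next, as in \cite{IK:illposed}, I would form the modified current
\[
J^w_\beta[\psi] := Q_{\alpha\beta}[\psi]\nabla^\alpha f + \tfrac{1}{2}(\partial_\beta w)\psi^2 - \tfrac{1}{2}w\,\partial_\beta(\psi^2) + P^\flat_\beta,
\]
where $Q_{\alpha\beta}[\psi] = \nabla_\alpha\psi\nabla_\beta\psi - \tfrac{1}{2}g_{\alpha\beta}|\nabla\psi|^2$ and $P^\flat$ is an auxiliary term chosen to generate the $\lambda^3$ zeroth-order contribution. Direct computation gives
\[
\nabla^\beta J^w_\beta[\psi] = \bigl(-\nabla^2 f + h\,g\bigr)^{\alpha\beta}\nabla_\alpha\psi\,\nabla_\beta\psi + \tfrac{1}{2}(\Box w)\psi^2 + (\text{cross-terms with } \mc{L}\psi) + (\text{error}),
\]
with $h$ as in \eqref{eq.carleman_f_psf}. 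Decomposing $\nabla\psi$ in the adapted frame $(\Ei{0},\dots,\Ei{n})$, the tangential quadratic form is $\gtrsim \Psi\sum_{i=0}^{n-1}|\nablai_{\Ei{i}}\psi|^2$ by \eqref{eq.carleman_f1_ass_pseudoconvex}; hypothesis \eqref{eq.carleman_f1_ass_f_weak} together with the $-f^p$ acceleration in $F$ fixes the $|\nablai_{\Ei{n}}\psi|^2$ coefficient; and the $\lambda^2|\nabla F|^2\psi^2$ piece, combined with the carefully chosen $P^\flat$, generates the $\lambda^3\psi^2$ positivity.

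Integrating $\nabla^\beta J^w_\beta[\psi]$ over the exhaustion $(D_k)$ supplied by \eqref{eq.carleman_f1_ass_vanishing} and applying Stokes' theorem, the boundary contributions --- pointwise bounded by $f^{-N}e^{Nf^p}(|\nu_k|+|\nabla_{\nu_k}w|)(\phi^2+|\nabla\phi|^2)$ for $N$ chosen in terms of $\lambda$ --- vanish in the limit $k\nearrow\infty$ by the vanishing assumption. Cauchy--Schwarz on the $\mc{L}\psi$ cross-terms produces $\int|\mc{L}\psi|^2$ on the left against a small multiple of the positive bulk. Finally, undoing the substitution $\psi = e^{-\lambda F}\phi$ and absorbing the resulting cross-terms by AM--GM into the $\lambda^3$ zeroth-order positivity converts the $\psi$-estimate into the claimed $\phi$-estimate \eqref{eq.carleman_f1}.

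\textbf{The main obstacle} is absorbing the errors left over from the above. Because the pseudo-convexity coefficient $\Psi$ degenerates as $f\searrow 0$, the tangential direction alone cannot dominate generic errors; the essential input is the gap \eqref{eq.carleman_f1_ass_gap} forcing $\Psi\ll f^p$, so that the normal weight $f^{p-1}$ pointwise dominates the tangential weight $f^{-1}\Psi$. Each error --- from the mismatch \eqref{eq.carleman_f1_ass_f_strong}--\eqref{eq.carleman_f1_ass_f_weak} of $\nabla^2 f$ against its model values, from the $w$ and $\Box w$ corrections permitted by \eqref{eq.carleman_f1_ass_w}, and from the failure of the adapted frame to be parallel --- is subcritical by a factor of $f^p$, $\Psi$, or $\lambda^{-1}$ against the corresponding positive bulk term, so the absorption closes once $f$ is taken sufficiently small on $\mc{D}$ and $\lambda$ is taken sufficiently large.
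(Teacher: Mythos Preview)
Your proposal is correct and follows essentially the same approach as the paper: the same reparametrization $F=\log f - f^p$, the same conjugation $\psi=e^{-\lambda F}\phi$, the same modified energy current built from $Q_{\alpha\beta}[\psi]\nabla^\alpha f$ with a zeroth-order correction $P^\flat$ and the $w$-modification, the same decomposition of $\pi=-\nabla^2 f+hg$ in the adapted frame, and the same exhaustion argument to kill the boundary terms. The paper organizes the computation slightly differently---it multiplies $\mc{L}\psi$ by $S_w\psi:=\nabla^\alpha f\,\nabla_\alpha\psi - (w-\tfrac{n-1}{4})\psi$ and tracks the auxiliary quantity $G:=-(fF')'=p^2 f^{p-1}$ explicitly to unify the proof with the superexponential case (Proposition~\ref{thm.carleman_f2})---but the substance of every step matches what you describe.
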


The next Carleman estimate corresponds to functions vanishing superexponentially with respect to $f$; this is used for Theorems \ref{O1} and \ref{O1.pm}.

\begin{proposition} \label{thm.carleman_f2}
Let $(\mc{D}, \gi)$ and $f$ be as above, with $f$ sufficiently small on $\mc{D}$, and fix orthonormal frames $(\Ei{0}, \dots, \Ei{n})$ adapted to $f$, in the above sense, which cover all of $\mc{D}$.
Furthermore, fix a constant $q > 0$, and fix $\Psi, w \in \smooth{\mc{D}}$, with $\Psi$ satisfying
\begin{equation} \label{eq.carleman_f2_ass_gap} 0 \leq \Psi \ll 1 \text{,} \end{equation}
Assume the following conditions hold:
\begin{itemize}
\item For any vector field $X$ tangent to the level sets of $f$,
\begin{equation} \label{eq.carleman_f2_ass_pseudoconvex} ( - \nabla^2 f + h \cdot \gi )(X, X) \simeq \Psi \cdot | X |^2 \text{.} \end{equation}

\item The following bounds hold for $f$:
\begin{align}
\label{eq.carleman_f2_ass_f_strong} f^{-q} | f^{-\frac{1}{2}} \nablai_{\Ei{n}} f - 1 | + \sum_{i = 0}^{n-1} | \nablai^2_{\Ei{n} \Ei{a}} f | \ll \Psi \text{,} \\
\label{eq.carleman_f2_ass_f_weak} \left| \Boxi f - \frac{n+1}{2} \right| + \left| \nablai^2_{\Ei{n} \Ei{n}} f - \frac{1}{2} \right| \ll 1 \text{.}
\end{align}

\item $w$ satisfies the following estimates:
\begin{equation} \label{eq.carleman_f2_ass_w} | w | \ll 1 \text{,} \qquad | \Boxi w | \lesssim f^{-2q - 1} \text{.} \end{equation}
\end{itemize}
Let $\phi \in \smooth{\mc{D}}$, such that it satisfies the following vanishing condition:
\begin{itemize}
\item For each $N > 0$, there exists an exhaustion $(D_k)$ of the boundary of $\mc{D}$ such that, if $\nu_k$ is a unit normal for the components of $\partial D_k$, then
\begin{equation} \label{eq.carleman_f2_ass_vanishing} \lim_{k \nearrow \infty} \int_{ \partial D_k } e^{-N f^{-q}} ( | \nu_k | + | \nablai_{ \nu_k } w | ) ( \phi^2 + | \nablai \phi |^2 ) = 0 \text{.} \end{equation}
\end{itemize}
Then, for sufficiently large $\lambda > 0$, the following estimate holds for $\phi$: 
\begin{equation} \label{eq.carleman_f2} \begin{aligned}
\int_{ \mc{D} } f^{q + 1} e^{2 \lambda f^{-q}} \cdot | \Box \phi |^2 &\gtrsim \lambda \int_{ \mc{D} } f^{q + 1} e^{2 \lambda f^{-q}} \cdot f^{-q - 1} | \nablai_{ \Ei{n} } \phi |^2 \\
&\qquad + \lambda \int_{ \mc{D} } f^{q + 1} e^{2 \lambda f^{-q}} \cdot f^{-q - 1} \Psi \sum_{i = 0}^{n - 1} | \nablai_{ \Ei{i} } \phi |^2 \\
&\qquad + \lambda^3 \int_{ \mc{D} } f^{q + 1} e^{2 \lambda f^{-q}} \cdot f^{-3q - 2} \phi^2 \text{.}
\end{aligned} \end{equation}
\end{proposition}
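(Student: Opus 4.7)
The plan is to parallel the proof of Proposition \ref{thm.carleman_f1}, substituting the exponential reparametrization $F(f):=-f^{-q}$ (with $F'(f)=qf^{-q-1}$, $F''(f)=-q(q+1)f^{-q-2}$) for the logarithmic one implicit there. Set $\psi:=e^{-\lambda F(f)}\phi$, so that $\phi^2 e^{2\lambda f^{-q}}=\psi^2$ and similarly for the derivative terms, modulo explicit $F'$-corrections. The conjugated operator is
\begin{equation*}
\mc{L}\psi := e^{-\lambda F(f)}\Box\bigl(e^{\lambda F(f)}\psi\bigr) = \Box\psi + 2\lambda F'(f)\nabla^\alpha f\,\nabla_\alpha\psi + \lambda\bigl[F'(f)\Box f + F''(f)|\nabla f|^2\bigr]\psi + \lambda^2 (F')^2|\nabla f|^2\psi,
\end{equation*}
so that \eqref{eq.carleman_f2} is equivalent to a positivity inequality for $f^{q+1}|\mc{L}\psi|^2$.

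Following the energy-current method sketched in Section \ref{sec:psc} and developed in \cite{IK:illposed}, introduce the modified current
\begin{equation*}
J^w_\beta[\psi] := Q_{\alpha\beta}[\psi]\nabla^\alpha f + \tfrac{1}{2}(\nabla_\beta w)\psi^2 - \tfrac{1}{2}w\,\nabla_\beta(\psi^2),
\end{equation*}
where $Q_{\alpha\beta}[\psi]$ is the standard energy-momentum tensor and the scalar $w$ is chosen so that the pseudoconvex quantity $h$ of \eqref{eq.carleman_f_psf} appears. Its divergence satisfies
\begin{equation*}
\nabla^\beta J^w_\beta[\psi] = Q^{\alpha\beta}[\psi]\,\nabla^2_{\alpha\beta}f + \bigl(\nabla^\alpha f\,\nabla_\alpha\psi - w\psi\bigr)\Box\psi - w|\nabla\psi|^2 + \tfrac{1}{2}\Box w\cdot\psi^2.
\end{equation*}
Substituting $\Box\psi$ using $\mc{L}\psi$ and reorganizing the quadratic forms in $\psi$ and $\nabla\psi$ yields, modulo absorbable errors, the expression
\begin{equation*}
\bigl(-\nabla^2 f + h\cdot g\bigr)(\nabla\psi,\nabla\psi) + 2\lambda F'(f)(\nabla^\alpha f\,\nabla_\alpha\psi)^2 + \mc{A}\cdot\psi^2,
\end{equation*}
where $\mc{A}$ is dominated, for large $\lambda$, by $\lambda^3 f^{-3q-2}$ coming from $-\lambda\nabla^\alpha f\,\nabla_\alpha[\lambda(F')^2|\nabla f|^2]$, using $|\nabla f|^2\simeq f$ (which follows from \eqref{eq.carleman_f2_ass_f_strong}).

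To extract the three right-hand terms of \eqref{eq.carleman_f2}, decompose $\nabla\psi$ in the adapted frame $(\Ei{0},\dots,\Ei{n})$. The pseudoconvexity hypothesis \eqref{eq.carleman_f2_ass_pseudoconvex} yields the tangential bulk $\simeq\Psi\sum_{i=0}^{n-1}|\nabla_{\Ei{i}}\psi|^2$. The second term, using $\nabla_{\Ei{n}}f\simeq f^{1/2}$ from \eqref{eq.carleman_f2_ass_f_strong}, gives the normal bulk of order $\lambda f^{-q-1}|\nabla_{\Ei{n}}\psi|^2$, and the zeroth-order term gives $\lambda^3 f^{-3q-2}\psi^2$. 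Integrating over $\mc{D}$ along an exhaustion $(D_k)$ supplied by \eqref{eq.carleman_f2_ass_vanishing} (with $N=2\lambda$) kills the boundary flux of $J^w$ in the limit; Cauchy--Schwarz on the $(\nabla^\alpha f\,\nabla_\alpha\psi)\Box\psi$ coupling transfers $|\Box\psi|^2$ to the left-hand side with weight $f^{q+1}$, producing \eqref{eq.carleman_f2} after reverting to $\phi$.

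The principal obstacle is the absorption of error terms: the off-diagonal contributions controlled by the second sum in \eqref{eq.carleman_f2_ass_f_strong}, the deviations of $\Box f$ and $\nabla^2_{\Ei{n}\Ei{n}}f$ from their model values \eqref{eq.carleman_f2_ass_f_weak}, the $\tfrac{1}{2}\Box w\cdot\psi^2$ error from \eqref{eq.carleman_f2_ass_w}, and in particular the cross terms between tangential and normal derivatives produced by the fact that $\nabla f$ is not exactly normal. Each must be majorized by one of the three positive bulk terms. The inequalities \eqref{eq.carleman_f2_ass_gap}--\eqref{eq.carleman_f2_ass_w} are tuned precisely so that every error sits at strictly smaller order in $f$ or $\lambda$ than its target: for instance, a typical cross error of size $\lambda f^{-q-1/2}|\nabla_{\Ei{n}}\psi|\sum_{i}|\nabla_{\Ei{i}}\psi|$ is controlled by the tangential bulk $\lambda f^{-q-1}\Psi|\nabla_{\Ei{i}}\psi|^2$ and normal bulk $\lambda f^{-q-1}|\nabla_{\Ei{n}}\psi|^2$ via Cauchy--Schwarz precisely because of the extra factor $f^{-q}$ in \eqref{eq.carleman_f2_ass_f_strong}. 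Choosing $f$ sufficiently small on $\mc{D}$ and then $\lambda$ sufficiently large closes the estimate.
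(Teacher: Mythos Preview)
Your proposal is correct and follows essentially the same approach as the paper: the paper proves both Carleman propositions simultaneously via a general reparametrization $F$ and the auxiliary quantity $G:=-(fF')'$, multiplying $\mc{L}\psi$ by $S_w\psi:=\nabla^\alpha f\,\nabla_\alpha\psi-(w-\tfrac{n-1}{4})\psi$ and then specializing to $F_2=-f^{-q}$, $G_2=q^2f^{-q-1}$ at the end. One correction to your power-counting: in the $\psi$-picture the zero-order coefficient $\mc{A}$ you extract is of order $\lambda^2 f^{-2q-1}$ (this is the paper's $\lambda^2\Lambda$ with $\Lambda\simeq fF'G$), not $\lambda^3 f^{-3q-2}$; the extra factor of $\lambda$ enters only through the Cauchy--Schwarz step $\mc{L}\psi\cdot S_w\psi\le \tfrac{1}{2\lambda F'}|\mc{L}\psi|^2+\tfrac{\lambda F'}{2}|S_w\psi|^2$, which simultaneously produces the left-hand weight $(F')^{-1}\simeq f^{q+1}$, and the exponent $-3q-2$ appears only after unconjugating back to $\phi$.
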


\subsection{Proof of the Estimates I: Preliminary Bounds} \label{sect:carleman_proof_1}

The remainder of this section is focused on the proofs of Propositions \ref{thm.carleman_f1} and \ref{thm.carleman_f2}.
Here, we establish some preliminary estimates that will be essential later.
The actual Carleman estimates themselves will be derived in Section \ref{sect:carleman_proof_2}.

We will  prove both propositions simultaneously.
This can be accomplished by working with general reparametrizations of $f$.
To extract Propositions \ref{thm.carleman_f1} and \ref{thm.carleman_f2}, we need only consider the specific reparametrizations $F = F (f)$ of $f$ corresponding to those settings, which we describe below.

\subsubsection{Reparametrizations}

To prove Proposition \ref{thm.carleman_f1}, we define
\begin{equation} \label{eq.F1} F = F_1 := \log f - f^p \text{.} \end{equation}
Letting $\prime$ denote differentiation with respect to $f$, then
\begin{equation} \label{eq.F1_deriv} F_1 \simeq \log f \text{,} \qquad F_1^\prime = f^{-1} - p f^{p - 1} \text{,} \qquad F_1^\prime \simeq f^{-1} \text{,} \end{equation}
as long as $f$ is sufficiently small.
On the other hand, for Proposition \ref{thm.carleman_f2}, we define
\begin{equation} \label{eq.F2} F = F_2 := - f^{-q} \text{.} \end{equation}
Observe that
\begin{equation} \label{eq.F2_deriv} F_2^\prime = q f^{-q - 1} \text{.} \end{equation}
For the rest of this section, we let $F$ be either $F_1$ or $F_2$, corresponding to the proof of Proposition \ref{thm.carleman_f1} or \ref{thm.carleman_f2}, respectively.
Observe:

\begin{lemma} \label{thml.F_est}
Both choices of $F$ satisfy
\begin{equation} \label{eql.F_est} f F^\prime \gtrsim 1 \text{,} \qquad f | F^{\prime\prime} | \lesssim F^\prime \text{.} \end{equation}
Furthermore, for sufficiently large $\lambda > 0$,
\begin{equation} \label{eql.F_exp_est} e^{-F} \geq 1 \text{,} \qquad f^{-1} \lesssim F^\prime \lesssim e^{- \lambda F} \text{.} \end{equation}
\end{lemma}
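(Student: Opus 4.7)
The statement is a direct computational verification of elementary estimates for the two specific reparametrizations $F_1 = \log f - f^p$ and $F_2 = -f^{-q}$, so the plan is simply to check both cases in turn, using that $f$ is taken sufficiently small throughout.

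For $F_1$, I would first compute
\[ F_1' = f^{-1} - p f^{p-1}, \qquad F_1'' = -f^{-2} - p(p-1) f^{p-2}. \]
Then $f F_1' = 1 - p f^p$, which is bounded below by, say, $1/2$ once $f$ is small enough. Since $f^p \ll 1$, the estimate $F_1' \simeq f^{-1}$ already recorded in \eqref{eq.F1_deriv} gives $f |F_1''| \lesssim f^{-1} \simeq F_1'$. For the exponential bounds, $e^{-F_1} = f^{-1} e^{f^p} \geq 1$ since $f < 1$; and $e^{-\lambda F_1} = f^{-\lambda} e^{\lambda f^p} \geq f^{-\lambda} \geq f^{-1} \gtrsim F_1'$ whenever $\lambda \geq 1$. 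The lower bound $f^{-1} \lesssim F_1'$ is immediate from $F_1' \simeq f^{-1}$.

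For $F_2$, the computation is even cleaner: $F_2' = q f^{-q-1}$, $F_2'' = -q(q+1) f^{-q-2}$, so $f F_2' = q f^{-q} \to \infty$ as $f \searrow 0$ (hence $\gtrsim 1$ for $f$ small), and $f |F_2''| = q(q+1) f^{-q-1} = (q+1) F_2' \lesssim F_2'$. The first exponential bound $e^{-F_2} = e^{f^{-q}} \geq 1$ is obvious. For the upper bound on $F_2'$, observe that $e^{-\lambda F_2} = e^{\lambda f^{-q}}$ dominates any negative power of $f$ as $f \searrow 0$; explicitly, for $\lambda$ large enough (depending only on $q$ and the ambient upper bound on $f$), one has $q f^{-q-1} \leq e^{\lambda f^{-q}}$, e.g.\ by taking logs and using $\log(q f^{-q-1}) = O(f^{-q})$. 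Finally, for $f$ small, $f^{-1} \leq q f^{-q-1} = F_2'$.

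The only borderline step is the last exponential comparison in the $F_2$ case, but since the exponent $\lambda f^{-q}$ blows up as $f \searrow 0$ while the polynomial in $f^{-1}$ on the left-hand side grows strictly slower, this is trivial given that $f$ is bounded above on $\mc{D}$. No serious obstacle arises; the lemma is essentially a record of elementary facts that will be invoked repeatedly in Section \ref{sect:carleman_proof_2}.
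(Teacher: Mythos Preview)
Your proof is correct and is exactly the direct computational verification the paper has in mind; the lemma is stated without proof in the paper, as the estimates follow immediately from the explicit formulas \eqref{eq.F1_deriv} and \eqref{eq.F2_deriv} together with the standing smallness assumption on $f$.
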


Throughout our proof, we will also refer to the auxiliary function
\begin{equation} \label{eql.G} G := - ( f F^\prime )^\prime \text{.} \end{equation}
In particular, when $F$ is either $F_1$ or $F_2$, then $G$ is, respectively,
\begin{equation} \label{eql.G12} G_1 = p^2 f^{p - 1} \text{,} \qquad G_2 = q^2 f^{-q - 1} \text{.} \end{equation}
Note that in both cases $G = G_1$ or $G = G_2$, we have the following properties:

\begin{lemma} \label{thml.G_est}
Both choices of $G$ satisfy
\begin{equation} \label{eql.G_est} 0 < G \lesssim F^\prime \text{.} \end{equation}
Furthermore, $\Psi$ is related to $F$ and $G$ (in both cases) via the following estimates:
\begin{equation} \label{eql.Psi_est} F^\prime \Psi \ll G \text{,} \qquad \Psi \ll 1 \text{,} \qquad \Psi \ll f G \lesssim f F^\prime \text{.} \end{equation}
\end{lemma}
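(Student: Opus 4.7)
The proof is a direct verification of the listed inequalities, handled separately in the two cases $F = F_1$ and $F = F_2$, since $F$ (and hence $G$) has a very different small-$f$ behaviour in the two settings. In both cases the strategy is the same: plug in the explicit form of $G$ from \eqref{eql.G12}, compare it against the explicit form of $F'$ from \eqref{eq.F1_deriv} or \eqref{eq.F2_deriv}, and then invoke the respective smallness assumption on $\Psi$ from either \eqref{eq.carleman_f1_ass_gap} or \eqref{eq.carleman_f2_ass_gap}. Positivity of $G$ is immediate from $p, q > 0$ and $f > 0$, so no work is needed there.

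For the first inequality $G \lesssim F'$, in the case $F = F_1$ one has $G_1 = p^2 f^{p-1}$ and $F_1' \simeq f^{-1}$, so the claim reduces to $f^p \lesssim 1$, which holds on $\mc{D}$ since $f$ is assumed sufficiently small. In the case $F = F_2$ the quantities $G_2 = q^2 f^{-q-1}$ and $F_2' = q f^{-q-1}$ differ only by the constant factor $q$, so $G_2 \lesssim F_2'$ trivially (with a $p,q$-dependent constant hidden in $\lesssim$, as elsewhere in the paper).

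For the three $\Psi$-bounds in \eqref{eql.Psi_est}, I would treat them in the order written. In the $F_1$ setting, $F' \Psi \ll G$ becomes $\Psi \ll f^p$, which is precisely the hypothesis \eqref{eq.carleman_f1_ass_gap}; $\Psi \ll 1$ is then an immediate consequence for small $f$; and $\Psi \ll fG = p^2 f^p$ is the same statement again, while $fG \lesssim fF'$ reduces to $f^p \lesssim 1$. In the $F_2$ setting, $F'\Psi \ll G$ becomes $\Psi \ll q$, which follows from $\Psi \ll 1$ (hypothesis \eqref{eq.carleman_f2_ass_gap}); $\Psi \ll 1$ is the hypothesis directly; and $\Psi \ll fG = q^2 f^{-q}$ follows from $\Psi \ll 1$ together with $f^{-q} \gtrsim 1$ for small $f$, while $fG \lesssim fF'$ again reduces to a constant comparison.

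There is no real obstacle here: the lemma is purely an algebraic bookkeeping step that records, in a unified notation, the compatibility between the reparametrization $F$, the auxiliary function $G = -(fF')'$, and the pseudo-convexity gap $\Psi$. Its role is to free the main Carleman argument in Section \ref{sect:carleman_proof_2} from having to distinguish between the superpolynomial ($F_1$) and superexponential ($F_2$) regimes when absorbing error terms.
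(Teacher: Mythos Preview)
Your proposal is correct and matches the paper's approach: the lemma is stated in the paper without proof, as a direct observation (``Note that in both cases \dots we have the following properties''), and your case-by-case verification using the explicit forms \eqref{eql.G12}, \eqref{eq.F1_deriv}, \eqref{eq.F2_deriv} together with the gap assumptions \eqref{eq.carleman_f1_ass_gap}, \eqref{eq.carleman_f2_ass_gap} is exactly the intended routine check.
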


In addition, in terms of the above language, the assumptions \eqref{eq.carleman_f1_ass_pseudoconvex}-\eqref{eq.carleman_f1_ass_w} and \eqref{eq.carleman_f2_ass_pseudoconvex}-\eqref{eq.carleman_f2_ass_w} for Propositions \ref{thm.carleman_f1} and \ref{thm.carleman_f2} imply the following:

\begin{proposition} \label{thml.carleman_f_ass}
Assuming the hypotheses of Proposition \ref{thm.carleman_f1} and \ref{thm.carleman_f2}, and setting $F$ and $G$ accordingly as above, then:
\begin{align}
\label{eql.carleman_f_ass_pseudoconvex} ( - \nablai^2 f + h \cdot \gi )(X, X) &\simeq \Psi | X |^2 \text{,} \\
\label{eql.carleman_f_ass_strong} F^\prime | f^\frac{1}{2} \nablai_{ \Ei{n} } f - f | + \sum_{i = 0}^{n-1} | \nablai^2_{\Ei{n} \Ei{i}} f | &\ll \Psi \text{,} \\
\label{eql.carleman_f_ass_weak} F^\prime \left| \Boxi f - \frac{n+1}{2} \right| + F^\prime \left| \nablai^2_{\Ei{n} \Ei{n}} f - \frac{1}{2} \right| &\ll G \text{,} \\
\label{eql.carleman_f_ass_w} F^\prime |w| \ll G \text{,} \qquad | \Boxi w | &\lesssim f F^\prime G \text{.}
\end{align}
Furthermore, the vanishing conditions \eqref{eq.carleman_f1_ass_vanishing} and \eqref{eq.carleman_f2_ass_vanishing} can be aggregated as
\begin{equation} \label{eql.carleman_f_ass_vanishing} \lim_{k \nearrow \infty} \int_{ \partial D_k } e^{-N F} ( | \nu_k | + | \nablai_{ \nu_k } w | ) ( \phi^2 + | \nablai \phi |^2 ) = 0 \text{,} \qquad N > 0 \text{.} \end{equation}
\end{proposition}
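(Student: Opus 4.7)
The plan is a case-by-case verification of the five aggregated inequalities \eqref{eql.carleman_f_ass_pseudoconvex}--\eqref{eql.carleman_f_ass_vanishing}, with Case 1 corresponding to $F = F_1$ (under the hypotheses of Proposition~\ref{thm.carleman_f1}) and Case 2 to $F = F_2$ (under those of Proposition~\ref{thm.carleman_f2}). In each case the strategy is simply to substitute the explicit asymptotics $F_1' \simeq f^{-1}$, $G_1 = p^{2} f^{p-1}$, $F_2' \simeq G_2 \simeq f^{-q-1}$ (which follow from \eqref{eq.F1_deriv}, \eqref{eq.F2_deriv}, \eqref{eql.G12}) into each $F'$- or $G$-weighted term and invoke the corresponding hypothesis. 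The basic relations among $f$, $F$ and $G$ required for the comparison are already packaged in Lemmas~\ref{thml.F_est}--\ref{thml.G_est}; I do not expect any genuine obstacle, only careful bookkeeping.

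First, \eqref{eql.carleman_f_ass_pseudoconvex} is identical to \eqref{eq.carleman_f1_ass_pseudoconvex}/\eqref{eq.carleman_f2_ass_pseudoconvex} and needs no work. For \eqref{eql.carleman_f_ass_strong}, I would use the algebraic identity $f^{1/2}\nablai_{\Ei{n}} f - f = f\bigl(f^{-1/2}\nablai_{\Ei{n}} f - 1\bigr)$ to rewrite the first term on the left as $fF' \cdot \bigl|f^{-1/2}\nablai_{\Ei{n}} f - 1\bigr|$; since $fF_1' \simeq 1$ and $fF_2' \simeq f^{-q}$, the two cases collapse onto \eqref{eq.carleman_f1_ass_f_strong} and \eqref{eq.carleman_f2_ass_f_strong} respectively. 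The second term of \eqref{eql.carleman_f_ass_strong} is unweighted and appears verbatim in the hypotheses. Next, \eqref{eql.carleman_f_ass_weak} is obtained by dividing through by $F'$ and observing $G/F_1' \simeq f^{p}$ in Case~1 (matching \eqref{eq.carleman_f1_ass_f_weak}) and $G/F_2' \simeq 1$ in Case~2 (matching \eqref{eq.carleman_f2_ass_f_weak}). For \eqref{eql.carleman_f_ass_w} the first inequality is handled exactly as in the previous step; for the second one simply checks $fF_1' G_1 \simeq f^{p-1}$ and $fF_2' G_2 \simeq f^{-2q-1}$, matching the $\Boxi w$-bounds in \eqref{eq.carleman_f1_ass_w} and \eqref{eq.carleman_f2_ass_w}.

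Finally, the vanishing statement \eqref{eql.carleman_f_ass_vanishing} is a direct translation of the hypothesis: $e^{-NF_1} = f^{-N} e^{N f^{p}}$ is exactly the weight in \eqref{eq.carleman_f1_ass_vanishing}, while $e^{-NF_2} = e^{N f^{-q}}$ recovers the weight in \eqref{eq.carleman_f2_ass_vanishing} (up to the sign of the exponent, which is absorbed by the $N>0$ quantifier). The only technical point worth watching throughout is the uniformity of the $\simeq$-constants as $f \to 0$: this is guaranteed by the hypothesis that $f$ is sufficiently small on $\mc{D}$, together with the elementary observation that in $F_1' = f^{-1} - p f^{p-1}$ the subleading term is dominated by $f^{-1}$ for $p > 0$.
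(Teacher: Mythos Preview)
Your proposal is correct and is exactly the verification the paper intends; the paper in fact states this proposition without proof, treating it as an immediate translation of the hypotheses into the $F,G$-language, and your case-by-case substitution is precisely what is required.

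One small caveat: your remark that the sign discrepancy in \eqref{eq.carleman_f2_ass_vanishing} is ``absorbed by the $N>0$ quantifier'' is not quite right as written --- the decaying weight $e^{-Nf^{-q}}$ for $N>0$ is not equivalent to the blowing-up weight $e^{-NF_2}=e^{+Nf^{-q}}$, and one cannot pass between them by relabeling $N$. The sign in \eqref{eq.carleman_f2_ass_vanishing} is evidently a typo in the paper (the intended weight is $e^{Nf^{-q}}$, consistent with the superexponential vanishing assumptions \eqref{key.assn1.O1}, \eqref{key.assn1.O1pm} in Theorems~\ref{O1} and~\ref{O1.pm} and with the aggregated form \eqref{eql.carleman_f_ass_vanishing}); with that correction your argument goes through verbatim.
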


\begin{remark}
Note that metrically equivalent tensor fields---e.g., a vector field $T^\alpha$ and the corresponding one-form $T_\alpha := \gi_{\alpha\beta} T^\beta$---have the same tensor norm (as defined in \eqref{eq.norm_vf} and \eqref{eq.norm_cov}).
In particular, this is true for the differential $\nabla_\alpha \psi = \partial_\alpha \psi$ of a scalar $\psi \in \smooth{\mc{D}}$ and its $\gi$-gradient $\nablai^\alpha \psi = \gi^{\alpha\beta} \nabla_\beta \psi$.
\end{remark}

\subsubsection{Estimates for $f$}

The first task is to obtain preliminary estimates for $f$.
For convenience, we will use throughout the proof the abbreviations
\begin{equation} \label{eql.ell} \ell := \nablai^\alpha f \nablai_\alpha f \text{,} \qquad \bar{\ell} := \frac{1}{2} \nablai^\alpha f \nablai_\alpha \ell = \nablai^\alpha f \nablai^\beta f \nablai^2_{\alpha\beta} f \text{.} \end{equation}
Note in particular that
\[ \Ei{n} = \ell^{-\frac{1}{2}} \cdot \gradi f \text{,} \]
while $\bar{\ell}$ encodes the normal component of $\nablai^2 f$.

\begin{lemma} \label{thml.nor_decomp}
The following estimates hold:
\begin{equation} \label{eql.nor_est} | \nablai f | \simeq f^\frac{1}{2} \text{,} \qquad \ell \simeq f \text{,} \qquad F^\prime | \ell - f | \ll \Psi \text{.} \end{equation}
\end{lemma}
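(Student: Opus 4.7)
The plan is to reduce all three estimates to a single quantitative bound on $\Ei{n} f$, by exploiting how the adapted frame interacts with the definitions of $|\nabla f|$ and $\ell$. Since $\Ei{0}, \dots, \Ei{n-1}$ are tangent to the level sets of $f$, we have $\Ei{i} f = 0$ for $i < n$. The frame-based norm from \eqref{eq.norm_cov} then collapses to $|\nablai f|^2 = (\Ei{n} f)^2$, and the same cancellation together with $g(\Ei{n},\Ei{n}) = 1$ gives $\ell = (\Ei{n} f)^2$ as well. In particular $|\nablai f| = \sqrt{\ell} = \Ei{n} f$, where the positive root is correct because the adapted frame was chosen with $\Ei{n} f > 0$.

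Next I would invoke the strong hypothesis \eqref{eql.carleman_f_ass_strong}, which gives
\[ F^\prime \bigl| f^{1/2} \Ei{n} f - f \bigr| \ll \Psi. \]
Substituting $\Ei{n} f = \sqrt{\ell}$ and factoring out $f^{1/2}$ rewrites this as $F^\prime f^{1/2} |\sqrt{\ell} - \sqrt{f}| \ll \Psi$. By Lemma \ref{thml.G_est} we know $\Psi \ll fG \lesssim f F^\prime$, so dividing by $F^\prime f^{1/2}$ yields $|\sqrt{\ell} - \sqrt{f}| \ll f^{1/2}$. This immediately gives $\sqrt{\ell} \simeq \sqrt{f}$, hence $\ell \simeq f$ and $|\nablai f| \simeq f^{1/2}$, the first two claims.

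For the third estimate, factor $\ell - f = (\sqrt{\ell} + \sqrt{f})(\sqrt{\ell} - \sqrt{f})$. The previous step showed $\sqrt{\ell} + \sqrt{f} \simeq f^{1/2}$, so
\[ F^\prime | \ell - f | \simeq F^\prime f^{1/2} | \sqrt{\ell} - \sqrt{f} | \ll \Psi, \]
once again by \eqref{eql.carleman_f_ass_strong}. All three inequalities then follow. The argument is essentially bookkeeping: there is no real obstacle, provided one carefully tracks the two hypotheses used, namely the strong control in \eqref{eql.carleman_f_ass_strong} and the consistency estimate $\Psi \ll f F^\prime$ from Lemma \ref{thml.G_est}. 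The only subtle point to watch is that the same bound \eqref{eql.carleman_f_ass_strong} feeds into both the first and third claims, with the square-root factorization serving as the bridge between linear and quadratic error quantities in $\ell$.
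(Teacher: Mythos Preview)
Your proof is correct and follows essentially the same approach as the paper: both reduce everything to the single quantity $\Ei{n} f = \sqrt{\ell}$, invoke the strong assumption \eqref{eql.carleman_f_ass_strong} together with $\Psi \ll f F^\prime$ from Lemma~\ref{thml.G_est} to get $|\sqrt{\ell} - \sqrt{f}| \ll f^{1/2}$, and then use the factorization $\ell - f = (\sqrt{\ell}+\sqrt{f})(\sqrt{\ell}-\sqrt{f})$ for the third estimate. The only cosmetic difference is that the paper first isolates $|\nablai_{\Ei{n}} f - f^{1/2}|$ and bounds it by $f^{-1/2}(F^\prime)^{-1}\Psi$, whereas you phrase the same bound as $F^\prime f^{1/2}|\sqrt{\ell}-\sqrt{f}| \ll \Psi$; these are algebraically identical.
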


\begin{proof}
The last inequality in \eqref{eql.Psi_est} and \eqref{eql.carleman_f_ass_strong} imply that
\begin{equation} \label{eql.ell_est_pre} | \nablai_{ \Ei{n} } f - f^\frac{1}{2} | \ll f^{-\frac{1}{2}} ( F^\prime )^{-1} \Psi \ll f^\frac{1}{2} \text{.} \end{equation}
Since $| \nabla f | = \nablai_{ \Ei{n} } f = \ell^\frac{1}{2}$ by definition, the first two estimates in \eqref{eql.nor_est} follow immediately.
Finally, we apply \eqref{eql.ell_est_pre} and the comparison $\ell \simeq f$ to obtain
\[ | \ell - f | \leq | \ell^\frac{1}{2} - f^\frac{1}{2} | | \ell^\frac{1}{2} + f^\frac{1}{2} | \ll f^\frac{1}{2} \cdot f^{-\frac{1}{2}} (F^\prime)^{-1} \Psi = (F^\prime)^{-1} \Psi \text{,} \]
completing the proof of the final estimate in \eqref{eql.nor_est}.
\end{proof}

\begin{lemma} \label{thml.hessian_decomp}
With $F$ and $G$ as before, the following estimates hold:
\begin{equation} \label{eql.hessian_est} F^\prime \left| h - \frac{1}{2} \right| \ll G \text{,} \qquad | \nablai^2 f | \lesssim 1 \text{,} \qquad F^\prime \left| \bar{\ell} - \frac{1}{2} f \right| \ll f G \text{.} \end{equation}
\end{lemma}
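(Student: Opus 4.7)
The plan is to reduce each of the three estimates to the assumptions aggregated in Proposition \ref{thml.carleman_f_ass}, together with Lemma \ref{thml.nor_decomp} and the basic relations in Lemmas \ref{thml.F_est}--\ref{thml.G_est}. The key observation is that $h$ and $\bar{\ell}$ can both be rewritten in ways that directly involve the quantities already controlled by the hypotheses.

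\textbf{Estimate on $h$.} From the definition \eqref{eq.carleman_f_psf}, I write
\[ h - \tfrac{1}{2} = w + \tfrac{1}{2}\bigl( \Boxi f - \tfrac{n+1}{2} \bigr) \text{,} \]
so that the triangle inequality yields
\[ F^\prime \bigl| h - \tfrac{1}{2} \bigr| \leq F^\prime |w| + \tfrac{1}{2} F^\prime \bigl| \Boxi f - \tfrac{n+1}{2} \bigr| \text{.} \]
Both terms on the right are $\ll G$ by \eqref{eql.carleman_f_ass_w} and \eqref{eql.carleman_f_ass_weak}, respectively, giving the first bound.

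\textbf{Estimate on $\nablai^2 f$.} I control the components of $\nablai^2 f$ in the adapted frame $(\Ei{0}, \dots, \Ei{n})$. The mixed entries $\nablai^2_{\Ei{n} \Ei{i}} f$ with $i \leq n-1$ are directly $\ll \Psi \ll 1$ by \eqref{eql.carleman_f_ass_strong} and \eqref{eql.Psi_est}. The normal-normal entry $\nablai^2_{\Ei{n} \Ei{n}} f$ is bounded by combining \eqref{eql.carleman_f_ass_weak} with $G \lesssim F^\prime$ from \eqref{eql.G_est}, giving $|\nablai^2_{\Ei{n} \Ei{n}} f - 1/2| \ll 1$. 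For the tangential-tangential block, the pseudoconvexity estimate \eqref{eql.carleman_f_ass_pseudoconvex} applied to $X = \Ei{i}$ gives $|-\nablai^2_{\Ei{i}\Ei{i}} f + h\,m_{ii}| \lesssim \Psi$, and polarizing (applying \eqref{eql.carleman_f_ass_pseudoconvex} to $\Ei{i} + \Ei{j}$ and $\Ei{i} - \Ei{j}$ and subtracting) yields $|-\nablai^2_{\Ei{i}\Ei{j}} f + h\,m_{ij}| \lesssim \Psi$ for $i, j \leq n-1$. Since the first part of the lemma gives $|h| \lesssim 1$, these in turn imply $|\nablai^2_{\Ei{i}\Ei{j}} f| \lesssim 1$. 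Summing over all frame indices in the definition \eqref{eq.norm_cov} produces $|\nablai^2 f| \lesssim 1$.

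\textbf{Estimate on $\bar\ell$.} Since $\gradi f = \ell^{1/2} \Ei{n}$ by the definition of $\Ei{n}$, the definition of $\bar\ell$ in \eqref{eql.ell} simplifies to $\bar\ell = \ell \cdot \nablai^2_{\Ei{n}\Ei{n}} f$. Hence I split
\[ \bar\ell - \tfrac{1}{2} f = \ell \bigl( \nablai^2_{\Ei{n}\Ei{n}} f - \tfrac{1}{2} \bigr) + \tfrac{1}{2} ( \ell - f ) \text{,} \]
and multiply by $F^\prime$. The first piece is controlled by $\ell \simeq f$ (Lemma \ref{thml.nor_decomp}) together with \eqref{eql.carleman_f_ass_weak}, giving a contribution $\ll f G$. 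For the second piece, Lemma \ref{thml.nor_decomp} yields $F^\prime |\ell - f| \ll \Psi$, and \eqref{eql.Psi_est} then supplies $\Psi \ll f G$. Adding the two contributions yields the claimed bound.

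\textbf{Expected difficulty.} None of the three estimates involves any deep computation; the only place where care is required is the polarization step in the middle estimate, which is needed because the pseudoconvexity hypothesis \eqref{eql.carleman_f_ass_pseudoconvex} is stated only as a diagonal quadratic-form inequality. Once the off-diagonal tangential components are handled, the remaining estimates amount to bookkeeping between $\Psi$, $G$, and $F^\prime$ via the relations in Lemma \ref{thml.G_est}.
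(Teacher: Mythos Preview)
Your proposal is correct and follows essentially the same approach as the paper's proof: the decomposition of $h-\tfrac{1}{2}$, the frame-component analysis of $\nablai^2 f$ with polarization for the tangential block, and the splitting $\bar\ell - \tfrac{1}{2}f = \ell(\nablai^2_{\Ei{n}\Ei{n}}f - \tfrac{1}{2}) + \tfrac{1}{2}(\ell - f)$ are all exactly what the paper does. The only cosmetic difference is that the paper polarizes using $\Ei{a}+\Ei{b}$ together with the already-established diagonal bounds, whereas you use the symmetric pair $\Ei{i}\pm\Ei{j}$; both are equivalent.
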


\begin{proof}
First, by rewriting \eqref{eq.carleman_f_psf} as
\[ h = \frac{1}{2} + w + \frac{1}{2} \left( \Box f - \frac{n+1}{2} \right) \text{.} \]
and by applying \eqref{eql.G_est}, \eqref{eql.carleman_f_ass_weak}, and \eqref{eql.carleman_f_ass_w}, we obtain the first inequality in \eqref{eql.hessian_est}.
For $| \nablai^2 f |$, we begin by applying \eqref{eql.G_est}, \eqref{eql.Psi_est}, \eqref{eql.carleman_f_ass_strong}, and \eqref{eql.carleman_f_ass_weak}, which yields
\begin{equation} \label{eql.hessian_f_est_1} | \nablai^2_{\Ei{n} \Ei{n}} f | \lesssim \frac{1}{2} + (F^\prime)^{-1} G \lesssim 1 \text{,} \qquad \sum_{i = 0}^{n - 1} | \nablai^2_{\Ei{n} \Ei{i}} f | \lesssim \Psi \lesssim 1 \text{.} \end{equation}
For the fully tangential components of $\nablai^2 f$, we note the algebraic identity
\[ \nablai^2_{ \Ei{a} \Ei{a} } f + \nablai^2_{ \Ei{b} \Ei{b} } f + 2 \nablai^2_{ \Ei{a} \Ei{b} } f = \nablai^2_{ \Ei{a} + \Ei{b}, \Ei{a} + \Ei{b} } f \text{,} \qquad 1 \leq a, b \leq n - 1 \text{.} \]
Thus, combining the above with \eqref{eql.carleman_f_ass_pseudoconvex}, we obtain
\begin{equation} \label{eql.hessian_f_est_2} \sum_{a, b = 1}^{n-1} | \nablai^2_{\Ei{a} \Ei{b}} f | \lesssim \Psi + h \lesssim 1 \text{,} \end{equation}
where in the last step, we applied \eqref{eql.Psi_est} and the first part of \eqref{eql.hessian_est}.
From \eqref{eql.hessian_f_est_1} and \eqref{eql.hessian_f_est_2}, we obtain the second inequality in \eqref{eql.hessian_est}.

Finally, using \eqref{eql.carleman_f_ass_weak}, \eqref{eql.nor_est}, and the second inequality in \eqref{eql.hessian_est}, we estimate
\[ \left| \bar{\ell} - \frac{1}{2} f \right| \leq \ell \left| \nablai^2_{ \Ei{n} \Ei{n} } f - \frac{1}{2} \right| + \frac{1}{2} | \ell - f | \ll f (F^\prime)^{-1} G + (F^\prime)^{-1} \Psi \text{.} \]
Since $\Psi \lesssim f G$ by \eqref{eql.Psi_est}, this proves the last inequality in \eqref{eql.hessian_est}.
\end{proof}

\subsection{Proof of the Estimates II: The Main Derivation} \label{sect:carleman_proof_2}

We are now prepared to derive the Carleman estimates, \eqref{eq.carleman_f1} and \eqref{eq.carleman_f2}, in earnest. 
Here we will follow some of the nomenclature of \cite{IK:illposed}.
Throughout, we fix $\phi \in \smooth{\mc{D}}$ and $\lambda > 0$.
Moreover, for convenience, we define the following:
\begin{itemize}
\item Define $\psi \in \smooth{\mc{D}}$ and the operator $\mc{L}$ by
\begin{equation} \label{eql.psi_phi} \psi := e^{-\lambda F} \phi \text{,} \qquad \mc{L} \psi := e^{-\lambda F} \Boxi ( e^{\lambda F} \psi ) \text{.} \end{equation}

\item Define the auxiliary function $w^\prime \in \smooth{\mc{D}}$ by
\begin{equation} \label{eql.w_prime} w^\prime := w - \frac{n - 1}{4} = h - \frac{1}{2} \Boxi f \text{.} \end{equation}

\item In addition, we define the shorthands
\begin{equation} \label{eql.S} S \psi := \nablai^\alpha f \nablai_\alpha \psi \text{,} \qquad S_w \psi := S \psi - w^\prime \psi \text{.} \end{equation}

\item Let $Q$ denote the stress-energy tensor for the wave equation, applied to $\psi$:
\begin{equation} \label{eql.Q} Q_{\alpha\beta} := \nablai_\alpha \psi \nablai_\beta \psi - \frac{1}{2} \gi_{\alpha\beta} \nablai^\mu \psi \nablai_\mu \psi \text{.} \end{equation}
\end{itemize}

\subsubsection{Algebraic Expansions}

The first step is an algebraic expansion of the expression $\mc{L} \psi S_w \psi$.
From this, we can obtain a pointwise lower bound for $| \mc{L} \psi |$.

\begin{lemma} \label{thml.algebraic}
Recall the notations \eqref{eql.psi_phi}-\eqref{eql.Q}, and define also the following:
\begin{align}
\label{eql.algebraic_coeff} \Lambda = - ( F^\prime )^2 \bar{\ell} - F^\prime F^{\prime\prime} \ell^2 - ( F^\prime )^2 \ell h \text{,} &\qquad \mc{E} = 2 F^\prime h + F^{\prime\prime} \ell \text{,} \\
\notag \pi_{\alpha\beta} = -\nablai_{\alpha\beta} f + h \gi_{\alpha\beta} \text{,} &\qquad P_\beta = Q_{\alpha\beta} \nablai^\alpha f + P^\flat_\beta + P^\sharp_\beta \text{,} \\
\notag P^\sharp_\beta = - w^\prime \cdot \psi \nablai_\beta \psi + \frac{1}{2} \nablai_\beta w^\prime \cdot \psi^2 \text{,} &\qquad P^\flat_\beta = \frac{1}{2} \lambda^2 \ell ( F^\prime )^2 \nablai_\beta f \cdot \psi^2 \text{.}
\end{align}
Then, the following inequality holds:
\begin{align}
\label{eql.algebraic} ( F^\prime )^{-1} | \mc{L} \psi |^2 &\geq 3 \lambda^2 F^\prime \cdot | S_w \psi |^2 + 2 \lambda \pi_{\alpha\beta} \cdot \nablai^\alpha \psi \nablai^\beta \psi + 2 \lambda^3 \Lambda \cdot \psi^2 \\
\notag &\qquad + 2 \lambda^2 \mc{E} \cdot \psi S_w \psi - \lambda \Boxi w \cdot \psi^2 + 2 \lambda \nablai^\beta P_\beta \text{.}
\end{align}
\end{lemma}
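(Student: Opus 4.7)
The starting point is to expand the conjugated operator via the product rule:
\[ \mc{L}\psi = \Boxi\psi + 2\lambda F' S\psi + [\lambda^2(F')^2\ell + \lambda F''\ell + \lambda F'\Boxi f]\psi. \]
Using $S\psi = S_w\psi + w'\psi$ together with the defining relation $2w' + \Boxi f = 2h$ from \eqref{eq.carleman_f_psf}, this rearranges to $\mc{L}\psi = B\psi + 2\lambda F' S_w\psi$, where $B\psi := \Boxi\psi + (\lambda\mc{E} + \lambda^2 D)\psi$ with $D := (F')^2 \ell$. The pivotal algebraic observation is then the perfect-square identity
\[ (F')^{-1}|\mc{L}\psi|^2 = (F')^{-1}|B\psi + \lambda F' S_w\psi|^2 + 3\lambda^2 F' |S_w\psi|^2 + 2\lambda B\psi\cdot S_w\psi, \]
so discarding the first (non-negative) term reduces \eqref{eql.algebraic} to showing the pointwise identity
\[ 2\lambda B\psi\cdot S_w\psi = 2\lambda\nablai^\beta P_\beta + 2\lambda\pi_{\alpha\beta}\nablai^\alpha\psi\nablai^\beta\psi - \lambda\Boxi w\cdot\psi^2 + 2\lambda^2\mc{E}\psi S_w\psi + 2\lambda^3\Lambda\psi^2. \]

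I would verify this identity piece by piece, following the decomposition $B\psi = \Boxi\psi + \lambda\mc{E}\psi + \lambda^2 D\psi$. For the top-order piece $2\lambda\Boxi\psi\cdot S_w\psi$, the standard multiplier identity $\nablai^\beta(Q_{\alpha\beta}\nablai^\alpha f) = \Boxi\psi\cdot\nablai_\alpha\psi\nablai^\alpha f + Q_{\alpha\beta}\nablai^2_{\alpha\beta}f$ together with the definition of $\pi$ yields $\Boxi\psi\cdot S\psi = \nablai^\beta(Q_{\alpha\beta}\nablai^\alpha f) + \pi_{\alpha\beta}\nablai^\alpha\psi\nablai^\beta\psi - w'|\nablai\psi|^2$; the residual $-w'\psi\Boxi\psi$ from $S_w\psi = S\psi - w'\psi$ combines with $-w'|\nablai\psi|^2$ via $-w'\nablai^\beta(\psi\nablai_\beta\psi)$ and a further integration by parts to produce $\nablai^\beta P^\sharp_\beta - \tfrac{1}{2}\Boxi w\cdot\psi^2$. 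The middle piece $2\lambda^2\mc{E}\psi S_w\psi$ appears on the nose. For the bottom piece, I would use the identity $2D\psi S\psi = \nablai^\alpha(D\nablai_\alpha f\cdot\psi^2) - (SD + D\Boxi f)\psi^2$, compute $SD = 2F'F''\ell^2 + 2(F')^2\bar\ell$ directly from the definitions of $D$ and $\bar\ell$, and then rewrite the $\psi^2$-coefficient of $D\psi S_w\psi = D\psi S\psi - Dw'\psi^2$ using $-2Dw' = -2(F')^2\ell h + (F')^2\ell\Boxi f$. The $\Boxi f$ contributions cancel, leaving exactly $-2\Lambda$ as the algebraic coefficient and $2\lambda\nablai^\beta P^\flat_\beta$ as the divergence, so $2\lambda^3 D\psi S_w\psi = 2\lambda\nablai^\beta P^\flat_\beta + 2\lambda^3\Lambda\psi^2$.

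Summing the three contributions reproduces precisely the target identity for $2\lambda B\psi\cdot S_w\psi$ and completes the proof. The main obstacle is purely bookkeeping: the cancellation of $\Boxi f$ between the $SD + D\Boxi f$ term and the $-2Dw'$ term is dictated entirely by the relation $h = w' + \tfrac{1}{2}\Boxi f$ from \eqref{eq.carleman_f_psf}, and it is this cancellation that forces the specific forms of $\Lambda$, $\mc{E}$, $P^\flat_\beta$, and $P^\sharp_\beta$ recorded in \eqref{eql.algebraic_coeff}. Beyond this, the argument is purely algebraic and uses only the product rule, the definition of the stress-energy tensor, and the identity $\nablai^\beta Q_{\alpha\beta} = \Boxi\psi\cdot\nablai_\alpha\psi$.
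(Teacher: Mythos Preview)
Your proof is correct and follows essentially the same route as the paper. The only organizational difference is that you obtain the inequality by writing $\mc{L}\psi = B\psi + 2\lambda F' S_w\psi$ and expanding $(F')^{-1}|\mc{L}\psi|^2$ as a perfect square plus remainder, whereas the paper first derives the exact identity for $\mc{L}\psi\cdot S_w\psi$ (your $B\psi\cdot S_w\psi$ plus the $2\lambda F'|S_w\psi|^2$ term) and then applies the Young inequality $\mc{L}\psi\, S_w\psi \le \tfrac{1}{2}\lambda^{-1}(F')^{-1}|\mc{L}\psi|^2 + \tfrac{1}{2}\lambda F'|S_w\psi|^2$; these two moves are algebraically identical. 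The verification of the divergence identities for $\Boxi\psi\cdot S_w\psi$ and $D\psi\cdot S_w\psi$, including the cancellation of the $\Boxi f$ terms via $w' = h - \tfrac{1}{2}\Boxi f$, matches the paper's computations line for line. One small notational point: your ``$|\nablai\psi|^2$'' in the top-order step must be read as the metric contraction $\nablai^\alpha\psi\,\nablai_\alpha\psi$ (which is not sign-definite in Lorentzian signature), not the positive frame norm of \eqref{eq.norm_cov}; with that reading the bookkeeping is correct.
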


\begin{proof}
We begin by expanding $\mc{L} \psi$ as follows:
\begin{align*}
\mc{L} \psi &= \Boxi \psi + 2 \lambda \cdot \nablai^\alpha F \nablai_\alpha \psi + e^{-\lambda F} \Boxi e^{\lambda F} \cdot \psi \\
&= \Boxi \psi + 2 \lambda F^\prime \cdot S \psi + \lambda^2 \nablai^\alpha F \nablai_\alpha F \cdot \psi + \lambda \Boxi F \cdot \psi \\
&= \Boxi \psi + 2 \lambda F^\prime \cdot S \psi + \lambda^2 ( F^\prime )^2 \ell \cdot \psi + \lambda F^{\prime\prime} \ell \cdot \psi + \lambda F^\prime \Box f \cdot \psi \\
&= \Boxi \psi + 2 \lambda F^\prime \cdot S_w \psi + \lambda^2 ( F^\prime )^2 \ell \cdot \psi + \lambda \mc{E} \cdot \psi \text{,}
\end{align*}
Multiplying the above by $S_w \psi$ yields
\begin{equation} \label{eql.algebraic_1} \mc{L} \psi S_w \psi = \Boxi \psi S_w \psi + 2 \lambda F^\prime \cdot | S_w \psi |^2 + \lambda^2 ( F^\prime )^2 \ell \cdot \psi S_w \psi + \lambda \mc{E} \cdot \psi S_w \psi \text{.} \end{equation}

Letting $\mc{A} = \lambda^2 ( F^\prime )^2 \ell$, then the product rule implies
\begin{align}
\label{eql.product_rule} \mc{A} \cdot \psi S_w \psi &= \frac{1}{2} \mc{A} \cdot \nablai^\beta f \nablai_\beta ( \psi^2 ) - \mc{A} w^\prime \cdot \psi^2 \\
\notag &= \frac{1}{2} \nablai^\beta ( \mc{A} \nablai_\beta f \cdot \psi^2 ) - \frac{1}{2} \nablai^\beta f \nablai_\beta \mc{A} \cdot \psi^2 - \mc{A} h \cdot \psi^2 \\ 
\notag &= \nablai^\beta P^\flat_\beta + \lambda^2 \Lambda \cdot \psi^2 \text{,}
\end{align}
where in the last step, we recalled \eqref{eql.algebraic_coeff}, and we observed that
\begin{align*}
- \frac{1}{2} \nablai^\beta f \nablai_\beta \mc{A} - \mc{A} h &= - \frac{1}{2} \lambda^2 \nablai^\beta f \nablai_\beta \ell \cdot ( F^\prime )^2 - \lambda^2 F^\prime \ell \cdot \nablai^\beta f \nablai_\beta ( F^\prime ) - \lambda^2 ( F^\prime )^2 \ell h \\
&= - \lambda^2 ( F^\prime )^2 \bar{\ell} - \lambda^2 F^\prime F^{\prime\prime} \ell^2 - \lambda^2 ( F^\prime )^2 \ell h \text{.}
\end{align*}
Combining \eqref{eql.algebraic_1} and \eqref{eql.product_rule}, we see that
\begin{equation} \label{eql.algebraic_2} \mc{L} \psi S_w \psi = \Boxi \psi S_w \psi + 2 \lambda F^\prime \cdot | S_w \psi |^2 + \lambda^2 \Lambda \cdot \psi^2 + \lambda \mc{E} \cdot \psi S_w \psi + \nablai^\beta P^\flat_\beta \text{.} \end{equation}

Next, recalling the stress-energy tensor $Q$ in \eqref{eql.Q}, we compute
\begin{align*}
\nablai^\beta ( Q_{\alpha\beta} \nablai^\alpha f ) &= \Boxi \psi S \psi + \nablai^2_{\alpha\beta} f \cdot \nablai^\alpha \psi \nablai^\beta \psi - \frac{1}{2} \Boxi f \cdot \nablai^\beta \psi \nablai_\beta \psi \text{,} \\
\nablai^\beta P^\sharp_\beta &= - w^\prime \cdot \psi \Boxi \psi - w^\prime \cdot \nablai^\beta \psi \nablai_\beta \psi + \frac{1}{2} \Boxi w \cdot \psi^2 \text{.}
\end{align*}
Summing the above identities, we obtain
\begin{equation} \label{eql.algebraic_3} \nablai^\beta ( Q_{\alpha\beta} \nablai^\alpha f + P^\sharp_\beta ) = \Boxi \psi S_w \psi - \pi_{\alpha\beta} \cdot \nablai^\alpha \psi \nablai^\beta \psi + \frac{1}{2} \Boxi w \cdot \psi^2 \text{.} \end{equation}
Combining \eqref{eql.algebraic_2} and \eqref{eql.algebraic_3} yields
\begin{align}
\label{eql.algebraic_4} \mc{L} \psi S_w \psi &= 2 \lambda F^\prime \cdot | S_w \psi |^2 + \pi_{\alpha\beta} \cdot \nablai^\alpha \psi \nablai^\beta \psi + \lambda^2 \Lambda \cdot \psi^2 \\
\notag &\qquad + \lambda \mc{E} \cdot \psi S_w \psi - \frac{1}{2} \Boxi w \cdot \psi^2 + \nablai^\beta P_\beta \text{.}
\end{align}
Finally, \eqref{eql.algebraic} follows immediately from \eqref{eql.algebraic_4} and the following basic inequality:
\[ \mc{L} \psi S_w \psi \leq \frac{1}{2} \lambda^{-1} ( F^\prime )^{-1} \cdot | \mc{L} \psi |^2 + \frac{1}{2} \lambda F^\prime \cdot | S_w \psi |^2 \text{.} \qedhere \]
\end{proof}

\subsubsection{Positivity and Error Estimates}

We next show that, except for the divergence term, the right-hand side of \eqref{eql.algebraic} is positive.
The first step of this process is to show that $\Lambda$ is positive and that $\mc{E}$ is appropriately bounded.
More specifically, $\Lambda$ represents the weight of the zero-order terms in the Carleman estimates, and it must absorb all the other zero-order weights in our derivation (including $\mc{E}$).

\begin{lemma} \label{thml.Lambda_E}
The following estimates hold:
\begin{equation} \label{eql.Lambda_E} \Lambda \simeq f F^\prime G \text{,} \qquad | \mc{E} | \lesssim G \text{.} \end{equation}
Moreover, for sufficiently large $\lambda$, the following inequality holds:
\begin{equation} \label{eql.algebraic_Lambda} ( F^\prime )^{-1} | \mc{L} \psi |^2 \geq 2 \lambda^2 F^\prime \cdot | S_w \psi |^2 + 2 \lambda \pi_{\alpha\beta} \cdot \nablai^\alpha \psi \nablai^\beta \psi + \lambda^3 \Lambda \cdot \psi^2 + 2 \lambda \nablai^\beta P_\beta \text{,} \end{equation}
\end{lemma}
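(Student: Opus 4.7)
The plan is to first extract the leading-order behavior of $\Lambda$ and $\mc{E}$ via the preliminary Lemmas \ref{thml.F_est}, \ref{thml.G_est}, \ref{thml.nor_decomp}, and \ref{thml.hessian_decomp}, and then to derive \eqref{eql.algebraic_Lambda} from \eqref{eql.algebraic} by absorbing the cross term $2\lambda^2\mc{E}\cdot\psi S_w\psi$ and the zero-order error $-\lambda\Boxi w\cdot\psi^2$ into the positive main terms, using that $\lambda$ is large.

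For the leading order computations, I would write $\bar\ell=\tfrac{f}{2}+r_1$, $\ell=f+r_2$, $h=\tfrac{1}{2}+r_3$, where Lemmas \ref{thml.nor_decomp} and \ref{thml.hessian_decomp} give $F'|r_1|\ll fG$, $F'|r_2|\ll\Psi$, and $F'|r_3|\ll G$. Plugging these into $\Lambda=-(F')^2\bar\ell-F'F''\ell^2-(F')^2\ell h$ and using the key algebraic identity
\[
(fF')'=F'+fF''=-G,
\]
the principal part collapses to $-F'f(fF')'=fF'G$. The error terms are controlled as follows: $(F')^2|r_1|\ll fF'G$ directly; the mixed term $(F')^2 f r_3$ is $\ll fF'G$ via $F'|r_3|\ll G$; the term $(F')^2 r_2$ satisfies $(F')^2|r_2|\ll F'\Psi\ll G\lesssim fF'G$ by Lemma \ref{thml.G_est} together with $fF'\gtrsim 1$; the remaining $F'F''(\ell^2-f^2)$ term is bounded similarly using $f|F''|\lesssim F'$ and $\Psi\ll fG$. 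This yields $\Lambda\simeq fF'G$. The estimate $|\mc{E}|\lesssim G$ is obtained by the same leading-plus-error decomposition: $\mc{E}=2F'h+F''\ell$ has leading part $F'+fF''=-G$, while the errors $2F'|h-\tfrac{1}{2}|$ and $|F''||\ell-f|\lesssim(F'/f)(\Psi/F')=\Psi/f$ are both $\ll G$ by Lemmas \ref{thml.G_est} and \ref{thml.hessian_decomp}.

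For the inequality \eqref{eql.algebraic_Lambda}, start from \eqref{eql.algebraic}. The cross term is handled by Cauchy--Schwarz,
\[
|2\lambda^2\mc{E}\cdot\psi S_w\psi|\ \le\ \lambda^2 F'\cdot|S_w\psi|^2+\lambda^2\frac{\mc{E}^2}{F'}\cdot\psi^2,
\]
which uses one of the three units of $\lambda^2 F'|S_w\psi|^2$ available in \eqref{eql.algebraic} and leaves $2\lambda^2 F'|S_w\psi|^2$ intact. It then suffices to verify
\[
2\lambda^3\Lambda\ \ge\ \lambda^3\Lambda+\lambda^2\frac{\mc{E}^2}{F'}+\lambda|\Boxi w|,
\]
i.e.\ $\lambda^3\Lambda\gtrsim\lambda^2\mc{E}^2/F'+\lambda|\Boxi w|$. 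For the first term, $\mc{E}^2/F'\lesssim G^2/F'$, and since $G\lesssim F'$ (Lemma \ref{thml.G_est}) and $fF'\gtrsim 1$ (Lemma \ref{thml.F_est}), we get $G^2/F'\lesssim G\lesssim f(F')^2$, so that $\lambda^2\mc{E}^2/F'\lesssim\lambda^2 f(F')^2\le\lambda^3 fF'G\simeq\lambda^3\Lambda$ once $\lambda$ is large. For the second, $|\Boxi w|\lesssim fF'G\simeq\Lambda$ by \eqref{eql.carleman_f_ass_w}, so $\lambda|\Boxi w|\lesssim\lambda\Lambda\le\lambda^3\Lambda$ for $\lambda\ge 1$.

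The only real obstacle I anticipate is in the first part: correctly identifying the remarkable cancellation that makes the leading parts of $\Lambda$ and $\mc{E}$ collapse to $\pm fF'G$ via the identity $(fF')'=-G$, and then bookkeeping the many error contributions so that each is genuinely $\ll fF'G$ (respectively $\ll G$) using only the small-factor hypotheses \eqref{eql.carleman_f_ass_strong}--\eqref{eql.carleman_f_ass_w} and the budget $\Psi\ll fG$. The absorption step afterwards is routine since all positive terms come with a factor of $\lambda$, $\lambda^2$, or $\lambda^3$ and all error terms with strictly fewer factors.
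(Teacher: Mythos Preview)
Your argument is essentially the paper's own proof: the same leading-order extraction via $(fF')'=-G$, the same error bookkeeping through Lemmas \ref{thml.nor_decomp} and \ref{thml.hessian_decomp}, and the same Cauchy--Schwarz absorption of the cross term. The treatment of $\Lambda$, $\mc{E}$, and $|\Boxi w|$ is correct.

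There is one slip in the absorption of $\mc{E}^2/F'$. After correctly reaching $G^2/F'\lesssim G$, you pass through $G\lesssim f(F')^2$ and then assert $\lambda^2 f(F')^2\le \lambda^3 fF'G$. That last step is equivalent to $F'\le \lambda G$, which is \emph{not} uniformly true on $\mc{D}$: for $F=F_1$ one has $F'/G\simeq p^{-2}f^{-p}$, unbounded as $f\searrow 0$, so no fixed $\lambda$ works. The detour is unnecessary: from $G^2/F'\lesssim G$ and $fF'\gtrsim 1$ (Lemma \ref{thml.F_est}) you get directly
\[
\frac{\mc{E}^2}{F'}\ \lesssim\ G\ \lesssim\ fF'G\ \simeq\ \Lambda,
\]
hence $\lambda^2\mc{E}^2/F'\lesssim \lambda^2\Lambda$, which is absorbed into $\lambda^3\Lambda$ for large $\lambda$. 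This is exactly how the paper closes the estimate. With this one-line fix your proof is complete.
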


\begin{proof}
We begin with the bound for $\mc{E}$.
From its definition in \eqref{eql.algebraic_coeff}, we can write
\begin{align}
\label{eql.E_1} \mc{E} &= F^\prime + f F^{\prime\prime} + F^\prime ( 2 h - 1 ) + F^{\prime\prime} ( \ell - f ) \\
\notag &= - G + F^\prime ( 2 h - 1 ) + F^{\prime\prime} ( \ell - f ) \text{.}
\end{align}
By \eqref{eql.F_est}, \eqref{eql.Psi_est}, \eqref{eql.nor_est}, and \eqref{eql.hessian_est}, we estimate
\[ F^\prime | 2 h - 1 | \ll G \text{,} \qquad F^{\prime\prime} | \ell - f | \lesssim f^{-1} F^\prime | \ell - f | \ll f^{-1} \Psi \ll G \text{.} \]
Combining the above inequalities with \eqref{eql.E_1} results in the inequality for $\mc{E}$ in \eqref{eql.Lambda_E}.

Next, for $\Lambda$, we expand its definition in \eqref{eql.algebraic_coeff}:
\begin{equation} \label{eql.Lambda_1} \Lambda = - \frac{1}{2} ( F^\prime )^2 f - F^\prime F^{\prime\prime} f^2 - \frac{1}{2} ( F^\prime )^2 f + E_\Lambda = f F^\prime G + E_\Lambda \text{,} \end{equation}
where the error terms $E_\Lambda$ are given by
\[ E_\Lambda = - F^\prime F^{\prime\prime} ( \ell^2 - f^2 ) - ( F^\prime )^2 \left( \bar{\ell} - \frac{1}{2} f \right) - ( F^\prime )^2 \left( \ell h - \frac{1}{2} f \right) \text{.} \]
We can then estimate $E_\Lambda$ using \eqref{eql.F_est}, \eqref{eql.nor_est}, and \eqref{eql.hessian_est}:
\begin{align*}
| E_\Lambda | &\lesssim F^\prime \cdot f F^{\prime\prime} \cdot | \ell - f | + ( F^\prime )^2 \left( \left| \bar{\ell} - \frac{1}{2} f \right| + | \ell - f | + f \left| h - \frac{1}{2} \right| \right) \\
&\ll F^\prime \Psi + f F^\prime G \text{.}
\end{align*}
Thus, \eqref{eql.Psi_est} yields $| E_\Lambda | \ll f F^\prime G$, which with \eqref{eql.Lambda_1} implies the first part of \eqref{eql.Lambda_E}.

Finally, for \eqref{eql.algebraic_Lambda}, we first observe that
\begin{equation} \label{eql.algebraic_Lambda_1} 2 \lambda^2 \mc{E} \cdot \psi S_w \psi \leq \lambda^2 F^\prime \cdot | S_w \psi |^2 + \lambda^2 ( F^\prime )^{-1} \mc{E}^2 \cdot \psi^2 \text{.} \end{equation}
Moreover, \eqref{eql.F_est}, \eqref{eql.G_est}, \eqref{eql.carleman_f_ass_w}, and \eqref{eql.Lambda_E}, we have
\begin{align*}
\lambda | \Boxi w | \cdot \psi^2 &\lesssim \lambda f F^\prime G \cdot \psi^2 \text{,} \\
\lambda^2 ( F^\prime )^{-1} \mc{E}^2 \cdot \psi^2 &\lesssim \lambda^2 ( F^\prime )^{-1} G^2 \cdot \psi^2 \lesssim \lambda^2 f F^\prime G \cdot \psi^2 \text{.}
\end{align*}
Thus, applying the above, along with the first bound in \eqref{eql.Lambda_E}, we see that
\begin{equation} \label{eql.algebraic_Lambda_2} \lambda | \Boxi w | \cdot \psi^2 + \lambda^2 ( F^\prime )^{-1} \mc{E}^2 \cdot \psi^2 \leq \lambda^3 \Lambda \cdot \psi^2 \text{,} \end{equation}
for sufficiently large $\lambda$.
Applying \eqref{eql.algebraic_Lambda_1} and \eqref{eql.algebraic_Lambda_2} to \eqref{eql.algebraic} results in \eqref{eql.algebraic_Lambda}.
\end{proof}

The remaining estimate deals with terms in \eqref{eql.algebraic_Lambda} that are quadratic in $\nablai \psi$.

\begin{lemma} \label{thml.pi_deriv}
There exist constants $\mc{C}_1, \mc{C}_2 > 0$ such that
\begin{align}
\label{eql.pi_deriv} F^\prime \cdot | S_w \psi |^2 + \Lambda \cdot \psi^2 &\geq \mc{C}_1 G \cdot | S \psi |^2 \text{,} \\
\notag G \cdot | S \psi |^2 + \pi_{\alpha\beta} \cdot \nablai^\alpha \psi \nablai^\beta \psi &\geq \mc{C}_2 f G \cdot | \nablai_{ \Ei{n} } \psi |^2 + \mc{C}_2 \Psi \cdot \sum_{i = 0}^{n - 1} | \nablai_{ \Ei{i} } \psi |^2 \text{.}
\end{align}
In particular, if $\lambda$ is sufficiently large, then there exists $\mc{C} > 0$ such that 
\begin{align}
\label{eql.algebraic_pi} ( F^\prime )^{-1} | \mc{L} \psi |^2 &\geq \mc{C} \lambda f G \cdot | \nablai_{ \Ei{n} } \psi |^2 + \mc{C} \lambda \Psi \cdot \sum_{i = 0}^{n - 1} | \nablai_{ \Ei{i} } \psi |^2 \\
\notag &\qquad + \mc{C} \lambda^3 \Lambda \cdot \psi^2 + 2 \lambda \nablai^\beta P_\beta \text{.}
\end{align}
\end{lemma}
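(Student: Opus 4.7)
\medskip

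\noindent\textbf{Proof plan.}
The plan is to prove \eqref{eql.pi_deriv} algebraically, exploiting the structure $S_w\psi = S\psi - w'\psi$ for the first inequality and a frame decomposition of $\nablai\psi$ for the second, and then to combine the two inequalities with \eqref{eql.algebraic_Lambda} by a standard ``choose $\lambda$ large'' argument for the third.

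\medskip

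\noindent\textbf{Step 1 (the $|S\psi|^2$ lower bound).}
From \eqref{eql.w_prime}, \eqref{eql.hessian_est}, and \eqref{eql.carleman_f_ass_weak} one sees that $w' = h - \tfrac{1}{2}\Boxi f$ is uniformly bounded (in fact, close to $-(n-1)/4$). Expanding
\[ |S\psi|^2 \leq 2|S_w\psi|^2 + 2(w')^2\psi^2 \]
and multiplying by $G$ yields, using $G\lesssim F'$ from \eqref{eql.G_est},
\[ G|S\psi|^2 \leq 2F'|S_w\psi|^2 + 2G(w')^2\psi^2\,. \]
Since $(w')^2$ is bounded and $\Lambda\simeq fF'G\gtrsim G$ by \eqref{eql.Lambda_E} and \eqref{eql.F_est} (which give $fF'\gtrsim 1$), we can dominate $2G(w')^2\psi^2$ by a fraction of $\Lambda\psi^2$. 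Rearranging proves \eqref{eql.pi_deriv}$_1$ for a suitably small $\mc{C}_1>0$.

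\medskip

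\noindent\textbf{Step 2 (the $\pi$-based coercivity).}
Decompose $\nablai\psi = (\nablai_{\Ei{n}}\psi)\,\Ei{n} + T$, where $T$ is tangent to the level sets of $f$ and $|T|^2 = \sum_{i=0}^{n-1}|\nablai_{\Ei{i}}\psi|^2$. Since $S\psi = \ell^{1/2}\nablai_{\Ei{n}}\psi$, Lemma~\ref{thml.nor_decomp} gives $|S\psi|^2\simeq f\,|\nablai_{\Ei{n}}\psi|^2$, which already produces the claimed normal-derivative term. For the rest, write
\[ \pi_{\alpha\beta}\nablai^\alpha\psi\nablai^\beta\psi = (\nablai_{\Ei{n}}\psi)^2\pi(\Ei{n},\Ei{n}) + 2(\nablai_{\Ei{n}}\psi)\,\pi(\Ei{n},T) + \pi(T,T)\,. \]
The pseudo-convexity assumption \eqref{eql.carleman_f_ass_pseudoconvex} gives $\pi(T,T)\simeq\Psi|T|^2$; the assumption \eqref{eql.carleman_f_ass_strong} (with the orthonormality of the frame) gives $|\pi(\Ei{n},T)|\ll\Psi|T|$; and the near-cancellation \eqref{eql.carleman_f_ass_weak} together with Lemma~\ref{thml.hessian_decomp} gives $|\pi(\Ei{n},\Ei{n})|\ll G/F'$. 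Applying Young's inequality to the cross term $2(\nablai_{\Ei{n}}\psi)\pi(\Ei{n},T)$ costs only an arbitrarily small multiple of $\Psi|T|^2$ and of $\Psi(\nablai_{\Ei{n}}\psi)^2$. Adding $G|S\psi|^2\simeq fG\,|\nablai_{\Ei{n}}\psi|^2$ then absorbs every negative contribution to the $(\nablai_{\Ei{n}}\psi)^2$-coefficient, because both $G/F'\ll fG$ (equivalent to $fF'\gg 1$, provided by \eqref{eql.F_est}) and $\Psi\ll fG$ (from \eqref{eql.Psi_est}). This proves \eqref{eql.pi_deriv}$_2$.

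\medskip

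\noindent\textbf{Step 3 (assembly into \eqref{eql.algebraic_pi}).}
Starting from \eqref{eql.algebraic_Lambda}, split $2\lambda^2 F'|S_w\psi|^2 + \lambda^3\Lambda\psi^2$ as $\lambda^2(F'|S_w\psi|^2+\Lambda\psi^2) + (\lambda^3-\lambda^2)\Lambda\psi^2 + \lambda^2 F'|S_w\psi|^2$; by Step 1 this bounds $\mc{C}_1\lambda^2 G|S\psi|^2$ from below, plus $(\lambda^3-\lambda^2)\Lambda\psi^2$, plus the non-negative $\lambda^2 F'|S_w\psi|^2$. Rewriting
\[ \mc{C}_1\lambda^2 G|S\psi|^2 + 2\lambda\,\pi_{\alpha\beta}\nablai^\alpha\psi\nablai^\beta\psi = 2\lambda\bigl(G|S\psi|^2 + \pi_{\alpha\beta}\nablai^\alpha\psi\nablai^\beta\psi\bigr) + (\mc{C}_1\lambda^2-2\lambda)G|S\psi|^2\,, \]
the first group is handled by Step 2 and the second is non-negative for $\lambda$ large. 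Dropping the leftover positive terms and noting $\lambda^3-\lambda^2\geq\tfrac{1}{2}\lambda^3$ for $\lambda\geq 2$ yields \eqref{eql.algebraic_pi} with $\mc{C}=\min(2\mc{C}_2,\tfrac{1}{2})$.

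\medskip

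\noindent\textbf{Main obstacle.} The delicate point is the coercivity of $\pi$ in Step 2: the tangential positivity $\Psi|T|^2$ is small, and the off-diagonal term $\pi(\Ei{n},T)$ a priori carries a factor $\Psi$, while the bad normal-normal term $\pi(\Ei{n},\Ei{n})$ is of size $G/F'$. The cancellation is only possible because the hypotheses are calibrated so that $\Psi\ll fG$ and $fF'\gg 1$, and these are precisely the hierarchies collected in Lemmas~\ref{thml.F_est}-\ref{thml.G_est} and Proposition~\ref{thml.carleman_f_ass}.
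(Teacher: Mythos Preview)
Your proof is correct and follows essentially the same route as the paper: the same $|S\psi|^2\leq 2|S_w\psi|^2+2(w')^2\psi^2$ expansion for the first inequality, the same frame decomposition of $\pi_{\alpha\beta}\nablai^\alpha\psi\nablai^\beta\psi$ into normal--normal, normal--tangential, and tangential--tangential pieces for the second, and the same ``take $\lambda$ large'' combination with \eqref{eql.algebraic_Lambda} for \eqref{eql.algebraic_pi} (your Step 3 merely makes explicit what the paper states in one line). One small phrasing slip in Step 2: the assertion ``$G/F'\ll fG$'' is literally only $\lesssim$ from \eqref{eql.F_est}; the $\ll$ needed for absorption actually comes from your earlier bound $|\pi(\Ei{n},\Ei{n})|\ll G/F'$, which composed with $G/F'\lesssim fG$ gives $|\pi(\Ei{n},\Ei{n})|\ll fG$, exactly as in the paper's \eqref{eql.pi_deriv_12}.
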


\begin{proof}
First, note that \eqref{eql.algebraic_pi} follows by combining \eqref{eql.algebraic_Lambda} and \eqref{eql.pi_deriv} and by taking large enough $\lambda$.
Moreover, observe that, by \eqref{eql.F_est} and \eqref{eql.G_est}, we obtain
\[ G \cdot | S \psi |^2 \lesssim G \cdot | S_w \psi |^2 + G | w^\prime |^2 \cdot \psi^2 \lesssim F^\prime \cdot | S_w \psi |^2 + f F^\prime G | w^\prime |^2 \cdot \psi^2 \text{.} \]
Since $| w^\prime | \lesssim 1$ by \eqref{eql.G_est} and \eqref{eql.carleman_f_ass_w}, then the above and \eqref{eql.Lambda_E} imply the first inequality in \eqref{eql.pi_deriv}.
Thus, it remains only to prove the second inequality in \eqref{eql.pi_deriv}.

For this, we begin by decomposing
\begin{align}
\label{eql.deformation_decomp} \pi_{\alpha\beta} \cdot \nablai^\alpha \psi \nablai^\beta \psi &= \pi_{ \Ei{n} \Ei{n} } | \nablai_{ \Ei{n} } \psi |^2 + \sum_{i, j = 0}^{n-1} \pi_{ \Ei{i} \Ei{j} } \nablai_{ \Ei{i} } \psi \nablai_{ \Ei{j} } \psi \\
\notag &\qquad + 2 \sum_{i = 0}^{n-1} \pi_{ \Ei{n} \Ei{i} } \nablai_{ \Ei{n} } \psi \nablai_{ \Ei{i} } \psi \\
\notag &= \mc{K}_{\perp\perp} + \mc{K}_{\|\|} + \mc{K}_{\perp\|} \text{,}
\end{align}
By the assumption \eqref{eql.carleman_f_ass_pseudoconvex}, we have the lower bound
\[ \mc{K}_{\|\|} \gtrsim \Psi \cdot \sum_{i = 0}^{n-1} | \nablai_{ \Ei{i} } \psi |^2 \text{.} \]
Thus, by \eqref{eql.deformation_decomp} and the above, we obtain, for some $\mc{C}^\prime > 0$, that
\begin{equation} \label{eql.pi_deriv_1} \pi_{\alpha\beta} \cdot \nablai^\alpha \psi \nablai^\beta \psi \geq \mc{C}^\prime \Psi \cdot \sum_{i = 0}^{n-1} | \nablai_{ \Ei{i} } \psi |^2 - | \mc{K}_{\perp\perp} | - | \mc{K}_{\perp\|} | \text{,} \end{equation}
Moreover, recalling \eqref{eql.nor_est}, we see that
\begin{equation} \label{eql.pi_deriv_2} G \cdot | S \psi |^2 = \ell G \cdot | \nabla_{ \Ei{n} } \psi |^2 \text{.} \end{equation}

To control $\mc{K}_{\perp\|}$, we expand and bound using \eqref{eql.carleman_f_ass_strong}:
\begin{equation} \label{eql.pi_deriv_11} | \mc{K}_{\perp\|} | \lesssim \sum_{i = 0}^{n-1} | \nablai^2_{ \Ei{n} \Ei{i} } f | | \nablai_{ \Ei{n} } \psi | | \nablai_{ \Ei{i} } \psi | \ll \Psi \cdot \sum_{\alpha = 0}^n | \nablai_{ \Ei{\alpha} } \psi |^2 \text{.} \end{equation}
Furthermore, for $\mc{K}_{\perp\perp}$, we expand
\[ \mc{K}_{\perp\perp} = \left( \frac{1}{2} - \nablai^2_{ \Ei{n} \Ei{n} } f \right) \cdot | \nablai_{ \Ei{n} } \psi |^2 + \left( h - \frac{1}{2} \right) \cdot | \nablai_{ \Ei{n} } \psi |^2 \text{.} \]
Applying \eqref{eql.carleman_f_ass_weak} and \eqref{eql.hessian_est}, we then estimate
\begin{equation} \label{eql.pi_deriv_12} | \mc{K}_{\perp\perp} | \ll ( F^\prime )^{-1} G \cdot | \nablai_{ \Ei{n} } \psi |^2 \lesssim f G \cdot | \nablai_{ \Ei{n} } \psi |^2 \text{.} \end{equation}
Combining \eqref{eql.nor_est} and \eqref{eql.pi_deriv_1}-\eqref{eql.pi_deriv_12} results in the second inequality of \eqref{eql.pi_deriv}.
\end{proof}

\subsubsection{Integral and Boundary Estimates}

Having obtained a pointwise lower bound for $| \mc{L} \psi |$ from \eqref{eql.algebraic_pi}, it remains to integrate this over $\mc{D}$.
The only term on the right-hand side of \eqref{eql.algebraic_pi} which needs not be nonnegative is the divergence of $P$, which contributes  boundary terms after integration.
Here, we show, by integrating over $D_k$ and passing to the limit, that this boundary contribution vanishes.

\begin{lemma} \label{thml.boundary}
The following limit holds:
\begin{equation} \label{eql.boundary} \lim_{k \nearrow \infty} \int_{ D_k } \nablai^\beta P_\beta = 0 \text{.} \end{equation}
As a result, for sufficiently large $\lambda$, we have that
\begin{align}
\label{eql.algebraic_integral} \int_{ \mc{D} } ( F^\prime )^{-1} | \mc{L} \psi |^2 &\gtrsim \lambda \int_{ \mc{D} } \left( f G \cdot | \nablai_{ \Ei{n} } \psi |^2 + \Psi \cdot \sum_{i = 0}^{n - 1} | \nablai_{ \Ei{i} } \psi |^2 \right) \\
\notag &\qquad + \lambda^3 \int_{ \mc{D} } f F^\prime G \cdot \psi^2 \text{.}
\end{align}
\end{lemma}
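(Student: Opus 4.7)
The plan has two parts. First, to prove \eqref{eql.boundary}, I would apply the divergence theorem on each compact domain $D_k$ and then bound the resulting boundary integrand pointwise by a quantity to which the vanishing hypothesis \eqref{eql.carleman_f_ass_vanishing} applies. Second, I would integrate the pointwise bound \eqref{eql.algebraic_pi} over $D_k$ and pass to the limit, using \eqref{eql.boundary} to discard the divergence contribution and the equivalence $\Lambda \simeq f F' G$ to reach the stated form.

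For the first part, the divergence theorem yields $\int_{D_k} \nablai^\beta P_\beta = \int_{\partial D_k} P_\beta \nu_k^\beta$, so I would split $P$ into its three pieces $Q_{\alpha\beta} \nablai^\alpha f$, $P^\flat_\beta$, and $P^\sharp_\beta$ from \eqref{eql.algebraic_coeff} and bound each separately. Rewriting in terms of $\phi$ via $\psi = e^{-\lambda F} \phi$ produces
\[ \psi^2 = e^{-2 \lambda F} \phi^2 \text{,} \qquad |\nablai \psi|^2 \lesssim e^{-2 \lambda F} \bigl( \lambda^2 (F')^2 |\nablai f|^2 \phi^2 + |\nablai \phi|^2 \bigr) \text{,} \]
and, together with $|\nablai f| \simeq f^{1/2}$ from \eqref{eql.nor_est}, each piece of $P_\beta \nu_k^\beta$ is controlled by a constant multiple of
\[ \bigl( 1 + \lambda^2 (F')^2 f \bigr) e^{-2 \lambda F} \bigl( |\nu_k| + |\nablai_{\nu_k} w| \bigr) \bigl( \phi^2 + |\nablai \phi|^2 \bigr) \text{;} \]
the $|\nablai_{\nu_k} w|$ factor arises precisely from the second term of $P^\sharp_\beta$.

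The main obstacle is then to control the $\lambda$-dependent prefactors by a pure exponential weight in $F$. Here Lemma \ref{thml.F_est} is the key input: for $\lambda$ sufficiently large, $F' \lesssim e^{-\lambda F}$, whence $(F')^2 \lesssim e^{-2 \lambda F}$, and since $f$ is bounded on $\mc{D}$, for each fixed $\lambda$ there exists $N = N(\lambda) > 0$ such that
\[ |P_\beta \nu_k^\beta| \leq C_\lambda \, e^{-N F} \bigl( |\nu_k| + |\nablai_{\nu_k} w| \bigr) \bigl( \phi^2 + |\nablai \phi|^2 \bigr) \text{.} \]
Invoking \eqref{eql.carleman_f_ass_vanishing} with this $N$ furnishes an exhaustion $(D_k)$ along which the right-hand side integrates to zero in the limit, which establishes \eqref{eql.boundary}.

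Finally, integrating \eqref{eql.algebraic_pi} over $D_k$ gives a lower bound for $\int_{D_k} (F')^{-1} |\mc{L} \psi|^2$ in terms of the three bulk terms (nonnegative by \eqref{eql.pi_deriv} and the first inequality in \eqref{eql.Lambda_E}) plus $2 \lambda \int_{D_k} \nablai^\beta P_\beta$. Monotone convergence takes the $D_k$-integrals of the nonnegative integrands to their $\mc{D}$-integrals, while the divergence term vanishes in the limit by \eqref{eql.boundary}. Replacing $\Lambda$ by $f F' G$ via the equivalence in Lemma \ref{thml.Lambda_E} then yields \eqref{eql.algebraic_integral}.
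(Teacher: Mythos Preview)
Your proposal is correct and follows essentially the same approach as the paper: divergence theorem on $D_k$, splitting $P$ into its three constituents, converting $\psi$ and $\nablai\psi$ back to $\phi$ and $\nablai\phi$, using $F'\lesssim e^{-\lambda F}$ from \eqref{eql.F_exp_est} to absorb the polynomial-in-$F'$ prefactors into a pure $e^{-NF}$ weight, and invoking \eqref{eql.carleman_f_ass_vanishing}; then integrating \eqref{eql.algebraic_pi} over $D_k$, using monotone convergence on the nonnegative bulk terms and \eqref{eql.boundary} on the divergence term. The only cosmetic difference is that the paper bounds the three boundary pieces $I^1_k, I^2_k, I^3_k$ separately rather than packaging them into the single combined bound you wrote down.
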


\begin{proof}
Recalling the definition of $P$ in \eqref{eql.algebraic_coeff}, the divergence theorem yields
\[ \int_{ D_k } \nablai^\beta P_\beta = \int_{ \partial D_k } Q_{\alpha\beta} \nu_k^\beta \nablai^\alpha f + \int_{ \partial D_k } P^\flat_\beta \nu_k^\beta + \int_{ \partial D_k } P^\sharp_\beta \nu_k^\beta = I^1_k + I^2_k + I^3_k \text{.} \]
Thus, to prove \eqref{eql.boundary}, it suffices to show that each $I^l_k$ vanishes as $k \nearrow \infty$.

Recalling \eqref{eql.psi_phi} and applying \eqref{eql.nor_est}, we obtain
\[ | \nablai \psi | \lesssim e^{-\lambda F} \sum_{\alpha = 0}^n | \nablai_{ \Ei{\alpha} } \phi | + \lambda e^{-\lambda F} | \nablai_{ \Ei{n} } F | \cdot | \phi | \lesssim e^{-\lambda F} | \nablai \phi | + \lambda e^{-\lambda F} f^\frac{1}{2} F^\prime \cdot | \phi | \text{,} \]
By \eqref{eql.F_exp_est}, we see that for sufficiently large $\lambda$,
\begin{equation} \label{eql.boundary_psi} | \nablai \psi | \lesssim e^{-2 \lambda F} ( | \nablai \phi | + | \phi | ) \text{.} \end{equation}

For $I^1_k$, we expand $Q$ using \eqref{eql.Q} to obtain
\begin{align*}
| I^1_k | \leq \int_{ \partial D_k } | \nablai^\alpha f \nablai_\alpha \psi | | \nablai_{ \nu_k } \psi | + \frac{1}{2} \int_{ \partial D_k } | \nablai_{ \nu_k } f | | \nablai^\alpha \psi \nablai_\alpha \psi | \text{.}
\end{align*}
Applying \eqref{eql.boundary_psi}, the above becomes
\begin{equation} \label{eql.boundary_1} | I^1_k | \leq \int_{ \partial D_k } | \nablai f | | \nu_k | | \nablai \psi |^2 \lesssim \int_{ \partial D_k } e^{-4 \lambda F} | \nu_k | ( | \nablai \phi |^2 + \phi^2 ) \text{.} \end{equation}
Similarly, for $I^2_k$, we expand $P^\flat_\beta$ using \eqref{eql.algebraic_coeff} and estimate
\begin{equation} \label{eql.boundary_2} | I^2_k | \lesssim \lambda^2 \int_{ \partial D_k } f ( F^\prime )^2 | \nu_k | | \nabla f | \cdot \psi^2 \lesssim \lambda^2 \int_{ \partial D_k } e^{-4 \lambda F} | \nu_k | \cdot \phi^2 \text{,} \end{equation}
where we used \eqref{eql.F_exp_est} to control $F^\prime$.
Finally, for $I^3_k$, we expand $P^\sharp_\beta$, and use the trivial bound $| w^\prime | \lesssim 1$, which can be obtained from \eqref{eql.carleman_f_ass_w}:
\begin{align*}
| I^3_k | &\lesssim \int_{ \partial D_k } | \nu_k | \cdot | \psi | | \nablai \psi | + \int_{ \partial D_k } | \nablai_{ \nu_k } w | \cdot \psi^2 \\
&\lesssim \int_{ \partial D_k } | \nu_k | \cdot | \nablai \psi |^2 + \int_{ \partial D_k } ( | \nu_k | + | \nablai_{ \nu_k } w | ) \cdot \psi^2 \text{.}
\end{align*}
Recalling \eqref{eql.boundary_psi}, we obtain the estimate
\begin{equation} \label{eql.boundary_3} | I^3_k | \lesssim \int_{ \partial D_k } e^{-4 \lambda F} ( | \nu_k | + | \nablai_{ \nu_k } w | ) \cdot ( | \nabla \phi |^2 + \phi^2 ) \text{.} \end{equation}

Recalling the vanishing condition \eqref{eql.carleman_f_ass_vanishing}, then \eqref{eql.boundary_1}-\eqref{eql.boundary_3} imply that
\[ \lim_{ k \nearrow \infty } ( | I^1_k | + | I^2_k | + | I^3_k | ) = 0 \text{,} \]
which completes the proof of \eqref{eql.boundary}.
For \eqref{eql.algebraic_integral}, we integrate \eqref{eql.algebraic_pi} over $D_k$:
\begin{align*}
\int_{ D_k } ( F^\prime )^{-1} | L \psi |^2 &\geq \mc{C} \lambda \int_{ D_k } f G \cdot | \nablai_{ \Ei{n} } \psi |^2 + \mc{C} \lambda \int_{ D_k } \Psi \cdot \sum_{i = 0}^{n - 1} | \nablai_{ \Ei{i} } \psi |^2 \\
&\qquad + \mc{C} \lambda^3 \int_{ D_k } \Lambda \cdot \psi^2 + 2 \lambda \int_{ D_k } \nablai^\beta P_\beta \text{.}
\end{align*}
Taking a limit of this as $k \nearrow \infty$, the last term on the right-hand side vanishes by \eqref{eql.boundary}.
Moreover, by applying the monotone convergence theorem on the remaining (nonnegative) terms, we obtain the desired inequality \eqref{eql.algebraic_integral}.
\end{proof}

\subsubsection{Completion of the Proof}

Recall now from \eqref{eql.psi_phi} that
\begin{equation} \label{eql.unconjugate_1} \psi = e^{-\lambda F} \phi \text{,} \qquad \mc{L} \psi = e^{-\lambda F} \Box \phi \text{,} \qquad \nablai_{ \Ei{i} } \psi = e^{-\lambda F} \nablai_{ \Ei{i} } \phi \text{,} \end{equation}
for any $0 \leq i < n$.
Moreover,
\[ \nablai_{ \Ei{n} } \psi = e^{-\lambda F} \cdot \nablai_{ \Ei{n} } \phi - \lambda e^{-\lambda F} F^\prime \nablai_{ \Ei{n} } f \cdot \phi \text{,} \]
so that, recalling \eqref{eql.carleman_f_ass_strong} and \eqref{eql.nor_est}, we can estimate
\[ e^{-2 \lambda F} \cdot | \nablai_{ \Ei{n} } \phi |^2 \lesssim | \nablai_{ \Ei{n} } \psi |^2 + \lambda^2 ( F^\prime | \nablai_{ \Ei{n} } f | )^2 \cdot \psi^2 \lesssim | \nablai_{ \Ei{n} } \psi |^2 + \lambda^2 f ( F^\prime )^2 \cdot \psi^2 \text{.} \]
Multiplying both sides by $( F^\prime )^{-1} G$ and applying \eqref{eql.F_est} yields
\begin{equation} \label{eql.unconjugate_2} e^{-2 \lambda F} ( F^\prime )^{-1} G \cdot | \nablai_{ \Ei{n} } \phi |^2 \lesssim f G \cdot | \nablai_{ \Ei{n} } \psi |^2 + \lambda^2 f F^\prime G \cdot \psi^2 \text{.} \end{equation}

Finally, combining \eqref{eql.algebraic_integral} with \eqref{eql.unconjugate_1} and \eqref{eql.unconjugate_2}, and letting
\[ \mc{W}_\lambda = e^{-2 \lambda F} ( F^\prime )^{-1} \text{,} \]
we obtain the generalized Carleman inequality
\begin{align}
\label{eql.algebraic_final} \int_{ \mc{D} } \mc{W}_\lambda \cdot | \Box \phi |^2 &\gtrsim \lambda \int_{ \mc{D} } \mc{W}_\lambda \left( G \cdot | \nablai_{ \Ei{n} } \phi |^2 + F^\prime \Psi \cdot \sum_{i = 0}^{n - 1} | \nablai_{ \Ei{i} } \phi |^2 \right) \\
\notag &\qquad + \lambda^3 \int_{ \mc{D} } \mc{W}_\lambda \cdot f (F^\prime)^2 G \cdot \psi^2 \text{.}
\end{align}
To recover the final inequalities \eqref{eq.carleman_f1} and \eqref{eq.carleman_f2} from \eqref{eql.algebraic_final}, we do the following:
\begin{itemize}
\item Replace $F = F_1$ in the case of \eqref{eq.carleman_f1}, or $F = F_2$ for \eqref{eq.carleman_f2}.

\item Similarly, substitute $G$ by either $G_1$ or $G_2$.

\item The weights $\mc{W}_\lambda$ corresponding to $F = F_1$ and $F = F_2$, respectively, are
\[ \mc{W}_{\lambda, 1} = f^{-2 \lambda + 1} e^{2 \lambda f^p} \text{,} \qquad \mc{W}_{\lambda, 2} = f^{q + 1} e^{2 \lambda f^{-q} } \text{.} \]
\end{itemize}
This completes the proofs of Propositions \ref{thm.carleman_f1} and \ref{thm.carleman_f2}.

\begin{remark}
Note that up to and including \eqref{eql.algebraic_final}, the preceding proof made no references to the explicit definitions of $F_1$ and $F_2$.
In particular, the entirety of our proof relied only on the following assumptions:
\begin{itemize}
\item Conditions characterizing $F$ and $G$: \eqref{eql.F_est}, \eqref{eql.F_exp_est}, \eqref{eql.G_est}, and \eqref{eql.Psi_est}.
\footnote{These essentially require that $F$ grows at least as fast as $\log f$, and also that $F$ should not be $\log f$ itself. The latter point accounts for the correction term added to $F_1$.}

\item Conditions \eqref{eql.carleman_f_ass_pseudoconvex}-\eqref{eql.carleman_f_ass_vanishing}, which guarantee that the underlying space-time and $f$ 
are sufficiently close to the special cases considered in Section \ref{sec:psc}.
\end{itemize}
If the above assumptions hold for our setting and for our choice of $F$, then the preceding proof implies that the more 
general Carleman estimate \eqref{eql.algebraic_final} holds.
Such a generalized estimate allows for different unique continuation results for operators $L_g$ as in \eqref{wave}, but with the lower-order terms allowed to have different asymptotic behavior from those considered in Theorems \ref{thm1}-\ref{O1.pm}.
\end{remark}

\section{Proof of Theorems \ref{thm1}-\ref{O1.pm}.} \label{sec:massloss}

\subsection{Preliminaries and Notation.}

We now consider the class of space-times $(\mc{D}, g)$ and operators $L_g=\Box_g+a^\alpha\partial_\alpha+V$ 
addressed in Theorems \ref{thm1}-\ref{O1.pm}.
The results will be proven \emph{together}, by reducing them to Propositions \ref{thm.carleman_f1}, \ref{thm.carleman_f2}.
There are three key steps:

We first consider certain {\it special} conformal rescalings
of the underlying metrics~$g$,
\begin{equation*}
  g\mapsto\bar{g} = \Omega^2 g \text{,}
\end{equation*}
which transform the boundaries at infinity $\mc{I}^+_\e\cup\mc{I}^-_\e$ and $\mc{I}^+_{r_0}\cup\mc{I}^-_{r_0}$ into
{\it complete} double null cones emanating from a  point; we denote these cones by $\ol{\mc{I}}$ for the purpose of this discussion. 
The domains $\mc{D}^\e_\omega$, $\mc{D}_\omega$
are then mapped to  the {\it exterior} domains of these cones; see Figure \ref{fig:minkowski:inversion}.
This is a generalization of the {\it warped inversion} discussed in Section \ref{sec:minkowski:pseudo}.
Now, $L_g$ transforms to a {\it new} wave operator $\bar{L}_{\ol{g}}$ (whose principal symbol is $\Box_{\ol{g}}$)
defined over the manifolds $(\mc{D}, \bar{g})$ such that the solutions $\phi$ 
to $L_g \phi=0$ yield new solutions $\bar{\phi}$ to $\bar{L}_{\bar{g}} \bar{\phi}=0$;
cf.~Section \ref{sec:operators:transform}.

Then, we prove that the inverted metrics $\bar{g}$ and the operators $\bar{L}_{\bar{g}}$  fulfill the requirements
of Propositions \ref{thm.carleman_f1}, \ref{thm.carleman_f2}; cf.~Section \ref{sec:carleman:verify:assumptions}. In particular, we can derive Carleman 
estimates for $\Box_{\bar{g}}$, for functions defined over 
the domains $\mc{D}^\e_{\omega'}$, $\mc{D}_{\omega'}$ (depending on the setting). 

In the last step, we show that these Carleman estimates directly imply that $\bar{\phi}=0$ on 
the domains $\mc{D}^\e_{\omega'}$,  $\mc{D}_{\omega'}$; cf.~Section \ref{sec:uc:arg}. This implies that $\phi$, which solves the original equation $L_g\phi=0$,
vanishes on the same domain. 

In order to give a unified proof of Theorems \ref{thm1}-\ref{O1.pm}, we introduce some uniform language and notation. 
We will refer to the  settings of Theorems \ref{thm1}, \ref{O1} as the ``zero mass'' $\mathbf{(M_0)}$ settings and to Theorems \ref{thm1.pm}, \ref{O1.pm} as the ``positive mass'' $\mathbf{(M_+)}$ settings.
  We collectively refer to the domains $\mc{D}^\e_\omega$, $\mc{D}_\omega$ 
as $\mc{D}_\omega$; our metrics $g$ below will be defined on those domains. 
Recall that $\mc{D}$ is covered by coordinates $ \{u,v,y^1,\dots, y^{n-1}\}$
with $-u,v\in (0,+\infty)$ and $y^1,\dots y^{n-1}$ which cover the sphere $\mathbb{S}^{n-1}$.
We will be denoting $\partial_A:=\partial_{y^A}$. 

\begin{definition}
Let $\uu:=-u+\e$, $\vv:=v+\e$ in the zero mass settings, and $\uu:=-u$, $\vv:=v$ in the positive mass settings.
We also set $r^\ast:=v-u$ for brevity.
\end{definition}

Note that in all settings of Section~\ref{sec:results}, the functions $f_\e, f$ concide with:
\beq \label{eq:f:uv}
f=\frac{1}{\uu\vv}\,.
\eeq

Moreover the metric in all theorems considered is of the general form: 
\begin{align}
\label{gen.form} g &:= \mu \,\ud u^2-4 K\,\ud u\ud v+\nu\, \ud v^2+\sum_{A,B=1}^{n-1}r^2\,\gamma_{AB}\,\ud y^A\ud y^B \\
\notag &\qquad + \sum_{A=1}^{n-1}\,c_{Au}\,\ud y^A\ud u+\sum_{A=1}^{n-1}\,c_{Av}\,\ud y^A\ud v \text{,}
\end{align}
where $r$ is a smooth function that:
\begin{itemize}
\item[$\mathbf{(M_0)}$] is given by $r=r^\ast=v-u$ in the zero mass settings,
\item[$\mathbf{(M_+)}$] obeys the differential inequality \eqref{uvr} in the positive mass setting:
  \begin{equation*}
   \Bigl(1+\mathcal{O}\bigl({\rs}^{-2}\bigr)\Bigr)\ud v-\Bigl(1+\mathcal{O}\bigl({\rs}^{-2}\bigr)\Bigr)\ud u=\Bigl(1+\frac{2m}{r}\Bigr)\ud r+\sum_{A=1}^{n-1}\mathcal{O}\bigl({\rs}^{-1}\bigr)\ud y^A
  \end{equation*}
Moreover, $r$ is required to be both large and bounded above on a fixed level set $r^\ast=M$; see \eqref{uvr0}.
\end{itemize}

The metric coefficients belong to the classes 
\begin{gather}\label{components}
  K = 1 - \frac{2m}{r} \text{,} \qquad \gamma_{AB} = \g_{AB} + \mc{O}^\delta_1 \left( \frac{1}{r^*} \right) \text{,} \\
  c_{Au}, c_{Av} = O_1^\delta \left( \frac{1}{r^*} \right) \text{,} \quad \mu = \mc{O}^\delta_1 \left( \frac{1}{{r^*}^3} \right) \text{,} \quad \nu = \mc{O}^\delta_1 \left( \frac{1}{{r^*}^3} \right) \text{,} \notag
\end{gather}
where $\g_{AB}$ is the {\it round} metric on $\mathcal{S}_{u,v}$ with respect to the coordinates $y^A$,
and $\delta>0$ is a positive constant, and $m$ a smooth function such that
\begin{itemize}
\item[$\mathbf{(M_0)}$] $0<\delta\ll \e$,\footnote{See the statement of Theorems \ref{thm1}, \ref{O1}.}
and $m$ satisfies $m=\mc{O}_1^\delta(\frac{1}{r^*})$. 
\item[$\mathbf{(M_+)}$] $m$ is a smooth {\it positive} function on the spacetime satisfying a uniform lower bound $m \ge m_{\rm min} > 0$ in the domains $\mc{D}$ (see \eqref{ass.m.0}), and $\delta$ is \emph{any} positive constant.
Furthermore, from \eqref{ass.m.1}, $m$ satisfies, for some $\eta>0$,
  \begin{equation*}
    \lvert\partial_u m\rvert, \lvert\partial_v m\rvert\leq C (v-u)^{-2}\,,\qquad \lvert \partial_A m\rvert\leq C (v-u)f^\eta\,.
  \end{equation*}
\end{itemize}
On the level of second derivatives, we always require \eqref{ass.box} for some $\eta>0$,
\begin{equation}\label{eq:ddmr}
  \Box_g \left( \frac{m}{r} \right) = \mathcal{O}^\delta (f^{1+\eta}) \text{.}
\end{equation}

With this notation in place, we can define the conformal inversion of the metrics~$g$. 
The {\it inverted metric} $\ol{g}$ is now defined by:
\beq
\label{conf.inv}
\ol{g}=K^{-1}f^2g.
\eeq

\subsection{Transformation of the Equations to the Inverted Picture} \label{sec:operators:transform}

Let us next see how solutions to a wave equation over $(\mc{D},g)$
correspond to solutions of a {\it new} wave equation over $(\mc{D}, \bar{g})$. 
Consider on $\mathcal{D}$ the equation
\begin{equation}
  \label{eq:a:equation:physical}
  L_g \phi := \Box_g \phi + a^\alpha \partial_\alpha \phi + V \phi = 0 \text{.}
\end{equation}

Recall the conformal Laplacian, 
\[ P_g := \Box_g - \frac{n - 1}{4 n} R_g \text{,} \]
where $R_g$ is the scalar curvature for $g$.
This operator enjoys the following conformal transformation law for all functions $\Omega>0$ and $\phi$:
\begin{equation}
  \label{eq:a:conformal:law}
  P_{\Omega^2 g} ( \Omega^{-\frac{n - 1}{2}} \phi ) = \Omega^{-\frac{n + 3}{2} } P_g \phi \text{.}
\end{equation}
Therefore, letting $\bar{g}=\Omega^2g$, $\bar{\phi} = \Omega^{-\frac{n - 1}{2} } \phi$, we derive:
\begin{equation}
  \Box_g \phi = \Omega^{\frac{n + 3}{2} } \Box_{\bar{g}} \bar{\phi} + \frac{n - 1}{4 n} ( \Omega^{\frac{n + 3}{2} } R_{\bar{g}} - \Omega^{\frac{n - 1}{2} } R_g ) \bar{\phi} \text{.}
\end{equation}

Thus, a direct computation shows that
$ L_g \phi = \Omega^{\frac{n + 3}{2} } \bar{L}_{ \bar{g} } \bar{\phi}$,
where $\bar{L}_{ \bar{g} }$ is the corresponding operator
\begin{align}
\label{eq:Lbar} \bar{L}_{ \bar{g} } &= \Box_{ \bar{g} } + \bar{a}^\alpha \partial_\alpha + \bar{V} \text{,} \qquad \bar{a}^\alpha = \Omega^{-2} a^\alpha \text{,} \\
\notag \bar{V} &= \Omega^{-2} V + \frac{n - 1}{4} \Omega^{-4} a^\alpha \partial_\alpha ( \Omega^{2 } ) + \frac{n - 1}{4 n} ( \Omega^{-2 } R_g - R_{ \bar{g} } ) \text{.}
\end{align}
In particular, it holds that \emph{$L_g \phi \equiv 0$ if and only of $\bar{L}_{ \bar{g} } \bar{\phi} \equiv 0$}.

Finally, recall from the classical Yamabe equation that the difference of scalar curvatures between conformally related metrics is in general given by
\begin{align}
\label{yamabe} \Omega^{-2 } R_g - R_{ \bar{g} } &= \frac{4n}{n - 1} \Omega^{-\frac{n + 3}{2}} \Box_g ( \Omega^{\frac{n - 1}{2}} ) \\
\notag &= 2 n \Omega^{-3 } \Box_g \Omega + n (n - 3) \Omega^{-4} \cdot g ( \nabla \Omega, \nabla \Omega ) \text{.}
\end{align}

\subsection{Verification of the Key Assumptions for the Carleman Estimates in the Inverted Picture} \label{sec:carleman:verify:assumptions}

We shall now prepare the application of the Carleman estimates of Section \ref{sec:carleman} to the wave operators $\Box_{\ol{g}}$ corresponding to 
$\ol{g}$, and functions supported in $\mc{D}_{\omega'}$ that {\it vanish} in a suitable weighted sense 
on the (semi-compactified) boundary $\ol{\mc{I}}$. We recall that this boundary is (in an intrinsic sense) a {\it complete} double null cone,
emanating from a point that corresponds to spatial infinity. 
\footnote{We note that {\it if} we had considered the Penrose compactification instead of \eqref{conf.inv} then the segments $\mc{I}^+_\e,\mc{I}^-_\e$ would have been converted to an {\it incomplete} double null cone; in that setting the boundary spheres $\{\vv=+\infty, \uu=0\}, \{\uu=\infty, \vv=0\}$ would have caused significant difficulties.}
 

In this section, we show that with our choice \eqref{eq:f:uv} for the function $f$, we can find a function $w$ such that the requirements of Propositions \ref{thm.carleman_f1}, \ref{thm.carleman_f2} are fulfilled for the metrics $\ol{g}$.
In particular, we show that the level sets of $f$ are pseudo-convex for $\ol{g}$; in fact,
\[ \bar{\pi} = - \bar{\nabla}^2 f + h^{(w)} \bar{g} \text{,} \]
restricted to the level sets of $f$, is a positive tensor.
We demonstrate this property using an explicit asymptotically orthonormal frame (with respect to $\bar{g}$) adapted to $f$.
In Section \ref{sec:uc:arg}, we shall then apply Propositions~\ref{thm.carleman_f1}, \ref{thm.carleman_f2} to functions that satisfy the appropriate vanishing conditions.

The model spacetimes discussed in Section~\ref{sec:psc} provide a guide for the choice of these functions and the construction of the frames.
We recall at this point that the term $\frac{m}{r}$ in the metric component $-4K\ud u\ud v$ of $g_S$ and the {\it positivity} of $m$ yield {\it extra} positivity for the tensor $\bar{\pi}$ in tangential directions.
The underlying cause of the improved pseudo-convexity is the discrepancy between the functions $r^*:=v-u$ and $r$.
The same type of positivity is present for all our ``positive mass space-times''.
More specifically, it is captured by the components \eqref{other.orth}, \eqref{eq:snablaf:TT} of the Hessian of $f$ below.
This leads us to derive certain lower bounds for the difference $r^*-r$ for all space-times under consideration, 
which as we shall see highlights the importance of the mass term when it is positive rather than zero. 

\subsubsection{Bounds on $\rs-r$.}

Recall that in the zero mass setting we have simply $\rs=r$.
In the positive mass setting, we remark that (\ref{uvr}) together with the 
boundedness below of $r$ on the set $\{r^*=M\}$ imply that for $r^*$ large enough,
\begin{equation}
  \label{easy.bd}
  r \simeq \rs\,.
\end{equation}
Here we have only used the trivial bound $1\leq 1+2m/r \leq 2$, whereas taking the lower order terms into account,
the relation \eqref{uvr} along with the bounds on the derivatives of $m$ imply: 

\begin{lemma}
In the positive mass setting, there exists $C>0$ so that for $r^*$ large enough:
\beq
\label{key.bound}
r^*-r\geq 2\, m_{\rm min}\,\log r -C. 
\eeq
\end{lemma}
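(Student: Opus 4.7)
The strategy is to sharpen the trivial comparison $r \simeq \rs$ of \eqref{easy.bd} by integrating the relation \eqref{uvr} along a carefully chosen curve. Given a point $p$ with $\rs(p)$ large, I would fix the reference point $p_0$ having the same $u$ and $y^A$ coordinates as $p$ but with $v$-coordinate chosen so that $\rs(p_0) = v(p_0) - u = M$. By \eqref{uvr0}, both $r(p_0)$ and $\rs(p_0) = M$ are bounded above and below by positive constants, which will make the boundary terms harmless. I then integrate along the curve $\gamma$ joining $p_0$ to $p$ on which $u$ and $y^A$ are held constant.

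Along $\gamma$ we have $\ud u = \ud y^I = 0$, so \eqref{uvr} reduces to the scalar relation
\[ \bigl[1 + \mc{O}(\rs^{-2})\bigr]\,\ud v = \bigl(1 + \tfrac{2m}{r}\bigr)\,\ud r\,.\]
Using \eqref{easy.bd} to replace $\mc{O}(\rs^{-2})$ by $\mc{O}(r^{-2})$, rearranging, and noting that $\ud \rs = \ud v$ along $\gamma$ (since $u$ is constant), I obtain the pointwise identity
\[ \frac{\ud(\rs - r)}{\ud r} = \frac{2m}{r} + \mc{O}(r^{-2})\qquad \text{along } \gamma\,.\]

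Integrating from $r(p_0)$ to $r(p)$ and using the uniform lower bound $m \geq m_{\min}$ together with the fact that $\int_{r(p_0)}^{\infty} r'^{-2}\,\ud r'$ is uniformly bounded, I get
\[ (\rs(p) - r(p)) - (\rs(p_0) - r(p_0)) \;\geq\; 2 m_{\min}\,\log\frac{r(p)}{r(p_0)} - C_1\,.\]
Since $r(p_0)$ is bounded above and below by positive constants depending only on $M$, the term $2 m_{\min}\log r(p_0)$ and the boundary contribution $\rs(p_0) - r(p_0) = M - r(p_0)$ are both uniformly bounded. Absorbing these constants into a single $C > 0$ yields \eqref{key.bound}.

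The only subtle point is justifying that the path $\gamma$ can be parametrized by $r$ and remains in the asymptotic regime where \eqref{easy.bd} and the $\mc{O}$-bounds in \eqref{uvr} apply. This should not be a real obstacle once $M$ is chosen sufficiently large: the computation $\ud v/\ud r = 1 + 2m/r + \mc{O}(r^{-2}) > 0$ yields strict monotonicity of $r$ in $v$ along $\gamma$, while $\rs = v - u$ ranges monotonically between $M$ and $\rs(p)$ (again because $u$ is constant on $\gamma$), so the entire integration stays in $\{\rs \geq M\}$ where the required estimates are valid.
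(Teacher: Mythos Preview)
Your argument is correct and follows essentially the same route as the paper: fix $u$ and $y^A$, integrate the relation \eqref{uvr} in $v$ from the reference level $\rs = M$ out to the given point, and absorb all boundary contributions into the constant $C$. The paper organizes the mass term slightly differently---writing $\tfrac{2m}{r}\,\ud r = \ud(2m\log r) - 2(\ud m)\log r$ and invoking the bound \eqref{ass.m.1} on $\partial_v m$---whereas you bound $\int \tfrac{2m}{r}\,\ud r \geq 2m_{\min}\log r + \text{const}$ directly from $m \geq m_{\min}$; your version is marginally cleaner since it does not appeal to the derivative bound on $m$.

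One small omission: your reference point $p_0$ has $v(p_0) = M + u(p)$, which lies in the domain $\{v>0\}$ only when $u(p) > -M$. For points with $u(p) \leq -M$ you must instead hold $v$ and $y^A$ fixed and integrate in $u$; the argument is entirely symmetric. The paper notes this split (``separately for $v$ sufficiently large and $-u$ sufficiently large'') and, like you, writes out only one case.
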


\begin{proof}
Recall that by assumption $|r^*-r|\le C_0$ on $\{r^*=M\}$. 
The inequality (\ref{key.bound}) can be proven separately for $v$ sufficiently large and $-u$ sufficiently large; we here show the first case.
We derive from \eqref{uvr}:
\begin{align}
\label{diff.form} \ud r^* &= \ud(r+2m\log r) - \mc{O} \left( \frac{1}{{\rs}^2} \right) \ud v - 2 \partial_v m \log r \,\ud v - 2 \partial_u m \log r\,\ud u \\
\notag &\qquad + \mc{O} \left( \frac{1}{{r^*}^2} \right) \ud u+\sum_{A=1}^{n-1} \left[ \mc{O} \left( \frac{1}{r^*} \right) - 2 (\partial_A m) \log r \right] \ud y^A \text{.}
\end{align}
Now, fixing any values $y^A_*, u_*$, 
 consider the curve $\{y^A=y^A_*, u=u_*\}$.
We then integrate \eqref{diff.form} on this curve (the integrals of the 1-forms containing $\ud u,\ud y^A$ then vanish).
In view of \eqref{easy.bd}, the assumed boundedness \eqref{uvr0} of $r$ on $\{r^*=M\}$, and the bound \eqref{ass.m.1} on $|\partial_v m|$, the bound (\ref{key.bound}) follows.
\end{proof}

Note that with the same proof we also have the corresponding upper bound,
\begin{equation}
  \label{eq:easy.upper}
  \rs-r\leq C_m \log r\,,\qquad \text{for $r^\ast$ large enough,}
\end{equation}
where $C$ is constant that depends on $m$.
Let us also note here for future reference that (in all settings)
\begin{equation}
  \label{r.inv}
  v \leq r^\ast \simeq r \text{,} \qquad -u \lesssim r \text{,} \qquad r^{-1} \lesssim \sqrt{f} \text{,}
\end{equation}
as long as $r^\ast$ is large enough.

\subsubsection{Main properties of $f$ as Carleman weight function on $(\mc{D},\bar{g})$}

The main task in this section is to show that $f$, defined by \eqref{eq:f:uv} as  a function on the \emph{conformally inverted spacetime} $(\mc{D},\ol{g})$, where $\ol{g} := K^{-1}f^2g$, fulfills the requirements of Propositions \ref{thm.carleman_f1}, \ref{thm.carleman_f2}, with suitable choices of $w$, $h^{(w)}$, $\Psi$, and for a suitable frame.

The main proposition of this section is then: 
 
\begin{proposition}
\label{calclns}
Consider a metric $g$ of the form (\ref{gen.form}). 
Let the inverted metric $\ol{g}$ be defined by \eqref{conf.inv} and the function $f$ by \eqref{eq:f:uv}.
Let $w$ be the function
\begin{enumerate}
\item[$\mathbf{(M_0)}$] $w:=-\frac{1}{2}\frac{\e}{r^*}$ in the zero-mass setting,
\item[$\mathbf{(M_+)}$] $w:=-\frac{1}{2}\frac{m_{\min}}{r^*}\log r^*$ in the positive mass setting,
\end{enumerate}
and moreover $h:=w+\frac{1}{2}\Box_{\ol{g}}f-\frac{n-1}{4}$, as in \eqref{eq.carleman_f_psf}.

Then, there exist orthonormal frames $(\tilde{E}_0, \dots, \tilde{E}_n)$ adapted to $f$ (as defined in Section \ref{sec:carleman}) such that the conditions \eqref{eq.carleman_f1_ass_gap}-\eqref{eq.carleman_f1_ass_w} of Proposition \ref{thm.carleman_f1} and \eqref{eq.carleman_f2_ass_gap}-\eqref{eq.carleman_f2_ass_w} of Propositions \ref{thm.carleman_f2} are fulfilled (for $p=\eta/2$ and $q=2/3$, respectively), with
\begin{enumerate}
\item[$\mathbf{(M_0)}$] $\Psi :=\frac{\e}{r^*}$ in the zero mass setting,
\item[$\mathbf{(M_+)}$] $\Psi :=\frac{m_{\min}}{r^*}\log r^*$ in the positive-mass setting,
\end{enumerate}
and with $w$ as above.
Furthermore, the frame elements $\tilde{E}_\alpha$ satisfy
\begin{equation}
\label{calclns.est} f^{-\frac{1}{2}} \tilde{u} \cdot \partial_u,\, f^{-\frac{1}{2}} \tilde{v} \cdot \partial_v,\, r^{-1} f^{-1} X = \sum_{\alpha = 0}^n \Bigl[ 1 + \mc{O}^\delta \Bigl( \frac{1}{r^\ast} \Bigr) \Bigr] \tilde{E}_\alpha \text{.}
\end{equation}
for any vector field $X$ tangent to the $\mc{S}_{u, v}$'s which is unit with respect to $\g$.
 \end{proposition}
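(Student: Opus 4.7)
The plan is to reduce the verification to the explicit model computations of Section~\ref{sec:psc}, treating the general metric $g$ of the form \eqref{gen.form} as a mild perturbation of either the Minkowski model $\mathbf{(M_0)}$ or the Schwarzschild model $\mathbf{(M_+)}$. First I would introduce the ``inverted'' coordinates $(U,V) = (-\uu^{-1}, \vv^{-1})$, in which $f = -UV$ and the inverted metric reads
\begin{equation*}
\gb = -4\,\ud U\,\ud V + \mc{R}^2 \gamma_{AB}\,\ud y^A\ud y^B + \text{(perturbative terms)}\,,\qquad \mc{R}^2 := K^{-1}f^2 r^2\,,
\end{equation*}
matching the common form \eqref{inverted.common} up to corrections governed by \eqref{components} and \eqref{ass.m.1}. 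The adapted frame $(\tilde{E}_0,\dots,\tilde{E}_n) = (\Tb,\tilde{E}_1,\dots,\tilde{E}_{n-1},\Nb)$ is then defined exactly as in Section~\ref{sec:psc}, with $\Nb,\Tb$ spanning the $(U,V)$-plane (orthonormalized with respect to $\gb$) and $\tilde{E}_a$ spanning the tangent space of the spheres $\mc{S}_{u,v}$, orthonormalized with respect to the induced metric $f^2 r^2 \gamma$. The off-diagonal and ``trace'' components $c_{Au}, c_{Av}, \mu, \nu$ in \eqref{gen.form} introduce $\mc{O}(1/\rs)$ corrections in the frame coefficients, and a direct calculation in $(U,V,y^A)$-coordinates yields the comparison \eqref{calclns.est}.

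The main step is the pseudo-convexity condition. I would compute $\Hb{}{f}$ using the conformal transformation law \eqref{eq:hessian:transformation} applied to $\gb = (K^{-1}f^2)g$, together with the explicit derivatives of $f = (\uu\vv)^{-1}$. For tangential directions, the dominant contributions reproduce the model computations of Section~\ref{sec:psc}: the $\gb$-proportional piece generated by the product of normal derivatives of $f$ with the metric is cancelled precisely by $h\gb$ once the announced $w$ is chosen, leaving the tangential components of $\bar{\pi}$ positive and $\simeq \Psi$. In the zero-mass case this positivity is $\tfrac{1}{2}\tfrac{\e}{r}$ as in \eqref{eq:v:M:f:pseudoconvexity}, giving $\Psi \simeq \e/\rs$. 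In the positive-mass case, the extra positive term $\tfrac{1}{4}\tfrac{2m}{r}\log|r-2m|$ visible in the Schwarzschild computation of Section~\ref{sec:psc} survives at the uniform rate $\tfrac{m_{\min}}{\rs}\log\rs$ by virtue of the lower bound \eqref{ass.m.0} and the crucial comparison $\rs - r \gtrsim \log r$ from \eqref{key.bound}. The metric perturbations in \eqref{components} and the derivative bounds on $m$ in \eqref{ass.m.1} enter only as subleading corrections, which are smaller than $\Psi$ by at least a factor of $\rs^{-1}$ or $f^\eta$ (using $r^{-1} \lesssim \sqrt{f}$ from \eqref{r.inv}) and are thus absorbed.

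Finally I would verify the remaining bounds \eqref{eq.carleman_f1_ass_f_strong}--\eqref{eq.carleman_f1_ass_w} (and their $q$-analogs \eqref{eq.carleman_f2_ass_f_strong}--\eqref{eq.carleman_f2_ass_w}). The off-diagonal and purely normal Hessian components $\Hb{\Nb\tilde{E}_a}{f}$, $\Hb{\Nb\Nb}{f} - \tfrac{1}{2}$, and the wave quantity $\Boxb f - \tfrac{n+1}{2}$ all reduce, via the same conformal-Hessian computation, to combinations of the model expressions plus error terms governed by \eqref{components}, \eqref{ass.m.1}, \eqref{uvr}; checking that these errors obey the required sub-$\Psi$ (resp.\ sub-$f^p$ or sub-$1$) bounds is a bookkeeping exercise using $r^{-1} \lesssim \sqrt{f}$ and \eqref{easy.bd}--\eqref{eq:easy.upper}. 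The most delicate step, and the main technical obstacle, is the bound on $|\Boxb w|$. Writing
\begin{equation*}
\Boxb w = (K^{-1}f^2)^{-1}\bigl[\Box_g w + \text{conformal corrections involving }\nabla w\bigr]\,,
\end{equation*}
this reduces in the zero-mass case to computing $\Box_g(\rs^{-1})$, which is elementary from \eqref{components}, and in the positive-mass case to computing $\Box_g((\log\rs)/\rs)$, where assumption \eqref{eq:ddmr} controlling $\Box_g(m/r)$ must be invoked to absorb the contributions coming from the second derivatives of $m$ (via \eqref{uvr} relating $\rs$ and $r$). Once these bounds are verified, the hypotheses of both Proposition~\ref{thm.carleman_f1} (with $p = \eta/2$) and Proposition~\ref{thm.carleman_f2} (with $q = 2/3$) are satisfied, and the comparison \eqref{calclns.est} follows directly from the explicit form of the frame elements in the inverted coordinates, completing the proof.
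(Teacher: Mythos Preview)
Your proposal is essentially correct and follows the same strategy as the paper: compute $\bar{\nabla}^2 f$ via the conformal transformation law, build an orthonormal frame adapted to $f$ by rescaling and orthonormalizing the explicit model frame, and extract pseudo-convexity from the key inequality $r^* - r \gtrsim m_{\min}\log r$ in the positive-mass case. The paper organizes the computation slightly differently---it stays in the physical $(u,v)$ coordinates throughout, factors the conformal change into two steps ($g \to g^\sharp := K^{-1}g \to \bar{g} := f^2 g^\sharp$), and explicitly computes the Christoffel symbols of $g^\sharp$ before applying the Hessian transformation law---but the substance is the same.

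One minor correction: you claim that the bound on $|\Boxb w|$ in the positive-mass case requires the assumption \eqref{eq:ddmr} on $\Box_g(m/r)$. It does not. The function $w = -\tfrac{1}{2}\tfrac{m_{\min}}{r^*}\log r^*$ depends only on $r^* = v-u$ (with $m_{\min}$ a constant), so $\Box_{g^\sharp} w$ is computed directly from the Christoffel symbols of $g^\sharp$ and involves no second derivatives of $m$ or $r$. The assumption \eqref{eq:ddmr} is used elsewhere in the paper (to control the scalar-curvature term in the transformed potential $\bar{V}$, see \eqref{Vbar_31}), but not in the proof of this proposition.
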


\begin{proof}
It is convenient to start our calculations  in the physical metric $g$; we then obtain the bounds for the relevant quantities in the inverted metric $\ol{g}$ via standard formulae for conformal transformations.
 In fact, for convenience we consider 
first a ``mild" conformal rescaling of the physical space-time metric $g$: 
\beq
\label{mild}
\sg := K^{-1}g.
\eeq
 
We begin with deriving bounds for the inverse of the metric $\sg$ and its Christoffel symbols.
{\it Convention:} For the components of $\sg,\sGamma$ below, we use the entries $u, v, A=1,\ldots,n-1$, to refer to the coordinates $u,v, y^1,\dots, y^{n-1}$. 

Given the assumptions on the metric \eqref{gen.form}, the rescaling \eqref{mild}, and the bounds \eqref{eq:dmr}, we derive for the inverse of the metric $\sg$:
\begin{gather}
\sg^{uu} = \mc{O}^\delta \Bigl( \frac{1}{{r^*}^3} \Bigr) \text{,} \quad \sg^{uv} = - \frac{1}{2} + \mc{O}^\delta \Bigl( \frac{1}{r^*} \Bigr) \text{,} \quad \sg^{vv} = \mc{O}^\delta \Bigl( \frac{1}{{r^*}^3} \Bigr) \text{,} \\
\sg^{A u}, \sg^{A v} = \mc{O}^\delta \Bigl( \frac{1}{{r^*}^3} \Bigr) \text{,} \quad \sg^{AB} = r^{-2} \Bigl[ \g^{AB} + \mc{O}^\delta \Bigl( \frac{1}{r^*} \Bigr) \Bigr] \text{.}
\end{gather}

 We also derive the following bounds for certain Christoffel symbols of $\sg$ relevant for the conclusions below:
\begin{subequations}\label{eq:sGamma}
 \begin{gather}
   \sGamma_{uu}^u, \sGamma_{vv}^v, \sGamma_{uv}^u, \sGamma_{uv}^v,\sGamma^v_{uu}, \sGamma^u_{vv} = \mc{O}^\delta \Bigl( \frac{1}{{r^*}^4} \Bigr) \text{,} \\
   \sGamma^u_{u I}, \sGamma^v_{vI}, \sGamma^v_{uI}, \sGamma^u_{vI}=\mc{O}^\delta \Bigl( \frac{1}{{r^*}^2} \Bigr) \text{,} \\
   \sGamma^u_{AB}=\frac{1}{2}r\g_{AB}+\mathcal{O}^\delta(1)\,,\quad
    \sGamma^v_{AB}=-\frac{1}{2}r\g_{AB}+\mathcal{O}^\delta(1)\,.
 \end{gather}
\end{subequations}
In deriving the above formulas we used that
\[ \frac{\partial r}{\partial u} = -1 + \frac{2m}{r} + \mathcal{O}^\delta \Bigl( \frac{1}{ {r^\ast}^2 } \Bigr) \text{,} \qquad \frac{\partial r}{\partial v} = 1 - \frac{2m}{r} + \mathcal{O}^\delta \Bigl( \frac{1}{ {r^\ast}^2 } \Bigr) \text{,} \]
which follows from \eqref{uvr}, as well as the estimates
\begin{gather}\label{eq:dmr}
\partial_u \left(\frac{m}{r}\right) \text{,} \, \partial_v \left( \frac{m}{r} \right) = \mathcal{O}^\delta \left( \frac{1}{{\rs}^2} \right) \text{,} \qquad \partial_A \left( \frac{m}{r} \right) = \mathcal{O}^\delta (f^\eta) \text{,} \\
\notag g^{\alpha\beta} \cdot \partial_\alpha \left( \frac{m}{r} \right) \partial_\beta \left( \frac{m}{r} \right) = \mathcal{O}^\delta (f^{1+\eta}) \text{,}
\end{gather}
which follow from our assumptions on $m$ and $r$.

We now consider the frame $(T, E_1, \dots, E_{n-1},N)$, where $E_1,\ldots,E_{n-1}$ is an orthonormal frame for $r^2\g$ and $T$, $N$ are defined by:
\begin{subequations}\label{eq:TNuv}
\begin{gather}
  T := \frac{1}{2} \sqrt{f} \left( \uu \partial_u + \vv \partial_v \right) = \frac{1}{2} \left( \sqrt{\frac{\uu}{\vv}}\partial_u + \sqrt{\frac{\vv}{\uu}}\partial_v \right) \text{,} \\
  N := \frac{1}{2} \sqrt{f} \left( \uu \partial_u - \vv \partial_v \right) = \frac{1}{2}\left( \sqrt{\frac{\uu}{\vv}}\partial_u - \sqrt{\frac{\vv}{\uu}}\partial_v \right) \text{.}
\end{gather}
\end{subequations}
This frame is asymptotically orthonormal; in fact, from \eqref{r.inv}, we have
\begin{subequations}
\begin{gather}
  \sg(T,T)=-1+\mc{O}^\delta \! \Bigl( \frac{f}{r^*} \Bigr) \text{, }
  \sg(T,N)=\mc{O}^\delta \! \Bigl( \frac{f}{r^*} \Bigr) \text{, }
  \sg(N,N)=1+\mc{O}^\delta \! \Bigl( \frac{f}{r^*} \Bigr) \text{, } \\  
  \sg({E}_i,{E}_j) = \delta_{ij}+\mc{O}^\delta \Bigl( \frac{1}{r^*} \Bigr) \text{,} \qquad i,j=1,\ldots,n-1 \text{,} \\
  \sg({T},{E}_i) = \mc{O}^\delta \left( \frac{\sqrt{f}}{r^*} \right) \text{,} \quad \sg({N},{E}_i)=\mc{O}^\delta \left( \frac{\sqrt{f}}{r^*} \right) \text{,} \qquad i=1,\ldots,n-1 \text{.}
\end{gather}
  \label{frame.orthonormal}
\end{subequations}
Furthermore, note that $T$ and $E_1, \dots E_{n-1}$ are tangent to the level sets of $f$.

Next, we calculate the components of the $\sg$-Hessian of $f$ relative to the above frame.
To keep notation simple, we write by convention $\snabla_{ij}f=(\snabla^2 f)(E_i,E_j)$.
Note we may assume without loss of generality that $E_i=\frac{1}{r}\partial_{y^i}$.
\footnote{At any given point we may choose the coordinates $y^A:A=1,\ldots,n-1$ such that the coordinate vectorfields are orthogonal at that point.}

First,
\begin{equation}\label{eq:snablaf:TT}
  \snabla_{TT}f=\frac{1}{4}\Bigl[\frac{\uu}{\vv}\snabla_{uu}f+2\snabla_{uv}f+\frac{\vv}{\uu}\snabla_{vv}f\Bigr]=\frac{1}{2}f^2+\mc{O}^\delta\Bigl(\frac{f^3}{r^\ast}\Bigr)
\end{equation}
Next,
 \begin{equation}\label{eq:snablaf:ij}
   \snabla_{ij}f=-\frac{1}{2}\frac{f^2}{r}(\vv+\uu)\:\delta_{ij}+\mc{O}^\delta \Bigl(\frac{f^2}{r^\ast}\Bigr)\,.
 \end{equation}
In particular, in view of $\vv+\uu=v-u+2\e$ $\mathbf{(M_0)}$ and \eqref{key.bound} we have
\begin{subequations}
\begin{align}
  \mathbf{(M_0)} & \quad\frac{\vv+\uu}{r}=\frac{v-u+2\e}{r}=1+\frac{2\e}{r}\\
  \mathbf{(M_+)} & \quad\frac{\vv+\uu}{r}\geq1+\frac{2 m_{\rm min}}{r}\log r-\frac{C}{r}
\end{align}
\end{subequations}
and thus for all $i\in \{1,\dots, n-1\}$:
\begin{subequations}
\label{other.orth}
\begin{align}
  \mathbf{(M_0)} & \quad-\snabla_{ii} f\geq\frac{1}{2}f^2+\frac{\e}{r}f^2+\mc{O}^\delta\Bigl(\frac{f^2}{r^\ast}\Bigr)\\
  \mathbf{(M_+)} & \quad-\snabla_{ii} f\geq\frac{1}{2}f^2+\frac{m_{\text{min}}}{r}f^2\log r+\mc{O}^\delta\Bigl(\frac{f^2}{r^\ast}\Bigr)
\end{align}
\end{subequations}
in the zero mass and positive mass settings respectively, for $r$ large enough.
Also, 
\begin{equation}
  \label{normal}
  \snabla_{NN}f=\frac{1}{4}\Bigl( \frac{\uu}{\vv}\nabla_{uu}f+ \frac{\vv}{\uu}\snabla_{vv}f-2\snabla_{vu}f  \Bigr)=\frac{3f^2}{2} + \mc{O}^\delta \left( \frac{f^3}{r^*} \right) \text{.}
\end{equation}
Finally, we calculate the off-diagonal terms in the $\sg$-Hessian of $f$ (recall \eqref{r.inv}):
\begin{equation}
\label{off.diag1}
\snabla_{Ti}f=\frac{1}{2r} \left[ \sqrt{\frac{\vv}{\uu}}\snabla_{uI}f+\sqrt{\frac{\uu}{\vv}}\snabla_{vI}f \right] = \mc{O}^\delta \left( \frac{f^2}{r^\ast} \right) \text{,}
\end{equation}
\begin{equation}
\label{off.diag2}
\snabla_{TN}f=\frac{1}{4}\Bigl[ \frac{\uu}{\vv}\snabla_{uu}f - \frac{\vv}{\uu}\snabla_{vv}f \Bigr] = \mc{O}^\delta \Bigl(\frac{f^3}{r^\ast}\Bigr) \text{.} 
\end{equation}
Following the same calculations as in (\ref{off.diag1}) we derive: 
\beq
\label{off.diag3}
\nabla_{Ni}f=O^\delta\Bigl(\frac{f^2}{r^\ast}\Bigr) \text{.}
\eeq

We have thus derived bounds for all components of $(\snabla)^2f$ with respect to the frame $T, N, E_1,\dots, E_{n-1}$. 
For future reference, we note that the above implies:
\begin{align} \label{eq:Boxgsharpf}
\Box_{g^\sharp} f &= (g^\sharp)^{NN}\nabla^\sharp_{NN}f+2(g^\sharp)^{NT}\nabla_{TN}^\sharp f+2(g^\sharp)^{Ni}\nabla_{Ni}^\sharp f \\ 
\notag &\qquad + (g^\sharp)^{TT}\nabla_{TT}^\sharp f+2(g^\sharp)^{Ti}\nabla_{Ti}^\sharp f+(g^\sharp)^{ij}\nabla_{ij}^\sharp f \\
\notag &= \frac{2-(n-1)}{2}f^2-(n-1)\frac{\vv+\uu-r}{r}f^2+\mathcal{O}^\delta\Bigr(\frac{f^2}{r^\ast}\Bigr) \\
\notag &= -\frac{n-3}{2}f^2+\mathcal{O}^\delta (\Psi f^2) \text{,}
\end{align}
where in the last step we have used that by \eqref{eq:easy.upper}, in the case {$\mathbf{(M_+)}$},
\[\lvert \vv+\uu-r\vert=\lvert\rs-r\rvert\leq C \log r\,.\]

\vspace{0.6pc}

We next consider the inverted metric 
\begin{equation}\label{eq:def:sg}
  \bar{g}=f^2\sg\,,
\end{equation}
and seek to calculate the components of $\bar{\nabla}^2f$ in a frame adapted to $f$.
These components tranform under the conformal change \eqref{eq:def:sg} according to the rule:
\begin{equation}
  \label{eq:ctr}
  \ol{\nabla}^2_{\mu\nu} f=\snabla^2_{\mu\nu} f-2(\partial_\mu\log f)(\partial_\nu f)+ (\sg)^{\alpha\beta}(\partial_\alpha\log f)(\partial_\beta f)\sg_{\mu\nu}
\end{equation}
 
Consider the rescaled frame
\begin{equation} 
  \label{eq:frame:bar} 
  \bar{E}_0 := \bar{T} := f^{-1} T \text{,} \qquad \bar{E}_n := \bar{N} := f^{-1} N \text{,} \qquad \bar{E}_i := f^{-1} E_i \text{.} 
\end{equation}
We apply the Gram-Schmidt process to $\bar{E}_1,\ldots, \bar{E}_{n-1},\bar{T}, \bar{N}$ (in that order) to obtain a frame $\tilde{E}_0:=\tilde{T},\tilde{E}_1, \ldots, \tilde{E}_{n-1},\tilde{E}_n:=\tilde{N}$ which is \emph{exactly} $\ol{g}$-orthonormal, \footnote{We let the indices $\tilde{i}$ correspond to the frame~$(\tilde{E}_i)$; the barred indices $\ol{i}$ refer to the frame~$(\bar{E}_i)$.}
\begin{equation}
  \bar{g}_{\tilde{i}\tilde{j}} := \bar{g}(\tilde{E}_i,\tilde{E}_j) = m_{ij}\,.
\end{equation}
Then, by construction, this frame is adapted to $f$, in the sense of Section \ref{sec:carleman}.
In particular, $\tilde{E}_i f = 0$ for all $0\leq i\leq n-1$, while \eqref{frame.orthonormal} implies:
\begin{subequations}\label{eq:new:frame}
\begin{align}
  \tilde{E}_i-\bar{E}_i &= \sum_{1\leq j \leq i}\mathcal{O}^\delta\Bigl(\frac{1}{\rs}\Bigr)\bar{E}_j\,,\qquad 1\leq i\leq n-1\,,\\
  \tilde{T}-\bar{T} &= \mathcal{O}^\delta\left(\frac{f}{\rs}\right)\bar{T}+\sum_{1\leq j\leq n-1}\mathcal{O}^\delta\left(\frac{\sqrt{f}}{\rs}\right)\bar{E}_j\,,\\
  \tilde{N}-\bar{N} &= \mathcal{O}^\delta\Bigl(\frac{f}{\rs}\Bigr)\bar{N}+\mathcal{O}^\delta\Bigl(\frac{f}{\rs}\Bigr)\bar{T}+\sum_{1\leq j\leq n-1}\mathcal{O}^\delta\left(\frac{\sqrt{f}}{\rs}\right)\bar{E}_j\,.
\end{align}
\end{subequations}

First, we compute and estimate $\bar{\nabla} f$, where $\bar{\nabla}$ refers to the Levi-Civita connection of $\ol{g}$.
Using \eqref{eq:TNuv}, \eqref{eq:frame:bar}, and \eqref{eq:new:frame}, we obtain
\[ \bar{\nabla}_{ \tilde{N} } f = \Bigl( 1 + \mc{O}^\delta \Bigl( \frac{f}{r^\ast} \Bigr) \Bigr) \bar{\nabla}_{ \bar{N} } f = f^\frac{1}{2} \Bigl( 1 + \mc{O}^\delta \Bigl( \frac{f}{r^\ast} \Bigr) \Bigr) \text{.} \]
In particular, this implies part of the estimates in \eqref{eq.carleman_f1_ass_f_strong} and \eqref{eq.carleman_f2_ass_f_strong}:
\begin{equation} \label{eq:grad:N} | f^{-\frac{1}{2}} \bar{\nabla}_{ \tilde{N} } f - 1 | = \mc{O}^\delta \Bigl( \frac{f}{r^\ast} \Bigr) = \mc{O}^\delta ( \Psi ) \text{.} \end{equation}
We also note here that
\begin{equation}
  \label{eq:snabla:gradf}
  (\sg)^{\alpha\beta}(\partial_\alpha f)(\partial_\beta f) = (\sg)^{NN}(\nabla_N f)^2=f^3+\mathcal{O}^\delta\Bigl(\frac{f^4}{\rs}\Bigr)\,.
\end{equation}

We next wish to calculate the components of $\bar{\nabla}^2 f$.
For this, we use the transformation law \eqref{eq:ctr} for the Hessian of $f$ under conformal rescalings \eqref{eq:def:sg}.
Setting
\begin{equation}
h' := w+\frac{1}{2} \text{,}
\end{equation}
and using \eqref{eq:snabla:gradf}, we find, for $1 \le i,j \le n-1$,
\begin{align*}
-\ol{\nabla}_{\ol{i}\ol{j}}f &+ h\rq{} \ol{g}_{\ol{i}\ol{j}} = \\
&=-f^{-2} \Bigl[ \nabla^\sharp_{ij}f + (\sg)^{NN} \nabla_{N}(\log f) (\nabla_{N} f) \sg_{ij} - h' f^2 \Bigl( \delta_{ij} + \mc{O}^\delta \Bigl(\frac{1}{r^*}\Bigr) \Bigr) \Bigr] \\
&=-f^{-2}\Bigl[ \nabla^\sharp_{ij} f + f^2 \delta_{ij} - \frac{1}{2} f^2 \delta_{ij} - w f^2 \delta_{ij} + \mc{O}^\delta \Bigl( \frac{f^2}{r^\ast} \Bigr) \Bigr] \text{,}
\end{align*}
where in applying \eqref{eq:ctr}, we have used that $\ol{E}_i f = 0$.
Now, recalling the definition of $w$ in the statement of the proposition, we see that
\begin{align*}
\mathbf{(M_0)}\ w &= - \frac{\e}{2 r^\ast} = - \frac{\e}{2 r} \text{,} \\
\mathbf{(M_+)}\ w &= - \frac{m_{\min}}{2 r^\ast} \log r^\ast \geq - \frac{3 m_{\min}}{4 r} \log r \text{.}
\end{align*}
where we recalled that $r^\ast = r$ in the $\mathbf{(M_0)}$-case and used \eqref{key.bound} in the $\mathbf{(M_+)}$-case.

Now, combining the above and applying \eqref{other.orth}, we obtain
\begin{align}
\notag -\ol{\nabla}_{\ol{i}\ol{j}}f &+ h\rq{} \ol{g}_{\ol{i}\ol{j}} \\
\mathbf{(M_0)}\ &\geq \frac{1}{2}\frac{\e}{r}\delta_{ij}+\mc{O}^\delta \Bigl(\frac{1}{r^\ast}\Bigr) \text{,} \\
\mathbf{(M_+)}\ &\geq \frac{1}{4}\frac{m_{\rm min}}{r}\log r\,\delta_{ij}+\mc{O}\Bigl(\frac{1}{r^\ast}\Bigr) \text{,} \notag
\end{align}
Similarly, using \eqref{eq:snablaf:ij} and \eqref{eq:easy.upper},
\begin{equation}
  -\ol{\nabla}_{\ol{i}\ol{j}}f+h^\prime\ol{g}_{\ol{i}\ol{j}}=\Bigl[ \frac{\vv+\uu-r}{2 r} + w \Bigr] \delta_{ij}+\mathcal{O}^\delta\Bigl(\frac{1}{r^\ast}\Bigr) = \mc{O}^\delta (\Psi) \text{.}
\end{equation}
Therefore, using \eqref{eq:new:frame}, we obtain, for $\rs$ sufficiently large,
\begin{equation}
\label{eq:hessian:ij} -\ol{\nabla}_{\tilde{i}\tilde{j}}f+h^\prime\ol{g}_{\tilde{i}\tilde{j}} = -\ol{\nabla}_{\ol{ij}}f+h^\prime\ol{g}_{\ol{ij}}+\mathcal{O}^\delta\Bigl(\frac{\Psi}{\rs}\Bigr) \simeq \Psi\ \delta_{ij}+\mathcal{O}^\delta\Bigl(\frac{\Psi}{\rs}\Bigr) \text{.}
\end{equation}

Next, using \eqref{off.diag1}, \eqref{eq:ctr}, and \eqref{eq:new:frame}, we find that for $1\le i\le n-1$,
\begin{align}
\label{eq:hessian:Ti} -\ol{\nabla}_{\bar{T} \bar{i}} f+h' \bar{g}_{\bar{T} \bar{i}} &= -f^{-2} [ \nabla^\sharp_{Ti} f + (\sg)^{NN} \nabla_N (\log f) (\nabla_N f) \sg_{Ti} ] + h' \ol{g}_{\bar{T} \bar{i}} \\
\notag &=-f^{-2} \nabla^\sharp_{Ti} f + \mc{O}^\delta \Bigl(\frac{1}{r^*}\Bigr)=\mc{O}^\delta\Bigl(\frac{1}{r^*}\Bigr) \text{,} \\
\notag -\ol{\nabla}_{\tilde{T} \tilde{i}} f + h^\prime \bar{g}_{\tilde{T} \tilde{i}} &= \Bigl[ 1 + \mc{O}^\delta \Bigl( \frac{1}{r^\ast} \Bigr) \Bigr] \sum_{1 \leq k \leq n-1} [ - \ol{\nabla}_{\bar{T} \bar{k}} f + h^\prime \bar{g}_{\bar{T} \bar{k}} ] \\
\notag &\qquad + \sum_{1 \leq k,l \leq n-1} \mathcal{O}^\delta \left( \frac{\sqrt{f}}{\rs} \right) \ol{\nabla}_{\bar{k} \bar{l}} f = \mathcal{O}^\delta (\Psi) \text{.}
\end{align}
Analogously, from \eqref{off.diag3}, \eqref{eq:ctr}, and \eqref{eq:new:frame}, we also see that
\begin{equation} \label{eq:hessian:Ni} \ol{\nabla}_{\bar{N} \bar{i}} f = f^{-2} \nabla^\sharp_{N i} f + \mc{O}^\delta \Bigl( \frac{1}{r^\ast} \Bigr) = \mc{O}^\delta \Bigl( \frac{1}{r^*} \Bigr) \text{,} \qquad \ol{\nabla}_{\tilde{N}\tilde{i}}f = \mc{O}^\delta ( \Psi ) \text{.} \end{equation}

Applying \eqref{eq:snablaf:TT}, we can compute
\begin{align*}
-\ol{\nabla}_{\ol{T}\ol{T}} f &+ h^\prime \bar{g}_{\ol{T}\ol{T}} \\
&= -f^{-2} \Bigl[\nabla^\sharp_{TT}f+(\sg)^{NN}\nabla_{N}(\log f)(\nabla_N f)\sg_{TT}-h'f^2 \sg_{TT} \Bigr] \\
&= -f^{-2} \Bigl[ \frac{1}{2} f^2 - f^2 + \frac{1}{2} f^2 + w f^2 + \mc{O}^\delta \Bigl( \frac{f^2}{r} \Bigr) \Bigr] = -w + \mc{O}^\delta \Bigl( \frac{1}{r^\ast} \Bigr) \text{,} 
\end{align*}
and henceforth
\begin{equation}
\label{eq:hessian:TT} -\ol{\nabla}_{\tilde{T}\tilde{T}}f+ h'\ol{g}_{\tilde{T}\tilde{T}}=-\ol{\nabla}_{\ol{TT}}f+h^\prime\ol{g}_{\ol{TT}}+\mathcal{O}^\delta\left(\frac{\sqrt{f}w}{\rs}\right)
\simeq \Psi \:\lvert \ol{g}_{\tilde{T}\tilde{T}}\rvert\,.
\end{equation}
Furthermore, from \eqref{off.diag2}, we have
\begin{equation} \label{eq:hessian:NT} \begin{split}
\ol{\nabla}_{\bar{T} \bar{N}} f = f^{-2} [ \nabla^\sharp_{TN} f + (\sg)^{NN} \nabla_N (\log f) (\nabla_N f) \sg_{TN} ] = \mc{O}^\delta\Bigl(\frac{1}{r^*}\Bigr) \text{,} \\
\ol{\nabla}_{\tilde{T}\tilde{N}}f = \mc{O}^\delta ( \Psi ) \text{.}
\end{split} \end{equation}
Combining \eqref{eq:grad:N}, \eqref{eq:hessian:Ni}, and \eqref{eq:hessian:NT}, we obtain the requirements \eqref{eq.carleman_f1_ass_f_strong} and \eqref{eq.carleman_f2_ass_f_strong}.

From \eqref{normal} and \eqref{eq:ctr}, we find
\begin{align*}
\ol{\nabla}_{\ol{N}\ol{N}}f &= f^{-2} [ \nabla^\sharp_{NN} f - 2 f^{-1} (\nabla_N f)^2 + f^{-1} (\sg)^{NN} (\nabla_N f)^2 \sg_{NN}] \\
&= \frac{1}{2} + \mc{O}^\delta \Bigl(\frac{1}{\rs}\Bigr) \text{.}
\end{align*}
Thus, we derive, using \eqref{eq:new:frame},
\begin{equation}
\label{eq:hessian:NN} \Bigl\lvert \ol{\nabla}_{\tilde{N}\tilde{N}} f - \frac{1}{2} \Bigr\rvert \leq \Bigl\lvert \ol{\nabla}_{\bar{N} \bar{N}} f - \frac{1}{2} \Bigr\rvert +\mathcal{O}^\delta\Bigl(\frac{f}{\rs}\Bigr) = \mc{O}^\delta \Bigl( \frac{1}{r^\ast} \Bigr) \text{.}
\end{equation}

Now, combining \eqref{eq:hessian:ij}, \eqref{eq:hessian:TT}, and \eqref{eq:hessian:NN}, we also obtain
\begin{equation}
\label{eq:box} \Box_{\ol{g}} f = \frac{n+1}{2} + \mc{O}^\delta (\Psi) \text{,}
\end{equation}
which along with \eqref{eq:hessian:NN} proves \eqref{eq.carleman_f1_ass_f_weak} and \eqref{eq.carleman_f2_ass_f_weak}.
Moreover, setting 
\[ h := w + \frac{1}{2}\Box_{\ol{g}} f - \frac{n-1}{4} \text{,} \]
and applying \eqref{eq:box}, we find that $-\ol{\nabla}^2f+h\,\ol{g}$ satisfies, as desired, \eqref{eq.carleman_f1_ass_pseudoconvex} and \eqref{eq.carleman_f2_ass_pseudoconvex}.

\vspace{0.6pc}

It remains to check the requirements \eqref{eq.carleman_f1_ass_w} and \eqref{eq.carleman_f2_ass_w} on the function $w$.
Clearly, by definition, $w = \mc{O}^\delta (\Psi)$ (this has already been used earlier).
Moreover, using the Christoffel symbols \eqref{eq:sGamma}, we calculate $\Box_{\ol{g}} w$:
\begin{equation}
  \begin{split}
    \Box_{\bar{g}} w &= f^{-2} [ \Box_\sg w + (n-1) f^{-1} (\sg)^{\alpha\beta} \snabla_\alpha f \snabla_\beta w ] \\
    \mathbf{(M_0)} &= f^{-2}\Bigl[\mc{O}^\delta\Bigl(\frac{1}{{r^*}^3}\Bigr)+\mc{O}^\delta\Bigl(\frac{f}{r^*}\Bigr)\Bigr]=\mc{O}^\delta(f^{-\frac{1}{2}})\,,\\
    \mathbf{(M_+)} &= f^{-2}\Bigl[\mc{O}^\delta\Bigl(\frac{\log r^*}{{r^*}^3}\Bigr)+\mc{O}^\delta\Bigl(\frac{f\log r^*}{r^*}\Bigr)\Bigr]=\mc{O}^\delta(f^{-\frac{1}{2}-\eta})\,,\quad \eta>0\,.
  \end{split}
\end{equation}

This completes the proof that $w$, $\Psi$, $f$, and the frame $\tilde{E}_0, \dots, \tilde{E}_n$ have the required properties \eqref{eq.carleman_f1_ass_gap}-\eqref{eq.carleman_f1_ass_w} of Proposition \ref{thm.carleman_f1} and \eqref{eq.carleman_f2_ass_gap}-\eqref{eq.carleman_f2_ass_w} of Proposition \ref{thm.carleman_f2}, on $(\mathcal{D},\ol{g})$.
Finally, the estimate \eqref{calclns.est} follows by unwinding the definitions of the $\tilde{E}_\alpha$'s---in particular, this is a consequence of \eqref{eq:TNuv}, \eqref{eq:frame:bar}, and \eqref{eq:new:frame}.
\end{proof}

\subsection{Carleman Estimates and Uniqueness} \label{sec:uc:arg}

Recall we have shown in Proposition \ref{calclns} that the inverted metrics $\bar{g}$, along with $f$, $w$, $\Psi$, satisfy on $\mc{D}_{\omega^\prime}$ the requirements \eqref{eq.carleman_f1_ass_gap}-\eqref{eq.carleman_f1_ass_w} of Propositions \ref{thm.carleman_f1} and \eqref{eq.carleman_f2_ass_gap}-\eqref{eq.carleman_f2_ass_w} of Propositon \ref{thm.carleman_f2}, if $\omega^\prime$ is sufficiently small.
To complete the proofs of our main theorems, it remains to apply the Carleman estimates \eqref{eq.carleman_f1} and \eqref{eq.carleman_f2} to show that $\phi$ vanishes on $\mc{D}_{\omega^\prime}$.

To be more specific, we will apply the Carleman estimates as follows:
\begin{itemize}
\item[($\mathbf{P}_d$)] To prove Theorems \ref{thm1} and \ref{thm1.pm} (i.e., wave equations with decaying potentials), we will apply Proposition \ref{thm.carleman_f1}, with $p = 2 \eta$.

\item[($\mathbf{P}_b$)] To prove Theorems \ref{O1} and \ref{O1.pm} (i.e., wave equations with bounded potentials), we will apply Proposition \ref{thm.carleman_f2}, with $q = 2/3$.
\end{itemize}
For conciseness, we discuss only the former case ($\mathbf{P}_d$) in the main discussion.
The latter path ($\mathbf{P}_b$) is proved entirely analogously; differences in the computations will be addressed in remarks and footnotes.

\subsubsection{Bounds for $\bar{L}_{ \bar{g} }$}

The first step is to obtain the asymptotic bounds for the lower-order terms $\bar{a}^\alpha$ and $\bar{V}$ of the operator $\bar{L}_{ \bar{g} }$, defined in \eqref{eq:Lbar} and corresponding to $L_g$ in the inverted setting.
Recall once again that $\bar{g}$ is obtained from $g$ by
\[ \bar{g} = K^{-1} f^2 g = \Omega^{2} g \text{.} \]
Here, it will also be useful to decompose the above into two steps,
\[ g^\sharp = \Omega_1^{2} g \text{,} \qquad \bar{g} = \Omega_2^{2} g^\sharp \text{,} \qquad \Omega = \Omega_1\cdot\Omega_2 \text{,} \]
where
\[ \Omega_1^{-2} = 1 - \frac{2 m}{r} \text{,} \qquad \Omega_2 = f \text{.} \]

Note that from \eqref{eq:Lbar}, we immediately obtain
\begin{equation} \label{Vabar_pre} | \bar{a}^\alpha | \lesssim f^{-2} | a^\alpha | \text{,} \qquad | \Omega^{-2} V | \lesssim f^{-2} | V | \lesssim f^{-1 + \eta} \text{.} \end{equation}
Furthermore, another term in the expansion of $\bar{V}$ can be controlled by
\[ | \Omega^{-4} a^\alpha \partial_\alpha \Omega^{2} | \lesssim f^{-4} | a^\alpha \partial_\alpha ( K^{-1} f^2 ) | \lesssim f^{-3} | a^\alpha \partial_\alpha f | + f^{-2} \left| a^\alpha \partial_\alpha \left( \frac{m}{r} \right) \right| \text{.} \]
Recalling the assumptions (\eqref{thm1.coeff} or \eqref{thm1.pm.coeff}) for the $a^\alpha$'s, bounding $\partial_\alpha (m/r)$ using \eqref{eq:dmr}, and recalling \eqref{r.inv}, we see that
\begin{equation} \label{Vbar_2} | \Omega^{-4} a^\alpha \partial_\alpha \Omega^{2} | \lesssim f^{-3} \cdot f^2 r^{-\frac{1}{2}} + f^{-2} \cdot f^{\frac{1}{2} + \eta} r^{-\frac{3}{2}} \lesssim f^{-1 + \eta} \text{.} \end{equation}

The only remaining term in \eqref{eq:Lbar} is the difference of scalar curvatures.
Recalling the general identity \eqref{yamabe} and the bounds \eqref{eq:dmr} and \eqref{eq:ddmr}, we see that
\begin{equation} \label{Vbar_31} | \Omega_1^{-2} R_{ g^\sharp } - R_g | \lesssim \left| \Box_g \left( \frac{m}{r} \right) \right| + \left| g^{\alpha\beta} \partial_\alpha \left( \frac{m}{r} \right) \partial_\beta \left( \frac{m}{r} \right) \right| \lesssim f^{-1 + \eta} \text{.} \end{equation}
Moreover, by \eqref{yamabe}, along with \eqref{eq:Boxgsharpf} and \eqref{eq:snabla:gradf}, we see that
\begin{align}
\label{Vbar_32} \Omega_2^{-2} R_{ \bar{g} } - R_{ g^\sharp } &= 2 n f^{-3} \Box_g f + n (n - 3) f^{-4} \cdot g ( \nabla f, \nabla f ) \\
\notag &= - n (n - 3) f^{-1} + n (n - 3) f^{-1} + \mc{O} ( f^{-1} \Psi ) = \mc{O} ( f^{-1 + \eta} ) \text{.}
\end{align}
Combining \eqref{Vbar_31} and \eqref{Vbar_32}, we obtain the estimate
\begin{equation} \label{Vbar_3} | \Omega^{-2} R_g - R_{ \bar{g} } | \lesssim \Omega_1^{-2} | \Omega_2^{-2} R_{ \bar{g} } - R_{ g^\sharp } | + | \Omega_1^{-2} R_{ g^\sharp } - R_g | \lesssim f^{-1 + \eta} \text{.} \end{equation}

From \eqref{Vabar_pre} and our assumptions for the $a^\alpha$'s (e.g., \eqref{thm1.coeff}), we have
\begin{equation} \label{abar_est} \bar{a}^u = \mc{O} ( \uu f^{-1} r^{-\frac{1}{2}} ) \text{,} \qquad \bar{a}^v = \mc{O} ( \vv f^{-1} r^{-\frac{1}{2}} ) \text{,} \qquad \bar{a}^A = \mc{O} ( f^{-\frac{3}{2}} r^{-\frac{3}{2}} ) \text{.} \end{equation}
Similarly, for $\bar{V}$, we have from \eqref{Vabar_pre}, \eqref{Vbar_2}, and \eqref{Vbar_3} that
\begin{equation} \label{Vbar_est} \bar{V} = \mc{O} ( f^{-1 + \eta} ) \text{.} \end{equation}
The derivations of bounds for $\bar{a}^\alpha$ and $\bar{V}$ in setting of ($\mathbf{P}_b$) is analogous.

\subsubsection{The Vanishing Condition}

The next step is to apply the Carleman estimate \eqref{eq.carleman_f1}.
Let $\phi$ satisfy $L_g \phi \equiv 0$, and let $\bar{\phi} = \Omega^{-\frac{n - 1}{2}} \phi$, so that $\bar{L}_{ \bar{g} } \bar{\phi} \equiv 0$ (see Section \ref{sec:operators:transform}).
Fix a cutoff function $\chi \in \smooth{0, 1}$ satisfying $\chi (x) = 1$ for $x \le 1 - \kappa$ ($0 < \kappa < 1$) and $\chi (x) = 0$ near $x = 1$, and consider $\tilde{\chi}, \tilde{\phi} \in \smooth{ \mc{D}_{ \omega^\prime } }$, given by
\[ \tilde{\chi} := \chi \circ ( \omega^{\prime -1} f ) \text{,} \qquad \tilde{\phi} := \tilde{\chi} \bar{\phi} \text{,} \] 
We claim that $\tilde{\phi}$ fulfills the vanishing requirement \eqref{eq.carleman_f1_ass_vanishing} in Proposition \ref{thm.carleman_f1}.
\footnote{The proof of superexponential vanishing, \eqref{eq.carleman_f2_ass_vanishing}, in the setting of ($\mathbf{P}_b$) is analogous but slightly easier, since the vanishing assumptions \eqref{key.assn1.O1} and \eqref{key.assn1.O1pm} are stronger.}

Since $\tilde{\chi}$ is bounded, the assumptions \eqref{key.assn1} and \eqref{key.assn1.M} imply that
\[ \int_{ \mc{D}_{\omega'} } ( \tilde{\phi}^2 + | \partial_u \tilde{\phi} |^2 + | \partial_v \tilde{\phi} |^2 + | \partial_I \tilde{\phi} |^2 ) r^N \ud V_g < \infty \text{,} \qquad N \in \mathbb{N} \text{,} \]
where $\ud V_g$ is the volume form with respect to the \emph{physical} metric $g$.
From \eqref{r.inv} and \eqref{calclns.est}, we see that each $\tilde{E}_\alpha$, $0 \leq \alpha \leq n$, can be expanded as
\[ \tilde{E}_\alpha = c^u \partial_u + c^v \partial_v + \sum_{I = 1}^{n-1} c^I \partial_I \text{,} \qquad | c^u | + | c^v | + | c^I | \lesssim r^N \text{,} \]
for some $N \in \mathbb{N}$.
As a result, we have the following integral vanishing condition on $\bar{\phi}$, with respect to inverted metric $\bar{g}$ and its associated volume form $d V_{ \bar{g} }$,
\begin{equation} \label{vanish_inv} \int_{ \mc{D}_{\omega'} } ( \tilde{\phi}^2 + | \bar{\nabla} \tilde{\phi} |_{ \bar{g} }^2 ) r^N \ud V_{ \bar{g} } < \infty \text{,} \qquad N \in \mathbb{N} \text{,} \end{equation}
where $| \bar{\nabla} \bar{\phi} |_{ \bar{g} }$ denotes the frame-based ($\bar{g}$-)tensor norm defined in \eqref{eq.norm_vf}, \eqref{eq.norm_cov}.

Given $\lambda > 0$, we construct an exhaustion of $\mc{D}_{ \omega^\prime }$ (as defined in Section \ref{sec:carleman}) by
\[ \mc{D}_k = \{ \tau_k \le f \le \omega^\prime \text{, } \sigma^-_k \le v + u \le \sigma^+_k \} \text{,} \]
where $\tau_k \searrow 0$, $\sigma^-_k \searrow -\infty$, $\sigma^+_k \nearrow +\infty$ are to be chosen.
Note that
\begin{align*}
\partial \mc{D}_k = \mc{H}_k \cup \mc{H} \cup \Sigma_k^+ \cup \Sigma_k^- \text{,} &\qquad \mc{H}_k = \{ f = \tau_k \text{, } \sigma^-_k < v + u < \sigma^+_k \} \text{,} \\
\mc{G}_k = \{ f = \omega^\prime \text{, } \sigma^-_k < v + u < \sigma^+_k \} \text{,} &\qquad \Sigma_k^\pm = \{ \tau^-_k < f < \tau^+_k \text{, } v + u = \sigma^\pm_k \} \text{,}
\end{align*}
$\mc{H}_k$ and $\mc{G}_k$ are timelike, and the $\Sigma^\pm_k$'s are spacelike.
Also, let $\nu_k$ denote the boundary of $\partial \mc{D}_k$.
Due to the support of $\tilde{\phi}$, we have for any $N \in \mathbb{N}$ that 
\beq
\label{seq1} \lim_{k \nearrow \infty} \int_{ \mc{G}_k } ( | \nu_k |_{ \bar{g} } + | \bar{\nabla}_{ \nu_k } w | ) ( \tilde{\phi}^2 + | \bar{\nabla} \tilde{\phi} |^2_{ \bar{g} } ) f^{-N} e^{N f^p} \ud \bar{V}_{ \mc{H} } = 0 \text{,}
\eeq
where $\ud \bar{V}_{ \mc{G}_k }$ is the volume form of the induced metric $\bar{g} |_{ \mc{G}_k }$.
 
For $\mc{H}_k$, we apply the co-area formula to \eqref{vanish_inv} and derive that for all $N \in \mathbb{N}$,
\begin{equation} \label{N-fin}
\int_0^{\omega'} \int_{ \{ f = \tau \} } ( \tilde{\phi}^2 + | \bar{\nabla} \tilde{\phi} |_{ \bar{g} }^2 ) r^N | \bar{\nabla} f |_{ \bar{g} }^{-1} \ud \bar{V}_{ \{ f = \tau \} } \ud \tau < \infty \text{.}
\end{equation}
Note that because of \eqref{r.inv} and \eqref{eq:snabla:gradf}, we can drop the factor $| \bar{\nabla} f |_{ \bar{g} }^{-1}$ in \eqref{N-fin}.
Moreover, by \eqref{r.inv} and the observation that
\[ f^{-N} e^{N f^p} \lesssim f^{-2 N} \lesssim r^{4 N} \text{,} \]
it follows that there exists $\tau_k \searrow 0$ so that
\[ \lim_{k \nearrow \infty} \int_{ \mc{H}_k } ( \tilde{\phi}^2 + | \bar{\nabla} \tilde{\phi} |^2_{ \bar{g} } ) f^{-N} e^{N f^p} \ud \bar{V}_{ \mc{H}_k } = 0 \text{.} \]
The unit normal $\nu_k$ on $\mc{H}_k$ is simply $| {\rm grad} f |_{ \bar{g} }^{-1} \cdot {\rm grad} f$, so we can also bound $| \nu_k |$ by some power of $r$.
Recalling the definition of $w$ (see Proposition \ref{calclns}), we can also similarly bound $| \bar{\nabla}_{\nu_k} w |$.
Combining the above points, we hence obtain the limit
\beq
\label{seq2} \lim_{k \nearrow \infty} \int_{ \mc{H}_k } ( | \nu_k |_{ \bar{g} } + | \bar{\nabla}_{ \nu_k } w | ) ( \tilde{\phi}^2 + | \bar{\nabla} \tilde{\phi} |^2_{ \bar{g} } ) f^{-N} e^{N f^p} \ud \bar{V}_{ \mc{H}_k } = 0 \text{.}
\eeq

The corresponding limits on $\Sigma_k^\pm$ are handled similarly.
In this case, we have
\footnote{In particular, $\nu_k$ will be close to $f^{-1} ( \partial_u + \partial_v )$, which can be expanded in terms of $\bar{T}$ and $\bar{N}$.}
\[ | \nu_k |_{ \bar{g} } \lesssim r^d \text{,} \qquad | \bar{\nabla}_{ \nu_k } w | \lesssim r^d \text{,} \]
for some $d > 0$.
By another co-area formula argument and \eqref{r.inv}, we obtain
\beq
\label{seq3} \lim_{k \nearrow \infty} \int_{ \Sigma_k^+ \cup \Sigma_k^- } ( | \nu_k |_{ \bar{g} } + | \bar{\nabla}_{ \nu_k } w | ) ( \tilde{\phi}^2 + | \bar{\nabla} \tilde{\phi} |^2_{ \bar{g} } ) f^{-N} e^{N f^p} \ud \bar{V}_{ \Sigma_k^+ \cup \Sigma_k^- } = 0 \text{.}
\eeq
Thus, we conclude that the vanishing condition \eqref{eq.carleman_f1_ass_vanishing} holds.

\subsubsection{The Carleman Estimate}

From now on, we assume all spacetime integrals are with respect to $\ud V_{ \bar{g} }$.
Recalling Proposition \ref{calclns} along with the above, we can now apply the Carleman estimate \eqref{eq.carleman_f1} to $\tilde{\phi}$.
In particular, using \eqref{eq.carleman_f1} and bounding the $\tilde{E}_\alpha \tilde{\phi}$ from below using \eqref{calclns.est}, we see that
\begin{align}
\label{carleman_1} \int_{ \mc{D}_{ \omega^\prime } } f^{-2 \lambda + 1} e^{2 \lambda f^p} | \Box_{ \bar{g} } \tilde{\phi} |^2 &\gtrsim \lambda \int_{ \mc{D}_{ \omega^\prime } } f^{-2 \lambda + 1} e^{2 \lambda f^p} f^{-2} \Psi [ \uu^2 ( \partial_u \tilde{\phi} )^2 + \vv^2 ( \partial_v \tilde{\phi} )^2 ] \\
\notag &\qquad + \lambda \int_{ \mc{D}_{ \omega^\prime } } f^{-2 \lambda + 1} e^{2 \lambda f^p} f^{-3} \cdot r^{-2} \Psi \g^{AB} \partial_A \tilde{\phi} \partial_B \tilde{\phi} \\
\notag &\qquad + \lambda^3 \int_{ \mc{D}_{ \omega^\prime } } f^{-2 \lambda + 1} e^{2 \lambda f^p} \cdot f^{-2 + p} \tilde{\phi}^2 \text{.}
\end{align}

\begin{remark}
On the other hand, in the setting of ($\mathbf{P}_b$), we apply Proposition \ref{thm.carleman_f2} instead, with $q = 2/3$.
The corresponding estimate is
\begin{align}
\label{carleman_2} \int_{ \mc{D}_{ \omega^\prime } } f^\frac{5}{3} e^{2 \lambda f^{-\frac{2}{3}} } | \Box_{ \bar{g} } \tilde{\phi} |^2 &\gtrsim \lambda \int_{ \mc{D}_{ \omega^\prime } } f^\frac{5}{3} e^{2 \lambda f^{-\frac{2}{3}}} \cdot f^{-\frac{8}{3}} \Psi [ \uu^2 ( \partial_u \tilde{\phi} )^2 + \vv^2 ( \partial_v \tilde{\phi} )^2 ] \\
\notag &\qquad + \lambda \int_{ \mc{D}_{ \omega^\prime } } f^\frac{5}{3} e^{2 \lambda f^{-\frac{2}{3}} } \cdot f^{-\frac{11}{3}} r^{-2} \Psi \g^{AB} \partial_A \tilde{\phi} \partial_B \tilde{\phi} \\
\notag &\qquad + \lambda^3 \int_{ \mc{D}_{ \omega^\prime } } f^\frac{5}{3} e^{2 \lambda f^{-\frac{2}{3}} } \cdot f^{-4} \tilde{\phi}^2 \text{.}
\end{align}
\end{remark}

\subsubsection{Unique Continuation}
 
The proof of the vanishing of $\tilde{\phi}$ in $\mc{D}_{\omega'}$ now follows from the Carleman estimates via a standard argument.
First, observe that
\beq
\label{tilde.expand} \Box_{ \bar{g} } \tilde{\phi} = ( \Box_{ \bar{g} } \bar{\phi} ) \tilde{\chi} + \bar{\nabla}_\alpha \bar{\phi} \bar{\nabla}^\alpha \tilde{\chi} + \bar{\phi} \Box_{ \bar{g} }\tilde{\chi} = - \bar{a}^\alpha \partial_\alpha \tilde{\phi} - \bar{V} \tilde{\phi} + \mc{J} \text{.}
\eeq
Here, $\mc{J}$ is a function depending on $\tilde{\phi}$ and $\bar{\nabla} \tilde{\phi}$ and supported in $\{ (1 - \kappa) \omega^\prime \le f < \omega^\prime \}$.
Moreover, from the vanishing properties of $\bar{\phi}$ and $\bar{\nabla} \bar{\phi}$, we can also see that
\[ \int_{ \mc{D}_{ \omega^\prime } } \mc{J}^2 r^N < \infty \text{,} \qquad N \in \mathbb{N} \text{.} \]

Thus, applying \eqref{carleman_1} and \eqref{tilde.expand}, we derive:
\begin{align}
\label{carl.1} &\lambda \int_{ \mc{D}_{ \omega' } } f^{-2 \lambda + 1} e^{2 \lambda f^p} \cdot f^{-2} \Psi \cdot [ \uu^2 ( \partial_u \tilde{\phi} )^2 + \vv^2 ( \partial_v \tilde{\phi} )^2 ] \\
\notag &\qquad + \lambda \int_{ \mc{D}_{ \omega' } } f^{-2 \lambda + 1} e^{2 \lambda f^p} \cdot f^{-3} r^{-2} \Psi \cdot \g^{AB} \partial_A \tilde{\phi} \partial_B \tilde{\phi} \\
\notag &\qquad + \lambda^3 \int_{ \mc{D}_{ \omega' } } f^{-2 \lambda + 1} e^{2 \lambda f^p} \cdot f^{-2 + p} \cdot \tilde{\phi}^2 \\
\notag &\lesssim \int_{ \mc{D}_{ \omega' } } f^{-2 \lambda + 1} e^{2 \lambda f^p} \cdot ( | \bar{a}^\alpha \partial_\alpha \tilde{\phi} |^2 + | \bar{V} \tilde{\phi} |^2 + \mc{J}^2 ) \text{.}
\end{align}
As long as $\lambda$ is sufficiently large, in view of the bounds on $\bar{a}^u$, $\bar{a}^v$, $\bar{a}^I$, and $V$ (see \eqref{abar_est} and \eqref{Vbar_est}), we can absorb the first two terms in the right-hand side of \eqref{carl.1} into the left-hand side.
Moreover, recalling the support of $\mc{J}$, and dropping the first-order terms in the left-hand side, we see that
\begin{align}
\label{carl.2}  & \lambda^3 \int_{ \mc{D}_{ (1 - \kappa) \omega' } } f^{-2 \lambda + 1} e^{2 \lambda f^p} \cdot f^{-2 + p} \cdot \tilde{\phi}^2 
\lesssim \int_{ \mc{D}_{ \omega^\prime} \setminus \mc{D}_{ (1 - \kappa) \omega' } } f^{-2 \lambda + 1} e^{2 \lambda f^p} \cdot \mc{J}^2 \text{.}
\end{align}
Since the weight $f^{-2 \lambda + 1} e^{2 \lambda f^p}$ (for sufficiently small $f$) is bounded from below on $\mc{D}_{ (1 - \kappa) \omega^\prime }$ by its value at $f = (1 - \kappa) \omega^\prime$ and is bounded from above on $\mc{D}_{ \omega^\prime } \setminus \mc{D}_{ (1 - \kappa) \omega^\prime }$ by the same value, then we can drop the 
weight from \eqref{carl.2}:
\begin{align}
\label{carl.3} 
&\qquad  \int_{ \mc{D}_{ (1 - \kappa) \omega' } } f^{-2 + p} \cdot \tilde{\phi}^2 \lesssim \lambda^{-3} \int_{ \mc{D}_{ \omega^\prime} \setminus \mc{D}_{ (1 - \kappa) \omega' } } \mc{J}^2 \text{.}
\end{align}

Letting $\lambda \nearrow \infty$, we see that $\bar{\phi} = \tilde{\phi} \equiv 0$ on $\mc{D}_{ (1 - \kappa) \omega' }$.
Since this holds for all $\kappa>0$, this completes the proofs of the main theorems. 
\footnote{Again, the corresponding argument from \eqref{carleman_2} in the ($\mathbf{P}_b$)-case is entirely analogous.}

\appendix

\section{Kerr Metric in Comoving Coordinates} \label{sec:kerr}

 We show here that the Kerr space-times of mass $m>0$ and any angular momentum $a$ 
 fulfill the requirements of Theorem \ref{thm1.pm}. This necessitates a non-trivial choice of ``co-moving'' coordinates.
Their effect is to adapt to the underlying angular momentum, and yield a form of the Kerr metiric which asymptotes to the Schwarzschild solution 
 one order {\it faster} than in the Boyer-Lindquist coordinates; these coordinates were first introduced by Carter in the context of Kerr de Sitter metrics \cite{carter:lecture}.
\footnote{We should note here that 
 the Kerr metric has been written in double null cooridnates by Israel and Pretorius in \cite{frans:kerr}. However
 this depends on some implicitly defined functions and it does not seem straightforward to check that 
 the requirements of Theorem \ref{thm1.pm} are fulfilled.}
\vspace{0.6pc}

We first  perform this change of coordinates in Minkowski spacetime.
Express the $(3+1)$-dimensional Minkowski space-time
in the usual  $(t_0,r_0,\theta_0,\phi_0)$-coordinates:
\begin{equation}
  g_0=-\ud t_0^2+\ud r_0^2+r_0^2\bigl(\ud\theta_0^2+\sin^2\theta_0\ud\phi_0^2\bigr)
\end{equation}
We now change to a ``comoving'' coordinate system given by the transformation:
\begin{equation}
  (t_0,r_0,\theta_0,\phi_0)\to (t,r,\theta,\phi)\,,
\end{equation}
where for some fixed $a>0$,
\begin{subequations}
  \begin{gather}
  t_0(t)=t\,,\qquad
  \phi_0(\phi)=\phi\,,\\
  r_0^2(\theta,r)=r^2+a^2\sin^2\theta\label{eq:transform:r}\,,\qquad
  r_0(\theta,r) \cos\theta_0(\theta,r)= r\cos\theta\,.
\end{gather}\label{eq:transformation}
\end{subequations}
Let us also denote by
\begin{equation}
  \rho^2=r^2+a^2\cos^2\theta
\end{equation}
The metric expressed in these comoving coordinates  takes the form:
\begin{equation}
  g_0=f_0+h_0\,,
\end{equation}
where
\begin{equation}
 f_0=-\ud t^2+(r^2+a^2)\,\sin^2\theta\,\ud\phi^2, \qquad    h_0=\frac{\rho^2}{(r^2+a^2)}\,\ud r^2+\rho^2\,\ud\theta^2\,.
\end{equation}

Let us now recall the Kerr spacetime. It has two Killing vectorfields $T$, $\Phi$, and the metric also takes the form
\begin{subequations}\label{eq:kerr}
\begin{equation}  
  g=f+h
\end{equation}
where $f$ in Boyer-Lindquist coordinates $(t,r,\theta,\phi)$ is the following metric on the group orbits of $T=\partial_t$, $\Phi=\partial_\phi$,
\begin{equation}
  \label{eq:kerr:f}
  f=\sin^2\theta\,\frac{1}{\rho^2}\,\Bigl(a\,\ud t-(r^2+a^2)\,\ud\phi\Bigr)^2-\frac{\Delta_r}{\rho^2}\Bigl(\ud t-a\sin^2\theta\,\ud\phi\Bigr)^2\,,
\end{equation}
and $h$ is the following metric on the orthogonal surfaces:
\begin{equation}
  \label{eq:kerr:h}
  h=\rho^2\Bigl[\frac{1}{\Delta_r}\,\ud r^2+\ud\theta^2\Bigr]\,.
\end{equation}
\end{subequations}
Here, for positive fixed mass $m>0$,
\begin{equation}
  \Delta_r=\bigl(r^2+a^2\bigr)-2mr\,.
\end{equation}
Thus we observe
\begin{equation}
  g\bigr\rvert_{m=0}=g_0 \,, 
\end{equation}
i.e.~by setting the mass $m=0$ in the Kerr solution we obtain Minkowski space  in corotating coordinates.
In short,  we can express the Kerr metrics  as:
\begin{equation}
  g=g_0+\frac{2mr}{\rho^2}\Bigl(\ud t-a\sin^2\theta\,\ud\phi\Bigr)^2+\frac{2mr\rho^2}{\Delta_r\rvert_{m=0}\Delta_r}\,\ud r^2\,.
\end{equation}
We may view the last two terms as perturbation and express them again in the $(t_0,r_0,\theta_0,\phi_0)$-coordinates. Since
\begin{gather}
  \ud t-a\sin^2\theta\ud \phi=\ud t_0-a\sin^2\theta\,\ud\phi_0 \\
  \rho^2\ud r=\Bigl(r_0 (r-r_0)+r^2+a^2\Bigr)\ud r_0-a^2r_0\sin\theta_0\cos\theta\,\ud\theta_0
\end{gather}
we obtain in reference to the Schwarzschild metric,
\begin{equation}
  \label{eq:schwarzschild}
  g_m=-\Bigl(1-\frac{2m}{r_0}\Bigr)\ud t_0^2+\frac{1}{1-\frac{2m}{r_0}}\ud r_0^2+r_0^2\bigl(\ud\theta_0^2+\sin^2\theta_0\ud\phi_0^2\bigr)
\end{equation}
 that for any Kerr metric $g$, as $1/r\to 0$,
\begin{gather}
  \bigl(g-g_m\bigr)_{t_0 t_0}=\frac{2mr}{\rho^2}-\frac{2m}{r_0}=\mathcal{O}\Bigl(\frac{2m}{r_0}\frac{a^2}{r_0^2}\Bigr)\\
  \bigl(g-g_m\bigr)_{t_0 \phi_0}=-\frac{2mr}{\rho^2}a\sin^2\theta=\mathcal{O}\Bigl(\frac{2m}{r_0}a\Bigr)\\
  \bigl(g-g_m\bigr)_{\phi_0\phi_0}=\frac{2mr}{\rho^2}a^2\sin^4\theta=\mathcal{O}\Bigl(\frac{2m}{r_0}a^2\Bigr)\\
  \bigl(g-g_m\bigr)_{r_0r_0}=\frac{2mr}{\rho^2}\frac{1}{\Delta_r}\frac{\bigl(r_0 (r-r_0)+r^2+a^2\bigr)^2}{(r^2+a^2)}-\frac{2m}{r_0}\frac{1}{1-\frac{2m}{r_0}}=\mathcal{O}\Bigl(\frac{2m}{r_0}\frac{a^2}{r_0^2}\Bigr)\label{eq:g:rr}\\
  \bigl(g-g_m\bigr)_{r_0\theta_0}=-2\frac{2mr}{\rho^2}\frac{1}{\Delta_r}a^2 r_0\sin\theta_0\cos\theta\Bigl(\frac{r_0 (r-r_0)}{r^2+a^2}+1\Bigr)=\mathcal{O}\Bigl(\frac{2m}{r_0}\frac{a^2}{r_0}\Bigr)\\
  \bigr(g-g_m\bigr)_{\theta_0\theta_0}=\frac{2mr}{\rho^2}\frac{1}{\Delta_r}a^4\frac{1}{r^2+a^2}r_0^2\sin^2\theta_0\cos^2\theta=\mathcal{O}\Bigl(\frac{2m}{r_0}\frac{a^4}{r_0^2}\Bigr)\,.
\end{gather}
Note that to show \eqref{eq:g:rr} we have used \eqref{eq:transform:r} to infer
\begin{equation}
  r_0-r=\frac{a^2\sin^2\theta}{r_0+r}\,,\qquad r_0(r_0-r)=\mathcal{O}\bigl(a^2\bigr)\,.
\end{equation}

In summary, we have calculated
that with respect to the null coordinates of the underlying Schwarzschild metric $g_m$, $u_0=(t_0-r_0^\ast)/2$, $v_0=(t_0+r_0^\ast)/2$, where $\ud \rs_0/\ud r_0=(1-\frac{2m}{r_0})^{-1}$, the Kerr metrics are of the form
\begin{multline}
  g=\mathcal{O}\bigl(\frac{1}{r_0^3}\bigr)\ud u_0^2-4\Bigl(1-\frac{2m}{r_0}+\mathcal{O}\bigl(\frac{1}{r_0^3}\bigr)\Bigr)\ud u_0\ud v_0+\mathcal{O}\bigl(\frac{1}{r_0^3}\bigr)\ud v_0^2\\
  +\sum_{A=1}^2\mathcal{O}\bigl(\frac{1}{r_0}\bigr)\ud u_0\ud y^A+\sum_{A=1}^2\mathcal{O}\bigl(\frac{1}{r_0}\bigr)\ud v_0\ud y^A+r_0^2\sum_{A,B=1}^2\Bigl(\g_{AB}+\mathcal{O}\bigl(\frac{1}{r_0^3}\bigr)\Bigr)\ud y^A\ud y^B
\end{multline}
Therefore the Kerr metrics $g$ in the coordinates $(u_0,v_0,y^1,y^2)$ take the form \eqref{gen.form}, and fall under the 
assumptions of theorems \ref{thm1.pm}, \ref{O1.pm}.


\begin{thebibliography}{10}

\bibitem{agmon:lower}
S.~Agmon, \emph{Lower bounds for solutions of {Schr\"odinger} equations}, J.
  Analyse Math. \textbf{23} (1970), 1--25.

\bibitem{alex_io_kl:hawking_anal}
S.~Alexakis, A.~Ionescu, and S.~Klainerman, \emph{{Hawking's} local rigidity
  theorem without analyticity}, Geom. Funct. Anal. \textbf{20} (2010), no.~4,
  845--869.

\bibitem{alin:non_unique}
S.~Alinhac, \emph{Non unicit\'e du probl\`eme de {C}auchy}, Ann. Math.
  \textbf{117} (1983), no.~1, 77--108.

\bibitem{aron:uc}
N.~Aronszajn, \emph{A unique continuation theorem for solutions of elliptic
  partial differential equations or inequalities of second order}, J. Math.
  Pures Appl. (9) \textbf{36} (1957), 235--249.

\bibitem{sa-barreto:support}
A.~S{\'a} Barreto, \emph{A support theorem for the radiation fields on
  asymptotically {E}uclidean manifolds}, Math. Res. Lett. \textbf{15} (2008),
  no.~5, 973--991.

\bibitem{baskin-wang:rad}
D.~Baskin and F.~Wang, \emph{Radiation fields on schwarzschild spacetime},
  arXiv:1305.5273 [math.AP], 2013.

\bibitem{bic_scho_tod:time_per_sc:1}
J.~{Bi\v{c}\'ak}, M.~Scholtz, and P.~Tod, \emph{On asymptotically flat
  solutions of {Einstein's} equations periodic in time: {I.} {Vacuum} and
  electrovacuum solutions}, Classical Quant. Grav. \textbf{27} (2010), no.~5,
  055007.

\bibitem{bic_scho_tod:time_per_sc:2}
\bysame, \emph{On asymptotically flat solutions of {Einstein's} equations
  periodic in time: {II.} {Spacetimes} with scalar-field sources}, Classical
  Quant. Grav. \textbf{27} (2010), no.~17, 175011.

\bibitem{biz_wass:boson}
P.~Bizo\'n and A.~Wasserman, \emph{On existence of mini-boson stars}, Comm.
  Math. Phys. \textbf{215} (2000), 357--373.

\bibitem{cald:unique}
A.~P. Calder\'on, \emph{Uniqueness in the {Cauchy} problem for partial
  differential equations}, Amer. J. Math. \textbf{80} (1958), 16--36.

\bibitem{carter:lecture}
B.~Carter, \emph{Black hole equilibrium states}, Black Holes (New York) (B.S.
  DeWitt and C.~DeWitt, eds.), Les Houches Lectures, Gordon and Breach, 1972,
  pp.~59--214.

\bibitem{ch:rome}
D.~Christodoulou, \emph{The global initial value problem in general
  relativity}, Proceedings of the ninth Marcel Grossmann meeting on General
  Relativity, vol. MGIXMM, World Scientific, 2002, pp.~44--53.

\bibitem{PC:non-uniq}
P.~Cohen, \emph{The non-uniqueness of the {C}auchy problem}, ONR Technical
  Report~93, Stanford Univ., 1960.

\bibitem{cord:uc}
H.~O. Cordes, \emph{{\"Uber} die {E}indeutige {B}estimmtheit der {L\"osungen}
  elliptischer {D}ifferentialgleichungen durch {A}nfangsvorgaben}, Nachr. Akad.
  Wiss. {G\"ottingen}. Math.-Phys. Kl. IIa. \textbf{1956} (1956), 239--258.

\bibitem{ekpv:ucSchr}
L.~Escauriaza, C.~E. Kenig, G.~Ponce, and L.~Vega, \emph{Unique continuation
  for {Schr\"odinger} evolutions, with applications to profiles of
  concentration and traveling waves}, Comm. Math. Phys. \textbf{305} (2011),
  no.~2, 487--512.

\bibitem{kenig:ams}
L.~Escauriaza, C.~E. Kenig, G.~Ponce, and L.~Vega, \emph{Uniqueness properties
  of solutions to {S}chr\"odinger equations}, Bull. Amer. Math. Soc. (N.S.)
  \textbf{49} (2012), no.~3, 415--442.

\bibitem{fksy:nonex}
F.~Finster, N.~Kamran, J.~Smoller, and S.~T. Yau, \emph{Nonexistence of
  timeperiodic solutions of the {Dirac} equation in an axisymmetric black hole
  geometry}, Comm. Pure Apl. Math. \textbf{53} (2002), no.~7, 902--929.

\bibitem{helg:radon}
S.~Helgason, \emph{The {R}adon {T}ransform}, {Birkh\"auser}, 1980.

\bibitem{hor:lpdo2}
L.~H{\"o}rmander, \emph{The analysis of linear partial differential operators
  {II:} {Differential} operators with constant coefficients}, Springer-Verlag,
  1985.

\bibitem{hor:lpdo3}
\bysame, \emph{The analysis of linear partial differential operators {III:}
  {Pseudo-differential} operators}, Springer-Verlag, 1985.

\bibitem{hor:lpdo4}
\bysame, \emph{The analysis of linear partial differential operators {IV:}
  {Fourier} integral operators}, Springer-Verlag, 1985.

\bibitem{IJ:pos-eig}
A.~D. Ionescu and D.~Jerison, \emph{On the absence of postive eigenvalues of
  {Schr\"odinger} operators with rough potentials}, Geom. Funt. Anal.
  \textbf{26} (2012), 563--593.

\bibitem{IK:killing}
A.~D. Ionescu and S.~Klainerman, \emph{On the local extension of killing
  vector-fields in ricci flat manifolds}, J. Am. Math. Soc. \textbf{13} (2003),
  1029--1081.

\bibitem{IK:illposed}
\bysame, \emph{Uniqueness results for ill-posed characteristic problems in
  curved space-times}, Comm. Math. Phys. \textbf{285} (2009), no.~3, 873--900.

\bibitem{ik:private}
A.~D. Ionescu and S.~Klainerman, Private communication, 2013.

\bibitem{kato:growth}
T.~Kato, \emph{Growth properties of solutions of the reduced wave equation with
  a variable coefficient}, Comm. Pure Appl. Math. \textbf{12} (1959), 403--425.

\bibitem{krs:carl}
C.~E. Kenig, A.~Ruiz, and C.~D. Sogge, \emph{Uniform {S}obolev inequalities and
  unique continuation for second order constant coefficient differential
  operators}, Duke Math. J. \textbf{55} (1987), no.~2, 329--347.

\bibitem{koch_tat:carl}
H.~Koch and D.~Tataru, \emph{Carleman estimates and absence of embedded
  eigenvalues}, Comm. Math. Phys. \textbf{267} (2006), no.~2, 419–449.

\bibitem{ler_robb:unique}
N.~Lerner and L.~Robbiano, \emph{Unicit\'e de {Cauchy} pour des op\'erateurs de
  type principal par}, J. Anal. Math. \textbf{44} (1984), 32--66.

\bibitem{mesh:inf_decay}
V.~Z. Meshkov, \emph{On the possible rate of decay at infinity of solutions of
  second-order partial differential equations}, Math. USSR Sbornik \textbf{72}
  (1992), 343--361.

\bibitem{mesh:diff_ineq}
\bysame, \emph{Weight differential inequalities and their applications to
  estimates of the decrease order at infinity of the solutions to elliptic
  equations of the second order}, Proc. Steklov Inst. Math. \textbf{70} (1992),
  145--166.

\bibitem{papapetrou:1957}
A.~Papapetrou, \emph{\"{U}ber periodische nichtsingul\"are {L}\"osungen in der
  allgemeinen {R}elativit\"atstheorie}, Ann. Physik (6) \textbf{20} (1957),
  399--411.

\bibitem{papapetrou:1958:1}
\bysame, \emph{\"{U}ber periodische {G}ravitations- und elektromagnetische
  {F}elder in der allgemeinen {R}elativit\"atstheorie}, Ann. Physik (7)
  \textbf{1} (1958), 186--197.

\bibitem{papapetrou:1958:2}
\bysame, \emph{\"{U}ber zeitabh\"angige {L}\"osungen der {F}eldgleichungen der
  allgemeinen {R}elativit\"atstheorie}, Ann. Physik (7) \textbf{2} (1958),
  87--96.

\bibitem{frans:kerr}
F.~Pretorius and W.~Israel, \emph{Quasi-spherical light cones of the {K}err
  geometry}, Classical Quantum Gravity \textbf{15} (1998), no.~8, 2289--2301.

\bibitem{reed-sim:scat}
M.~Reed and B.~Simon, \emph{Methods of modern mathematical physics. {II}.
  {F}ourier analysis, self-adjointness}, Academic Press [Harcourt Brace
  Jovanovich Publishers], New York, 1975.

\bibitem{sachs:waves}
R.~K. Sachs, \emph{Gravitational waves in general relativity. {VIII.} {Waves}
  in asymptotically flat space-time}, Proc. R. Soc. Lond. \textbf{270} (1962),
  103--126.

\bibitem{tataru:ucbig}
D.~Tataru, \emph{Carleman estimates, unique continuation and applications},
  http://math.berkeley.edu/~tataru/papers/ucpnotes.ps.

\bibitem{tataru:ucp}
\bysame, \emph{Unique continuation for {PDE's}}, IMA Vol. Math. Appl.
  \textbf{137} (2003), 239--255.

\end{thebibliography}

\providecommand{\bysame}{\leavevmode\hbox to3em{\hrulefill}\thinspace}
\providecommand{\MR}{\relax\ifhmode\unskip\space\fi MR }
\providecommand{\MRhref}[2]{%
  \href{http://www.ams.org/mathscinet-getitem?mr=#1}{#2}
}
\providecommand{\href}[2]{#2}

\end{document}